\newtheorem{fact}[theorem]{Fact}
\newtheorem{notation}[theorem]{Notation}
\newcommand{\COMMENTED}[1]{}
\newcommand{\mycite}[2]{{\rm\cite[\textsc{#1}]{#2}}}
\newcommand{\IQ}{\mathbb{Q}}
\newcommand{\IN}{\mathbb{N}}
\newcommand{\IA}{\mathbb{A}}
\newcommand{\IB}{\mathbb{B}}
\newcommand{\IP}{\mathbb{P}}
\newcommand{\IR}{\mathbb{R}}
\newcommand{\IF}{\mathbb{F}}
\newcommand{\IS}{\mathbb{S}}
\newcommand{\II}{\mathbb{I}}
\newcommand{\calS}{\mathcal{S}}
\newcommand{\dom}{\mbox{dom}}
\newcommand{\Laplace}{\bigtriangleup}
\newcommand{\Semigroup}{\mathcal{S}}
\newcommand{\onorm}[1]{{\left\vert\kern-0.25ex\left\vert\kern-0.25ex\left\vert #1
    \right\vert\kern-0.25ex\right\vert\kern-0.25ex\right\vert}}
\renewcommand{\onorm}[1]{\left|#1\right|}
\newcommand{\C}{C}
\newcommand{\Cone}{C^1}
\newcommand{\Cinfty}{C^\infty}
\newcommand{\myto}{\!\to\!}
\newcommand{\ELL}[1]{L_{#1}}
\newcommand{\SOB}[2]{H_{#1}^{#2}}
\newcommand{\deltaELL}[1]{\delta_{L_{#1}}}
\newcommand{\deltaSOLZ}[1]{\delta_{L_{#1,0}^{\sigma}}}
\newcommand{\deltaSOB}[1]{\delta_{H_{2,0}^{#1}}}
\newcommand{\SOLZ}[1]{L^{\sigma}_{#1,0}}    
\newcommand{\SOLPZ}{\IQ^{\sigma}_{0}}       
\newcommand{\poly}{p}
\newcommand{\p}{2}
\newcommand{\calP}{\mathcal{P}}
\newcommand{\Pressure}{P}
\newcommand{\Helmholtz}{\mathbb{P}}
\newcommand{\divergence}{\nabla\cdot}
\newcommand{\myrho}{\rho}
\newcommand{\Trim}{\mathbb{T}}
\def\qed{\unskip\nobreak\hfil
\penalty50\hskip1em\null\nobreak\hfil$\sqcup$
\parfillskip=0pt\finalhyphendemerits=0\endgraf}
\begin{document}
\title{Computability of the Solutions to Navier-Stokes Equations via Recursive Approximation\thanks{
 The third author is supported by grant NRF-2017R1E1A1A03071032.}}
\titlerunning{Computability to Navier-Stokes Equations}
%
\author{Shu-Ming Sun\inst{1} \and
Ning Zhong\inst{2} \and Martin Ziegler\inst{3}}
\authorrunning{S. M. Sun, N. Zhong, M. Ziegler}
%
\institute{Department of Mathematics, Virginia Tech, Blacksburg, VA
24061, USA \email{sun@math.vt.edu} \and Department of Mathematical
Sciences, University of Cincinnati, Cincinnati, OH 45221, USA \\
\email{zhongn@ucmail.uc.edu} \and
School of Computing, KAIST, 291 Daehak-ro, 34141 Daejeon, Republic of Korea\\
\email{ziegler@kaist.ac.kr}}
\maketitle              
\bigskip

\centerline{ {\it The paper is dedicated to the memory of Professor Ker-I Ko.}}

\begin{abstract}
As one of the seven open problems in the addendum to their 1989 book
{\it Computability in Analysis and Physics}, Pour-El
and Richards proposed ``... the recursion theoretic
study of particular nonlinear problems of classical importance.
Examples are the Navier-Stokes equation, the KdV equation, and the
complex of problems associated with Feigenbaum's constant.'' In this
paper, we approach the question of whether the Navier-Stokes
Equation admits recursive solutions in the sense of Weihrauch's
Type-2 Theory of Effectivity. A natural encoding
(``representation'') is constructed for the space of divergence-free
vector fields on 2-dimensional open square $\Omega = (-1, 1)^2$.
This representation is shown to render first the mild solution to
the Stokes Dirichlet problem and then a strong local solution to the
nonlinear inhomogeneous incompressible Navier-Stokes initial value
problem uniformly computable. Based on classical approaches, the
proofs make use of many subtle and intricate estimates which are
developed in the paper for establishing the computability results.
\\

\keywords{Navier-Stokes equations  \and Computability.}
\end{abstract}
\section{Introduction}
The (physical) Church-Turing Hypothesis \cite{Soar96} postulates
that every physical phenomenon or effect can, at least in principle,
be simulated by a sufficiently powerful digital computer up to any
desired precision. Its validity had been challenged, though, in the
sound setting of Recursive Analysis: with a computable $\Cone$
initial condition to the Wave Equation leading to an incomputable
solution \cite{PERi81,PEZh86}. The controversy was later resolved by
demonstrating that, in both physically \cite{Zieg09,BCT12} and
mathematically more appropriate Sobolev space settings, the solution
is computable uniformly in the initial data \cite{WeZh02}. Recall
that functions $f$ in a Sobolev space
are not defined pointwise but by local averages in the $L_q$ sense\footnote{%
We use $q\in[1,\infty]$ to denote the norm index, $\Pressure$ for
the pressure field, $\poly$ for polynomials, $\calP$ for a set of
trimmed and mollified tuples of the latter,
and $\Helmholtz$ for the Helmholtz Projection.} (in particular $q=2$
corresponding to energy) with derivatives understood in the
distributional sense. This led to a series of investigations on the
computability of linear and nonlinear partial differential equations
\cite{WeZh05,WeZh06,WeZh07}.

The (incompressible) Navier-Stokes Equation
\begin{equation} \label{e:NSE}
\partial_t\vec u -\Laplace \vec u +(\vec u\cdot\nabla)\vec u
+ \nabla \Pressure = \vec f, \quad \divergence \vec u=0, \quad \vec
u(0)=\vec a, \quad \vec u\big|_{\partial\Omega}\equiv\vec 0
\end{equation}
describes the motion of a viscous incompressible fluid filling a
rigid box $\overline{\Omega}$. The vector field $\vec u=\vec u(\vec
x,t)=\big(u_{1},u_{2},\ldots,u_{d}\big)$ represents the velocity of
the fluid and $\Pressure=\Pressure(\vec x, t)$ is the scalar
pressure with gradient $\nabla P$; $\Laplace$ is the Laplace
operator; $\nabla\cdot\vec u$ denotes componentwise divergence;
$\vec u\cdot\nabla$ means, in Cartesian coordinates,
$u_{1}\partial_{x_1} + u_{2}\partial_{x_2} +\ldots
+u_{d}\partial_{x_d}$; and the function $\vec a=\vec a(\vec x)$ with
$\divergence\vec a=0$ provides the initial velocity and $\vec f$ is
a given external force. Equation~(\ref{e:NSE}) thus constitutes a
system of $d+1$ partial differential equations for $d+1$ functions.

The question of global existence and smoothness of its solutions,
even in the homogeneous case $\vec f\equiv\vec 0$, is one of the
seven Millennium Prize Problems posted by the Clay Mathematics
Institute at the beginning of the 21st century. Local existence has
been established, though, in various $L_q$ settings \cite{GiMi85};
and uniqueness of weak solutions in dimension 2, but not in
dimension 3 \mycite{\S V.1.5}{Sohr}, \mycite{\S
V.1.3.1}{BoyerFabrie}. Nevertheless, numerical solution methods have
been devised in abundance, often based on pointwise (or even
uniform, rather than $L_q$) approximation and struggling with
nonphysical artifacts \cite{PMC85}. In fact, the very last of seven
open problems listed in the addendum to \cite{PERi89} asks for a
``recursion theoretic study of $\dots$ the Navier-Stokes equation''.
Moreover it has been suggested \cite{Smit03} that hydrodynamics
could in principle be incomputable in the sense of allowing to
simulate universal Turing computation and to thus `solve' the
Halting problem. And indeed recent progress towards (a negative
answer to) the Millennium Problem \cite{Tao14} proceeds by
simulating a computational process in the vorticity dynamics to
construct a blowup in finite time for a PDE very similar to
(\ref{e:NSE}).

\subsection{Overview} \label{ss:Overview}
Using the sound framework of Recursive Analysis, we assert the
computability of a local strong solution of (\ref{e:NSE}) in the
space $\SOLZ{2}(\Omega)$ (see Section \ref{s:Represent} for
definition) 
from a given initial condition $\vec
a\in\SOLZ{2}(\Omega)$; moreover, the computation is uniform in the
initial data.
We follow a common strategy used in the classical existence proofs
\cite{Giga83a,Giga83b,GiMi85,Sohr,Tema77}:
\begin{enumerate}
\item[i)]
Eliminate the pressure $\Pressure$ by applying, to both sides of
Equation~(\ref{e:NSE}), the Helmholtz projection
$\Helmholtz:\big(\ELL{\p}(\Omega)\big)^2\to\SOLZ{\p}(\Omega)$, thus
arriving at the non-linear evolution equation
\begin{equation} \label{e:NS-E}
\partial_t \vec u + \IA\vec u + \IB\vec u = \vec g \quad (t>0), \qquad
\vec u(0)= \vec a \in \SOLZ{\p}(\Omega)
\end{equation}
where $\ELL{\p}(\Omega)$ is the set of all square-integrable
real-valued functions defined on $\Omega$, $\vec g=\Helmholtz\vec
f$, $\vec u=\Helmholtz\vec u\in\SOLZ{\p}(\Omega)$,
$\IA=-\Helmholtz\Laplace$ denotes the Stokes operator, and $\IB\vec
u=\Helmholtz\, (\vec u\cdot\nabla)\vec u$ is the nonlinearity.
\item[ii)]
Construct a mild solution $\vec v(t)\vec a = e^{-t\IA}\vec a$ of the
associated homogeneous linear equation
\begin{equation} \label{e:NS-LH}
\partial_t\vec v + \IA \vec v = \vec 0
\quad \text{for }t\geq0, \qquad \vec v(0)=\vec a\in\SOLZ{\p}(\Omega)
\end{equation}
\item[iii)]
 Rewrite (\ref{e:NS-E}) using (ii) in an integral form \cite[\S2]{GiMi85}
\begin{equation} \label{e:integral-form}
\vec u(t) = e^{-t\IA}\vec a \;+\;\int_0^t e^{-(t-s)\IA} \vec
g(s)\,ds \;-\; \int^t_0e^{-(t-s)\IA }\IB\vec u(s)\, ds \quad
\text{for }\; t\geq0
\end{equation}
and solve it by a limit/fixed-point argument using the following
iteration scheme \cite[\textsc{Eq.}~(2.1)]{GiMi85}:
\begin{equation} \label{e:iteration}
\vec v_0(t) = e^{-t\IA}\vec a + \int_0^t e^{-(t-s)\IA} \vec g(s)\,ds
, \
 \vec v_{n+1}(t) = \vec
v_0(t) - \int^t_0e^{-(t-s)\IA }\IB\vec v_n(s)\,ds
\end{equation}
\item[iv)]
 Recover the pressure $\Pressure$ from $\vec u$ by solving
\begin{equation} \label{e:gradient}
 \nabla \Pressure \;=\; \vec f-\partial_t\vec u + \Laplace \vec u - (\vec{u} \cdot \nabla ) \vec{u}
 \end{equation}
\end{enumerate}

To make use of the above strategy for deriving an algorithm to
compute the solution of (\ref{e:NSE}), there are several
difficulties which need to be dealt with. Firstly, a proper
representation is needed for coding the solenoidals. The codes
should be not only rich enough to capture the functional characters
of these vector fields but also robust enough to retain the coded
information under elementary function operations, in particular,
integration. Secondly, since the Stokes operator $\IA: \dom(\IA) \to
L^{\sigma}_{2, 0}(\Omega)$ is neither continuous nor its graph dense
in $(L^{\sigma}_{2, 0}(\Omega))^2$, there is no convenient way to
directly code $\IA$ for computing the solution of the linear
equation (\ref{e:NS-LH}). The lack of computer-accessible
information on $\IA$ makes the computation of the solution
$\vec{v}(t)\vec{a}=e^{-t\IA}\vec{a}$ of (\ref{e:NS-LH}) much more
intricate. Thirdly, since the nonlinear operator $\IB$ in the
iteration (\ref{e:iteration}) involves differentiation and
multiplication, and a mere $L^{\sigma}_{2, 0}$-code of $\vec{v}_n$
is not rich enough for carrying out these operations, it follows
that there is a need for computationally derive a stronger code for
$\vec{v}_n$ from any given $L^{\sigma}_{2, 0}$-code of $\vec{a}$ so
that $\IB \vec{v}_n$ can be computed. This indicates that the
iteration is to move back and forth among different spaces, and thus
additional care must be taken in order to keep the computations
flowing in and out without any glitches from one space to another.
To overcome those difficulties arising in the recursion theoretic
study of the Navier-Stokes equation, many estimates - subtle and
intricate - are established in addition to the classical estimates.
\\

The paper is organized as follows.  Presuming familiarity with
Weihrauch's \emph{Type-2 Theory of Effectivity} \cite{Weih00},
Section~\ref{s:Represent} recalls the standard representation
$\deltaELL{\p}$ of $\ELL{\p}(\Omega)$ and introduces a natural
representation $\deltaSOLZ{\p}$ of $\SOLZ{\p}(\Omega)$.
Section~\ref{s:Helmholtz} proves that the Helmholtz projection
$\Helmholtz:\big(\ELL{2}(\Omega)\big)^2\to\SOLZ{2}(\Omega)$ is
$\big((\deltaELL{2})^2,\deltaSOLZ{2}\big)$-computable.
Section~\ref{s:Stokes} presents the proof that the solution to the
linear homogeneous Dirichlet problem~(\ref{e:NS-LH}) is uniformly
computable from the initial condition $\vec a$.
Section~\ref{s:Nonlinear} is devoted to show that the solution to
the nonlinear Navier-Stokes problem (\ref{e:NSE}) is uniformly
computable locally.
Subsection~\ref{ss:MulDiff} recalls the Bessel (=fractional Sobolev)
space $\SOB{2}{s}(\Omega)\subseteq\ELL{2}(\Omega)$ of $s$-fold
weakly differentiable square-integrable functions on $\Omega$ and
its associated standard representation $\deltaSOB{s}$, $s\geq0$. For
$s>1$ we assert differentiation $\SOB{2}{s}(\Omega)\ni w\mapsto
\partial_x w\in\ELL{2}(\Omega)$ to be
$\big(\deltaSOB{s},\deltaELL{2}\big)$-computable and multiplication
$\SOB{2}{s}(\Omega)\times\ELL{2}(\Omega)\ni (v,w)\mapsto
vw\in\ELL{2}(\Omega)$ to be
$\big(\deltaSOB{s}\times\deltaELL{2},\deltaELL{2}\big)$-computable.
Based on these preparations, Subsection~\ref{ss:iteration} asserts
that in the homogeneous case $\vec g\equiv\vec0$,  the  sequence,
generated from the iteration map
\begin{align*}
\IS\;:\;\C\big([0;\infty),\SOLZ{2}(\Omega)\big)\times\C\big([0;\infty), & \SOLZ{2}(\Omega)\big)
\;\ni\;(\vec v_0,\vec v_n) \\
&
\;\mapsto\;\vec v_{n+1}\;\in\;\C\big([0;\infty),\SOLZ{2}(\Omega)\big)
\end{align*}
according to Equation~(\ref{e:iteration}),
converges effectively uniformly on some positive (but not necessarily
maximal) time interval $[0;T]$ whose length $T=T(\vec a)>0$ is
computable from the initial condition $\vec a$.
Subsection~\ref{ss:iteration0} proves
that the iteration map $\IS$ is
$\big([\myrho\myto\deltaSOLZ{2}]\times[\myrho\myto\deltaSOLZ{2}],[\myrho\myto\deltaSOLZ{2}]\big)$-computable.
We conclude in
Subsection~\ref{ss:final} with the final extensions regarding the
inhomogeneity $\vec f$ and pressure $\Pressure$, thus establishing
the main result of this work:

\begin{theorem} \label{t:Main}
There exists a $\displaystyle
\big(\deltaSOLZ{2}\!\!\times\!\big[\myrho\myto\deltaSOLZ{2}\big]\:,\myrho\big)
\text{-computable map } T$,
\[
T:\SOLZ{2}(\Omega)\times\C\big([0;\infty),\SOLZ{2}(\Omega)\big)\to(0;\infty),
\quad (\vec a, \vec f)\mapsto T(\vec a, \vec f) \] and a
$\big(\deltaSOLZ{2}\!\!\!\times\!\big[\myrho\!\to\!\deltaSOLZ{2}\big]\times\myrho\:,
\deltaSOLZ{2}\big)\text{-computable partial map } \cal S$,
\[
\calS:\subseteq\SOLZ{2}(\Omega)\times\C\big([0;\infty),\SOLZ{2}(\Omega)\big)\times[0;\infty)
\to\SOLZ{2}(\Omega)\!\times\!\ELL{2}(\Omega)
\]
such that, for every $\vec a\in\SOLZ{2}(\Omega)$ and $\vec
f\in\C\big([0;\infty),\SOLZ{2}(\Omega)\big)$, the function $(\vec
u,P):\big[0;T(\vec a,\vec f)]\ni t\mapsto \calS(\vec a,\vec f,t)$
constitutes a (strong local in time and weak global in space)
solution to Equation~(\ref{e:NSE}).
\end{theorem}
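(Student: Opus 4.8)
\medskip
\noindent The plan is to assemble the computability results of the preceding sections along the classical scheme~i)--iv) of Subsection~\ref{ss:Overview}, carrying uniformity in $(\vec a,\vec f)$ through every step. First I would reduce~(\ref{e:NSE}) to the evolution equation~(\ref{e:NS-E}): since $\vec f$ is given valued in the solenoidal space $\SOLZ{2}(\Omega)$, the Helmholtz-projected forcing is simply $\vec g=\Helmholtz\vec f=\vec f$, directly available from a $\big[\myrho\myto\deltaSOLZ{2}\big]$-name, while the Stokes-semigroup result (Section~\ref{s:Stokes}) renders $t\mapsto e^{-t\IA}\vec a$ uniformly computable from $\vec a$; the computability of the Helmholtz projection (Section~\ref{s:Helmholtz}) is then needed inside the nonlinearity $\IB\vec w=\Helmholtz\big((\vec w\cdot\nabla)\vec w\big)$. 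Combining the effective smoothing and mapping properties of $e^{-t\IA}$ with effective integration of the $\SOLZ{2}(\Omega)$-valued integrand handles the Duhamel term $\int_0^t e^{-(t-s)\IA}\vec g(s)\,ds$, so that $\vec v_0$ of~(\ref{e:iteration}) is $\big(\deltaSOLZ{2}\times[\myrho\myto\deltaSOLZ{2}],[\myrho\myto\deltaSOLZ{2}]\big)$-computable from $(\vec a,\vec f)$.

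Next I would run the iteration~(\ref{e:iteration}). By Subsection~\ref{ss:iteration0} the one-step map $\IS$ is $\big([\myrho\myto\deltaSOLZ{2}]\times[\myrho\myto\deltaSOLZ{2}],[\myrho\myto\deltaSOLZ{2}]\big)$-computable, hence $n\mapsto\vec v_n$ is a computable sequence in $\C\big([0;T],\SOLZ{2}(\Omega)\big)$; by Subsection~\ref{ss:iteration} it converges effectively uniformly on $[0;T]$, with both a modulus of convergence and an existence time $T=T(\vec a)>0$ computable from $\vec a$. Since a computable sequence equipped with a computable modulus of convergence has a computable limit, the fixed point $\vec u$ of~(\ref{e:integral-form}) is $\big(\deltaSOLZ{2}\times[\myrho\myto\deltaSOLZ{2}]\times\myrho,\deltaSOLZ{2}\big)$-computable on $[0;T]$. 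Passing from $\vec g\equiv\vec 0$ to general $\vec g$ affects only the drift $\vec v_0$ and the smallness threshold entering the contraction argument; as in Subsection~\ref{ss:final} I would absorb this into a computable $T=T(\vec a,\vec f)$, taking care that the bound controlling the contraction constant depends on $\vec f$ only through quantities (e.g.\ a $\C\big([0;T],\ELL{2}(\Omega)\big)$-norm bound for $\vec g$) that are themselves computable from an $\vec f$-name.

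Finally I would recover the pressure via~(\ref{e:gradient}). The iteration already supplies, for $t\in(0;T]$, extra Bessel regularity $\vec u(t)\in\big(\SOB{2}{s}(\Omega)\big)^2$ with $s>1$ --- indispensable already for evaluating $\IB\vec u$ through the differentiation and multiplication maps of Subsection~\ref{ss:MulDiff} --- together with a strong time derivative $\partial_t\vec u$; thus the right-hand side $\vec f-\partial_t\vec u+\Laplace\vec u-(\vec u\cdot\nabla)\vec u$ of~(\ref{e:gradient}) is a computable element of $\big(\ELL{2}(\Omega)\big)^2$ whose Helmholtz projection vanishes by~(\ref{e:NS-E}), hence it is a gradient, and its scalar potential $P$ is obtained by effective integration, furnishing a $\deltaELL{2}$-name of $P$. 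That the resulting pair $(\vec u,P)$ is a strong-in-time, weak-in-space solution of~(\ref{e:NSE}) on $[0;T(\vec a,\vec f)]$ is the classical content of~\cite{GiMi85}, which I would invoke.

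I expect the main obstacle to be not any isolated estimate but the \emph{effective} bookkeeping of the repeated transitions between $\SOLZ{2}(\Omega)$, the Bessel spaces $\SOB{2}{s}(\Omega)$, and $\ELL{2}(\Omega)$ that the nonlinearity $\IB$ forces: each transition must be realized by a computable operator, and the constants in the smoothing estimates for $e^{-t\IA}$ that make the Duhamel integrals converge up to $s=t$ must be uniform in, and computable from, $(\vec a,\vec f)$, so that the modulus of effective convergence --- and with it the computable existence time $T$ --- can actually be extracted. The pressure step is a secondary difficulty, since it requires $\partial_t\vec u$ to be not merely well defined but computably available.
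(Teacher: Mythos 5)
Your steps (i)--(iii) follow the paper's route exactly: Helmholtz projection (Section~\ref{s:Helmholtz}), the semigroup $e^{-t\IA}$ (Section~\ref{s:Stokes}), the computable one-step iteration map together with effective convergence on a computable interval $[0;T]$ (Propositions~\ref{p:convergence} and~\ref{p:iteration0}), and the same terse appeal to \cite{GiMi85} and \cite{WeZh05} for carrying the inhomogeneity $\vec g=\Helmholtz\vec f$ through the contraction. That part is a faithful reconstruction of Subsections~\ref{ss:iteration}--\ref{ss:final}.

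The pressure step, however, contains a genuine gap. You assert that the iteration ``already supplies \dots{} a strong time derivative $\partial_t\vec u$'' that is \emph{computably available}, and you build the right-hand side of~(\ref{e:gradient}) from it. Nothing in the paper (nor in your own outline) establishes computability of $\partial_t\vec u$; indeed the regularity actually delivered by Lemma~\ref{H-alpha} is only $\vec u(t)\in\SOB{2}{6/5}$, which is well short of the $D(\IA)\subseteq (\SOB{2}{2})^2$ needed even to place $\IA\vec u$ (hence $\partial_t\vec u=-\IA\vec u-\IB\vec u+\vec g$) in $\ELL{2}$, let alone to compute it. The paper sidesteps this entirely: applying $\II-\IP$ to Equation~(\ref{e:NS-E}) shows that the right-hand side of~(\ref{e:gradient}) equals $(\II-\IP)\big[\vec f+\Laplace\vec u-(\vec u\cdot\nabla)\vec u\big]$, so the $\partial_t\vec u$ term is eliminated algebraically before anything is computed, and $P$ is then recovered as a path integral of this conservative field. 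You actually observe the key fact --- that the Helmholtz projection of the right-hand side vanishes --- but you use it only to conclude that the field is a gradient, not to remove the dependence on $\partial_t\vec u$. As written, your argument rests on an unproved (and, at the available regularity, doubtful) computability claim; the fix is to follow the paper and never form $\partial_t\vec u$ at all.
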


Roughly speaking, a function is computable if it can be approximated
by ``computer-accessible" functions (such as rational numbers,
polynomials with rational coefficients, and so forth) with arbitrary
precision, where precision is given as an input; such a sequence of
approximations is called an effective approximation. Thus in terms
of effective approximations, the theorem states that the solution of
Equation (\ref{e:NSE}) can be effectively approximated locally in
the time interval $[0, T(\vec{a}, \vec{f})]$, where the time
instance $T(\vec{a}, \vec{f})$ is effectively approximable. %

More precisely, in computable analysis, a map $F:X\to Y$ from a
space $X$ with representation $\delta_X$ to a space $Y$ with
representation $\delta_Y$ is said to be $(\delta_X,
\delta_Y)$-computable if there exists a (Turing) algorithm (or any
computer program) that computes a $\delta_Y$-name of $F(x)$ from any
given $\delta_X$-name of $x$. A metric space $(X,d)$, equipped with
a partial enumeration $\zeta:\subseteq\IN\to X$ of some dense
subset, gives rise to a canonical Cauchy representation
$\delta_\zeta$ by encoding each $x\in X$ with a sequence $\bar
s=(s_0,s_1,s_2,\ldots)\in\dom(\zeta)^\omega\subseteq\IN^\omega$ such
that $d\big(x,\zeta(s_k)\big)\leq2^{-k}$ for all $k$
\cite[\S8.1]{Weih00}; in other words, $\{ \zeta(s_k)\}$ is an
effective approximation of $x$. For example, approximating by
(dyadic) rationals thus leads to the standard representation
$\myrho$ of $\IR$; and for a fixed bounded $\Omega\subseteq\IR^2$,
the standard representation $\deltaELL{\p}$ of $\ELL{\p}(\Omega)$
encodes $f\in\ELL{\p}(\Omega)$ by a sequence
$\{\poly_k:k\in\IN\}\subseteq\IQ[\IR^2]$ of $d$-variate polynomials
with rational coefficients such that $\|f-\poly_k\|_{\p}\leq2^{-k}$,
where $\| \cdot\|_{\p}=\| \cdot \|_{L_{2}}$. Thus if both spaces $X$
and $Y$ admit Cauchy representations, then a function $f: X\to Y$ is
computable if there is an algorithm that computes an effective
approximation of $f(x)$ on any given effective approximation of $x$
as input. For represented spaces $(X,\delta_X)$ and $(Y,\delta_Y)$,
$\delta_X\!\times\!\delta_Y$ denotes the canonical representation of
the Cartesian product $X\!\times\! Y$. When $X$ and $Y$ are
$\sigma$-compact metric spaces with respective canonical Cauchy
representations $\delta_X$ and $\delta_Y$, $[\delta_X\myto\delta_Y]$
denotes a canonical representation of the space $\C(X,Y)$ of
continuous total functions $f:X\to Y$, equipped with the
compact-open topology
\mycite{Theorem~3.2.11+Definition~3.3.13}{Weih00}. The
representation $[\delta_X\myto\delta_Y]$ supports type conversion in
the following sense \mycite{Theorem~3.3.15}{Weih00}:

\begin{fact} \label{f:conversion}
On the one hand, the evaluation $(f,x)\mapsto f(x)$ is
$([\delta_X\myto\delta_Y]\!\times\!\delta_X\:,\:\delta_Y)$-computable.
On the other hand, a map $f:X\times Y\to Z$ is
$(\delta_X\!\times\!\delta_Y\:,\:\delta_Z)$-computable iff the map
$X\ni x\mapsto \big(y\mapsto f(x,y)\big)\in\C(Y,Z)$ is
$(\delta_X\:,\:[\delta_Y\myto\delta_Z])$-computable. \\
\end{fact}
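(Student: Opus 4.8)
The plan is to reduce both assertions to the two fundamental structural theorems of Type-2 computability --- the \emph{utm}- and \emph{smn}-theorems --- which together express that the category of represented spaces is cartesian closed, with $[\delta_X\myto\delta_Y]$ serving as the exponential object. Recall that, up to the coding conventions of \cite{Weih00}, a name of $f\in\C(X,Y)$ under $[\delta_X\myto\delta_Y]$ is an index $p\in\IN^\omega$ of a continuous \emph{realizer} $F_p:\subseteq\IN^\omega\to\IN^\omega$: a function that maps every $\delta_X$-name of any $x\in X$ to some $\delta_Y$-name of $f(x)$, i.e.\ $\delta_Y\circ F_p=f\circ\delta_X$ on $\dom(\delta_X)$. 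Both claims then follow by unwinding this definition against the universal and parameter properties of the underlying type-2 machine model.

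For the evaluation map the key is the utm-theorem: there is a single computable universal function $U:\subseteq\IN^\omega\times\IN^\omega\to\IN^\omega$ with $U(p,q)=F_p(q)$ whenever $p$ codes a realizer and $q\in\dom(F_p)$. Given a $[\delta_X\myto\delta_Y]$-name $p$ of $f$ and a $\delta_X$-name $q$ of $x$, applying $U$ produces $F_p(q)$, which by the defining property of $p$ is a $\delta_Y$-name of $f(x)$. Since $U$ is computable, evaluation is $([\delta_X\myto\delta_Y]\times\delta_X,\delta_Y)$-computable, which is the first assertion.

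For the type-conversion equivalence I would argue the two directions separately. Assume first that the curried map $x\mapsto(y\mapsto f(x,y))$ is $(\delta_X,[\delta_Y\myto\delta_Z])$-computable via some computable $N:\subseteq\IN^\omega\to\IN^\omega$. Given a $\delta_X\times\delta_Y$-name $\langle p,q\rangle$ of $(x,y)$, I first run $N$ on $p$ to obtain a $[\delta_Y\myto\delta_Z]$-name $N(p)$ of $f(x,\cdot)$, then feed $(N(p),q)$ into the computable evaluation map just constructed; the output is a $\delta_Z$-name of $f(x,\cdot)(y)=f(x,y)$. As a composition of computable maps this witnesses $(\delta_X\times\delta_Y,\delta_Z)$-computability of $f$. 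Conversely, assume $f$ is realized by a computable $M:\subseteq\IN^\omega\to\IN^\omega$ with $\delta_Z\circ M\langle p,q\rangle=f(\delta_X(p),\delta_Y(q))$. Here the smn-theorem supplies a total computable $S:\IN^\omega\to\IN^\omega$ with $F_{S(p)}(q)=M\langle p,q\rangle$ for all $p,q$; that is, $S(p)$ is an index of the realizer $q\mapsto M\langle p,q\rangle$ of the section $f(x,\cdot)$. Thus $p\mapsto S(p)$ is a computable map sending any $\delta_X$-name of $x$ to a $[\delta_Y\myto\delta_Z]$-name of $f(x,\cdot)$, establishing $(\delta_X,[\delta_Y\myto\delta_Z])$-computability of the curried map.

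The main point to verify --- rather than a genuine obstacle --- is that the realizer semantics is preserved under $U$ and $S$ on the correct domains. One must check that for a legitimate name $p$ the machine $U(p,\cdot)$ outputs $\delta_Y$-names on \emph{all} of $\dom(\delta_X)$, not merely on computable inputs; this is precisely where admissibility of the Cauchy representations of the $\sigma$-compact metric spaces $X,Y,Z$ enters, guaranteeing that the continuous realizers $F_p$ range over exactly the $(\delta_X,\delta_Y)$-realizable continuous functions. Symmetrically one checks that $q\mapsto M\langle p,q\rangle$ is again a valid realizer, so that $S(p)$ is a well-formed function name. Once the universal and parameter theorems are invoked relative to oracles --- so as to capture all continuous, not just computable, functions on $\IN^\omega$ --- these domain conditions are routine, and the full statement of Fact~\ref{f:conversion} follows.
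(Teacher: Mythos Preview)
Your argument is correct and is essentially the standard proof via the utm- and smn-theorems. Note, however, that the paper does not actually prove this statement: it is recorded as a \emph{Fact} with an explicit citation to \cite[Theorem~3.3.15]{Weih00}, so there is no in-paper proof to compare against. What you have written is precisely the argument one finds in the cited reference, so in that sense your approach coincides with the intended one.
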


We mention in passing that all spaces considered in this paper are
equipped with a norm. Thus for any space $X$ considered below, a
$\delta_{X}$-name of $f\in X$ is simply an effective approximation
of $f$ despite the often cumbersome notations. \\


\section{Representing Divergence-Free $\ELL{\p}$ Functions on $\Omega$}
\label{s:Represent}

Let us call a vector field $\vec f$ satisfying $\divergence
\vec f=0$ in $\Omega$ 
\emph{divergence-free}. A vector-valued function $\vec\poly$ is
called a polynomial of degree $N$ if each of its components is a
polynomial of degree  less than or equal to $N$ with respect to each
variable and at least one component is a polynomial of degree $N$.
Let $\SOLZ{\p}(\Omega)$ --- or $\SOLZ{\p}$ if the context is clear
--- be the closure in $\ELL{\p}$-norm of the set $\{ \vec u\in
(\Cinfty_0(\Omega))^2 \, : \, \divergence \vec u=0\}$ of all smooth
divergence-free functions with support of $\vec u$ and all of its
partial derivatives contained in some compact subset of  $\Omega$.
Let $\IQ[\IR^2]$ be the set of all polynomials of two real variables
with rational coefficients and
$\SOLPZ[\IR^2]$ the subset of all $2$-tuples of such polynomials
which are divergence-free in $\Omega$ and vanish on $\partial
\Omega$.
We note that the boundary value of a $L^{\sigma}_{2,
0}(\Omega)$-function $\vec{u}$, $\vec{u}|_{\partial
\Omega}$,  is not defined unless $\vec{u}$ is (weakly)
differentiable; if $\vec{u}$ is (weakly) differentiable, then
$\vec{u}|_{\partial \Omega}=0$. \\

\begin{notation} \label{n:1} Hereafter we use $\| w \|_2$ for the $L_2$-norm $\|
w\|_{L_2(\Omega)}$ if $w$ is real-valued, or for $\|
w\|_{(L_2(\Omega))^2}$ if $w$ is vector-valued (in $\mathbb{R}^2$).
We note that $\| \cdot \|_{L^{\sigma}_{2, 0}(\Omega)}= \| \cdot
\|_{(L_2(\Omega))^2}$. For any subset $A$ of $\mathbb{R}^n$, its
closure is denoted as $\overline{A}$.
\end{notation}

\begin{proposition} \label{polynomial}
\begin{enumerate}
\item[a)]
A polynomial tuple
$$
\vec\poly=(\poly_1, \poly_2)=\big(\sum_{i,
j=0}^{N}a^1_{i, j}x^iy^j, \sum_{i, j=0}^{N}a^2_{i, j}x^iy^j\big)
$$
is divergence-free and boundary-free if and only if  its coefficients
satisfy the following system of linear equations with integer
coefficients:
\begin{eqnarray} \label{div}
(i+1)a^1_{i+1, j}+(j+1)a^2_{i, j+1}=0, & 0\leq i, j\leq N-1 \smallskip \nonumber \\
(i+1)a^1_{i+1, N}=0, & 0\leq i\leq N-1 \smallskip \\
(j+1)a^2_{N, j+1}=0, & 0\leq j\leq N-1 \nonumber
\end{eqnarray}
and for all $0\leq i,j\leq N$,
\begin{gather} \label{boundary-1}
\sum\nolimits_{i=0}^N a^1_{i, j}=\sum\nolimits_{i=0}^N a^2_{i, j}=\sum\nolimits_{i=0}^N
(-1)^ia^1_{i, j}=\sum\nolimits_{i=0}^N (-1)^ia^2_{i, j}=0 \\
\sum\nolimits_{j=0}^N a^1_{i, j}=\sum\nolimits_{j=0}^N a^2_{i, j}=\sum\nolimits_{j=0}^N
(-1)^ja^1_{i, j}=\sum\nolimits_{j=0}^N (-1)^ja^2_{i, j}=0  \label{boundary-2}
\end{gather}
\item[b)]
$\SOLPZ[\IR^2]$ is dense in $\SOLZ{\p}(\Omega)$ w.r.t.
$\ELL{\p}$-norm.
\end{enumerate}
\end{proposition}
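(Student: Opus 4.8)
For part~(a), the plan is a direct computation together with the elementary fact that a polynomial which vanishes on a nonempty open set (resp.\ on an interval) has all coefficients zero. Expanding $\divergence\vec\poly=\partial_x\poly_1+\partial_y\poly_2$ and collecting the coefficient of each monomial $x^iy^j$ produces exactly the left-hand sides of system~(\ref{div}) (the two ``boundary'' families of equations in~(\ref{div}) coming from the monomials with $i=N$ or $j=N$, where only one of the two terms survives), so $\divergence\vec\poly\equiv0$ in $\Omega$ is equivalent to~(\ref{div}). For the boundary condition, note $\partial\Omega$ is the union of the four segments $\{\pm1\}\times[-1,1]$ and $[-1,1]\times\{\pm1\}$; restricting $\poly_1$ and $\poly_2$ to each segment yields a univariate polynomial (in $y$, resp.\ in $x$) whose coefficients are precisely the sums displayed in~(\ref{boundary-1})--(\ref{boundary-2}), and such a polynomial vanishes on the whole segment iff all of those coefficients vanish. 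Combining the two equivalences gives~(a).

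For part~(b), recall that $\SOLZ{\p}(\Omega)$ is by definition the $\ELL{\p}$-closure of the smooth compactly supported divergence-free fields, so it suffices to $\ELL{\p}$-approximate an arbitrary $\vec u\in(\Cinfty_0(\Omega))^2$ with $\divergence\vec u=0$ by elements of $\SOLPZ[\IR^2]$. First I would pass to a stream function: set $\psi(x,y):=-\int_{-1}^{x}u_2(t,y)\,dt$, so that $\vec u=(\partial_y\psi,-\partial_x\psi)$; using $\partial_yu_2=-\partial_xu_1$ together with $u_1(-1,y)=u_1(1,y)=0$, one checks that $\psi$ is smooth and vanishes in a neighborhood of $\partial\Omega$, hence $\psi\in\Cinfty_0(\Omega)$. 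Next, fix the cutoff $\phi(x,y):=(1-x^2)^2(1-y^2)^2$, which has integer coefficients and satisfies $\phi=0$ \emph{and} $\nabla\phi=0$ on $\partial\Omega$; consequently, for every polynomial $q$, the tuple $\big(-\partial_y(\phi q),\,\partial_x(\phi q)\big)$ is a polynomial which is divergence-free (being a rotated gradient) and vanishes on $\partial\Omega$ (every term of the product rule carries a factor of $\phi$ or $\nabla\phi$), so it lies in $\SOLPZ[\IR^2]$ whenever $q\in\IQ[\IR^2]$.

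It then remains to choose $q$ with $\big(-\partial_y(\phi q),\partial_x(\phi q)\big)$ close to $\vec u=\big(-\partial_y\psi,\partial_x\psi\big)$ in $\ELL{\p}$. Put $g:=\psi/\phi$ in the interior of $\Omega$ and $g:=0$ wherever $\psi$ vanishes near $\partial\Omega$; since $\psi$ vanishes in a full neighborhood of $\partial\Omega$ while $\phi>0$ in the interior, $g$ is a well-defined function in $\Cinfty(\overline\Omega)$ with $\phi g=\psi$. By the Weierstrass theorem applied to $g$ and its first-order partials --- i.e.\ density of polynomials in $\Cone(\overline\Omega)$ --- and a harmless rational perturbation of coefficients, there are $q_k\in\IQ[\IR^2]$ with $\|q_k-g\|_{\Cone(\overline\Omega)}\to0$. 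Since $\phi$ and $\nabla\phi$ are bounded on $\overline\Omega$, the product rule gives $\big\|\partial_x(\phi q_k)-\partial_x\psi\big\|_\infty,\big\|\partial_y(\phi q_k)-\partial_y\psi\big\|_\infty\to0$, whence $\big\|\big(-\partial_y(\phi q_k),\partial_x(\phi q_k)\big)-\vec u\big\|_{\p}\to0$ as $\Omega$ is bounded; and each approximant lies in $\SOLPZ[\IR^2]$, proving~(b).

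The routine ingredients are the coefficient bookkeeping of~(a) and the $\Cone$-Weierstrass approximation of~(b). The step deserving genuine care is the stream-function construction: verifying that the primitive $\psi$ really has compact support inside the square --- which is exactly where the hypothesis $\divergence\vec u=0$ and the geometry of $\Omega$ are used --- and, in the same vein, selecting a cutoff $\phi$ whose \emph{gradient}, not merely $\phi$ itself, vanishes on $\partial\Omega$, so that the polynomial approximants actually satisfy the homogeneous boundary condition and not just the divergence constraint.
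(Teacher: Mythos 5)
Your part~(a) is the same coefficient-collection argument as the paper's. Part~(b) is correct but takes a genuinely different route. The paper quotes the spectral approximation theorem of Landriani--Vandeven for divergence-free polynomials (which approximates any $\vec w\in\mathcal{N}^s_{\mathrm{div}}\cap H^{1,\sigma}_{2,0}$ in $H^s$ by real divergence-free boundary-free polynomials), and then needs a separate linear-algebra lemma (Gaussian elimination over $\IQ$) to pass from real to rational coefficients, using the fact from part~(a) that the admissible polynomials are cut out by an integer linear system. You instead exploit $d=2$: a compactly supported divergence-free field on the square is a rotated gradient of a stream function $\psi\in\Cinfty_0(\Omega)$, which reduces the problem to scalar $\Cone$-Weierstrass approximation of $g=\psi/\phi$ with the cutoff $\phi=(1-x^2)^2(1-y^2)^2$; rationality of coefficients is then immediate by perturbing the scalar polynomial $q$, and the divergence-free and boundary conditions hold exactly because the approximant is $\bigl(-\partial_y(\phi q),\partial_x(\phi q)\bigr)$ with $\phi=\nabla\phi=0$ on $\partial\Omega$. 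Your argument is more elementary and self-contained (no external spectral estimate, no rational-kernel lemma), but it is tied to the simply connected planar geometry, whereas the paper's route extends to $d=3$, which the paper explicitly keeps in view. Two small points to tidy: you write $\vec u=(\partial_y\psi,-\partial_x\psi)$ at first and $\vec u=(-\partial_y\psi,\partial_x\psi)$ later --- fix the sign consistently (harmless, replace $\psi$ by $-\psi$); and the claim that $\psi$ vanishes near the edge $x=+1$ deserves the explicit observation that $\frac{d}{dy}\int_{-1}^{1}u_2(t,y)\,dt=-\int_{-1}^{1}\partial_x u_1(t,y)\,dt=0$, so $\int_{-1}^{1}u_2(\cdot,y)\,dt$ is constant in $y$ and hence identically zero --- this is exactly where $\divergence\vec u=0$ enters, as you correctly flag.
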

The proof of Proposition \ref{polynomial} is
deferred to Appendix \ref{a:polynomial}. \\

We may be tempted to use $\SOLPZ[\IR^2]$ as a set of names for
coding/approximating the elements in the space $\SOLZ{\p}(\Omega)$.
However, since the closure of $\SOLPZ[\IR^2]$ in $L_2$-norm contains
$\SOLZ{\p}(\Omega)$ as a proper subspace, $\SOLPZ[\IR^2]$ is ``too
big" to be used as a set of codes for representing
$\SOLZ{\p}(\Omega)$; one has to ``trim" polynomials in
$\SOLPZ[\IR^2]$ so that any convergent sequence of ``trimmed"
polynomials converges to a limit in $\SOLZ{\p}(\Omega)$. The
trimming process is shown below.
For each $k\in\IN$ (where $\IN$ is the set of all positive
integers),  let $\Omega_{k}=(-1+2^{-k};1-2^{-k})^2$. And  for each
$\vec\poly =(\poly_1, \poly_2)\in \SOLPZ[\IR^2]$, define $\Trim_k
\vec \poly=(\Trim_k \poly_1, \Trim_k \poly_2)$, where
\begin{equation} \label{e:Trim}
\Trim_k\poly_j(x,y) \; =\; \left\{ \begin{array}{ll}
                  \poly_j(\frac{x}{1-2^{-k}}, \frac{y}{1-2^{-k}}), &
\; -1+2^{-k}\leq x, y\leq 1-2^{-k} \\
                   0, &  \mbox{otherwise}
                     \end{array} \right.
\end{equation}
$j=1, 2$. Then $\Trim_k\poly_{j}$ and $\Trim_k\vec\poly$ have the
following properties:
\begin{enumerate}
\item[a)] $\Trim_k\vec\poly$ has compact
support $\overline{\Omega}_k$ contained in $\Omega$.
\item[b)] $\Trim_k\vec\poly$ is a polynomial with rational coefficients defined in $\Omega_k$.
\item[c)] $\Trim_k\vec\poly$ is continuous on $\overline{\Omega}=[-1, 1]^2$.
\item[d)] 
$\Trim_k\vec\poly=0$ on $\partial\Omega_k$, for $\vec\poly$ vanishes on
the boundary of $\Omega$. Thus $\Trim_k\vec\poly$
vanishes in the exterior region of $\Omega_k$ including its boundary
$\partial\Omega_k$.
\item[e)]  $\Trim_k\vec\poly$ is
divergence-free in $\Omega_k$ following the calculation below: for
$(x, y)\in \Omega_k$, we have $(\frac{x}{1-2^{-k}},
\frac{y}{1-2^{-k}})\in \Omega$ and
\begin{align*}
\frac{\partial \Trim_k\poly_1}{\partial x}(x, y)+ \frac{\partial
\Trim_k\poly_2}{\partial y}(x, y)
& =  \frac{1}{1-2^{-k}}\frac{\partial \poly_1}{\partial x'}(x', y')+
\frac{1}{1-2^{-k}}\frac{\partial \poly_2}{\partial
y'}(x', y') \\
& =  \frac{1}{1-2^{-k}}\left[ \frac{\partial \poly_1}{\partial
x'}(x', y')+\frac{\partial \poly_2}{\partial y'}(x',y')\right] \quad=\; 0
\end{align*}
for $\vec\poly$ is divergence-free in $\Omega$, where
$x'=\frac{x}{1-2^{-k}}$ and $y'=\frac{y}{1-2^{-k}}$.
\end{enumerate}

\medskip\noindent
It follows from the discussion above that every $\Trim_k\vec\poly$
is a divergence-free polynomial of rational coefficients on
$\Omega_k$ that vanishes in $[-1, 1]^2\setminus \Omega_k$ and is
continuous on $[-1, 1]^2$. However, although the functions
$\Trim_k\vec\poly$ are continuous on $[-1, 1]^2$ and differentiable
in $\Omega_k$, they can be non-differentiable along the boundary
$\partial\Omega_k\subseteq\Omega$. To use these functions as names
for coding elements in $\SOLZ{\p}(\Omega)$, it is desirable to
smoothen them along the boundary $\partial \Omega_k$ so that they are
differentiable in the entire $\Omega$. A standard technique for
smoothing a function is to convolute it with a $\Cinfty$ function.
We use this technique to modify functions $\Trim_k\vec\poly$ so that
they become divergence-free and differentiable on the entire region
of $\Omega$. Let
\begin{equation} \label{e:gamma}
\gamma(\vec x) \;:=\; \left\{ \begin{array}{ll}
                                  \gamma_0 \cdot\exp\Big(-\tfrac{1}{1-\|\vec x\|^2}\Big), & \text{if } 1>\|\vec x\|:=\max\{|x_1|, |x_2|\} \\
                                  0, & \text{otherwise} \end{array}
                                  \right.
\end{equation}
where $\gamma_0$ is a constant such that $\int _{\IR^2}\gamma(\vec
x)\,d\vec x=1$ holds. The constant $\gamma_0$ is computable, since
integration on continuous functions is computable
\mycite{\S6.4}{Weih00}. Let $\gamma _{k}(\vec x)=2^{2k}\gamma
(2^{k}\vec x)$. Then, for all $k\in \IN$, $\gamma _{k}$ is a
$\Cinfty$ function having support in the closed square $[-2^{-k},
2^{-k}]^2$ and $\int _{\IR^2}\gamma _{k}(\vec x)\,d\vec x=1$. Recall
that for differentiable functions $f,g:\IR^2\to\IR$ with compact
support, the convolution $f\ast g$ is defined as follows:
\begin{equation} \label{e:convolution}
\big(f\ast g\big)(\vec x)\;=\;
  \int_{\IR^2} f(\vec x-\vec y)\cdot g(\vec y)\,d\vec y
\end{equation}
It is easy to see that for $n\geq k+1$ the support of
$\gamma_n\ast\Trim_k\vec\poly:=(\gamma_n\ast\Trim_k\poly_1,\gamma_n\ast\Trim_k\poly_2)$
is contained in the closed square $[-1+2^{-(k+1)}, 1-2^{-(k+1)}]^2$.
It is also known classically that $\gamma_n\ast\Trim_k\vec\poly$ is
a $\Cinfty$ function. Since $\gamma_n$ is a computable function and
integration on compact domains is computable, the map
$(n,k,\vec\poly)\mapsto \gamma_n\ast\Trim_k\vec\poly$ is computable.
Moreover the following metric is computable:
\begin{equation}
\label{e:Metric}
\big((n,k,\vec\poly),(n',k',\vec\poly')\big)
\;\mapsto\; \left( \int
\big|(\gamma_n\ast\Trim_k\vec\poly)(\vec x)-(\gamma_{n'}\ast\Trim_{k'}\vec\poly')(\vec x)\big|^2 \,d\vec x \right)^{1/2}
\end{equation}


\begin{lemma} \label{l:convolution}
Every function $\gamma_{n}\ast\Trim_k\vec\poly$ is divergence-free
in $\Omega$, where $n,k\in\IN$, $n\geq k$, and $\vec\poly\in
\SOLPZ[\IR^2]$.
\end{lemma}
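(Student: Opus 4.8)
The plan is to deduce the claim from two facts about $\Trim_k\vec\poly$ already established above: it is divergence-free \emph{in the interior} $\Omega_k$ (property~e), and it \emph{vanishes identically} on the inner boundary $\partial\Omega_k$ (property~d). The bridge between these facts and the smooth field $\gamma_n\ast\Trim_k\vec\poly$ is the familiar device of transferring the derivative from the mollifier onto $\Trim_k\vec\poly$ by an integration by parts; the vanishing of $\Trim_k\vec\poly$ on $\partial\Omega_k$ is exactly what is needed to discard the resulting boundary term.

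Concretely, I would fix $\vec x\in\IR^2$ and, using that $\gamma_n\in\Cinfty$ has compact support while $\Trim_k\vec\poly$ is continuous with compact support $\overline{\Omega}_k\subseteq\Omega$, differentiate under the integral sign in~(\ref{e:convolution}) to obtain $\divergence(\gamma_n\ast\Trim_k\vec\poly)(\vec x)=\int_{\Omega_k}\nabla\gamma_n(\vec x-\vec y)\cdot\Trim_k\vec\poly(\vec y)\,d\vec y$, the integral over $\IR^2\setminus\overline{\Omega}_k$ dropping out since $\Trim_k\vec\poly\equiv\vec0$ there. Then, writing $\nabla\gamma_n(\vec x-\vec y)=-\nabla_{\vec y}\big[\gamma_n(\vec x-\vec y)\big]$ and noting that on $\overline{\Omega}_k$ the field $\Trim_k\vec\poly$ agrees with a rescaled polynomial and is therefore $\Cinfty$ (while $\gamma_n$ is $\Cinfty$ everywhere and $\Omega_k$ is a rectangle), I would apply the divergence theorem on $\Omega_k$:
\[
\divergence\big(\gamma_n\ast\Trim_k\vec\poly\big)(\vec x)
\;=\;\int_{\Omega_k}\gamma_n(\vec x-\vec y)\,\divergence\Trim_k\vec\poly(\vec y)\,d\vec y
\;-\;\int_{\partial\Omega_k}\gamma_n(\vec x-\vec y)\,\big(\Trim_k\vec\poly(\vec y)\cdot\vec{\nu}\big)\,dS(\vec y).
\]
The volume integral vanishes by property~e ($\divergence\Trim_k\vec\poly=0$ in $\Omega_k$) and the boundary integral vanishes by property~d ($\Trim_k\vec\poly=\vec0$ on $\partial\Omega_k$), so $\divergence(\gamma_n\ast\Trim_k\vec\poly)\equiv0$ on $\IR^2$, hence in particular in $\Omega$.

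The one point that is genuinely not a formality — and the reason the lemma deserves a proof — is that $\Trim_k\vec\poly$ is \emph{not} divergence-free across the interface $\partial\Omega_k$, nor even differentiable there, so one cannot simply invoke ``a mollification of a divergence-free field is divergence-free.'' The integration by parts above isolates this defect on the curve $\partial\Omega_k$, where it is annihilated by the boundary condition $\Trim_k\vec\poly=\vec0$. Equivalently, one may argue that continuity of $\Trim_k\vec\poly$ together with its piecewise smoothness forces its \emph{distributional} divergence on $\IR^2$ to equal its pointwise (a.e.) divergence, namely $0$, after which $\divergence(\gamma_n\ast\Trim_k\vec\poly)=\gamma_n\ast(\divergence\Trim_k\vec\poly)=0$ because mollification commutes with distributional differentiation. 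I do not expect any serious obstacle beyond correctly handling this boundary term; the remaining manipulations (differentiation under the integral, the divergence theorem on a square) are routine.
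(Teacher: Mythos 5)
Your argument is correct and is essentially the paper's own proof: differentiate under the integral sign, move the derivative from $\gamma_n$ onto $\Trim_k\vec\poly$ by integration by parts over $\Omega_k$, discard the boundary term using $\Trim_k\vec\poly=\vec 0$ on $\partial\Omega_k$ (property d), and conclude from $\divergence\Trim_k\vec\poly=0$ in $\Omega_k$ (property e). The only cosmetic difference is that the paper performs the integration by parts one component and one variable at a time rather than invoking the divergence theorem on the square in one step.
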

\begin{lemma} \label{l:dense}
The set $\calP=\big\{ \gamma_{n}\ast\Trim_k\vec\poly \::\: n,
k\in\IN, n\geq k+1, \vec{\poly}\in \SOLPZ[\IR^2]\big\}$ is dense in
$\SOLZ{\p}(\Omega)$.
\end{lemma}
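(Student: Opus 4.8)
The plan is to establish Lemma~\ref{l:dense} in two stages, using the classical fact that mollification converges in $\ELL{\p}$ together with the density already recorded in Proposition~\ref{polynomial}(b). Concretely, given $\vec u\in\SOLZ{\p}(\Omega)$ and a target precision $\varepsilon>0$, I would first invoke Proposition~\ref{polynomial}(b) to pick $\vec\poly\in\SOLPZ[\IR^2]$ with $\|\vec u-\vec\poly\|_2<\varepsilon/3$. The remaining task is to show that $\vec\poly$ itself can be approximated in $\ELL{\p}$-norm by elements of $\calP$, i.e.\ by functions of the form $\gamma_n\ast\Trim_k\vec\poly$ with $n\ge k+1$.

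For the second stage I would first control the trimming error: since $\Trim_k\poly_j$ agrees with a rescaled copy of $\poly_j$ on $\Omega_k$ and vanishes outside, and since $\poly_j$ is continuous (hence bounded) on $\overline{\Omega}$ and vanishes on $\partial\Omega$, one gets $\Trim_k\vec\poly\to\vec\poly$ pointwise on $\Omega$ with a uniform bound; dominated convergence (or a direct estimate splitting $\Omega$ into $\Omega_k$ and the thin frame $\Omega\setminus\Omega_k$, on which $|\poly_j|$ is small because $\poly_j$ is continuous and zero on $\partial\Omega$, while on $\Omega_k$ the rescaling $x\mapsto x/(1-2^{-k})$ tends to the identity uniformly) yields $\|\vec\poly-\Trim_k\vec\poly\|_2\to0$. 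Fix $k$ large enough that this is below $\varepsilon/3$. Then, for that fixed $k$, the standard property of mollifiers — $\gamma_n\ast h\to h$ in $\ELL{\p}$ as $n\to\infty$ for any $h\in\ELL{\p}(\IR^2)$ with compact support, here $h=\Trim_k\vec\poly$ which is in fact continuous with compact support — gives $\|\Trim_k\vec\poly-\gamma_n\ast\Trim_k\vec\poly\|_2\to0$ as $n\to\infty$. Choosing $n\ge k+1$ large enough makes this term below $\varepsilon/3$, and the triangle inequality closes the argument. Crucially, Lemma~\ref{l:convolution} guarantees each $\gamma_n\ast\Trim_k\vec\poly$ is divergence-free in $\Omega$, and the support discussion preceding the lemma (support inside $[-1+2^{-(k+1)},1-2^{-(k+1)}]^2$ for $n\ge k+1$) together with smoothness shows these functions genuinely lie in $\SOLZ{\p}(\Omega)$, so they are legitimate elements of $\calP\cap\SOLZ{\p}(\Omega)$; hence $\overline{\calP}\supseteq\SOLZ{\p}(\Omega)$, and since $\calP\subseteq\SOLZ{\p}(\Omega)$ by the same reasoning, density follows.

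The main obstacle is the trimming estimate $\|\vec\poly-\Trim_k\vec\poly\|_2\to0$: one must handle the interplay between the domain truncation (the frame $\Omega\setminus\Omega_k$) and the argument rescaling simultaneously. The rescaling alone, $\poly_j(x/(1-2^{-k}))\to\poly_j(x)$, is uniform on compacta and harmless; the truncation alone contributes the integral of $|\poly_j|^2$ over the shrinking frame, which vanishes since the frame has area $O(2^{-k})$ and $\poly_j$ is bounded. The only mild subtlety is that the rescaled polynomial $\poly_j(x/(1-2^{-k}))$ is evaluated at points slightly outside $\Omega_k$'s preimage, but these still lie in $\Omega$ where $\poly_j$ is bounded, so a single uniform bound $M=\sup_{\overline\Omega}|\poly_j|$ dominates the whole integrand and dominated convergence applies directly. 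I would write this out as a short explicit $\varepsilon/2+\varepsilon/2$ split rather than appeal to a convergence theorem, to keep it self-contained. Everything else — density of polynomials, mollifier convergence, and the divergence-free property — is either classical or already proved in the excerpt.
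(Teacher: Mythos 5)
Your proposal is correct and follows essentially the same route as the paper's proof in Appendix C: reduce via Proposition~\ref{polynomial}(b) to approximating a fixed $\vec\poly\in\SOLPZ[\IR^2]$, then bound the trimming error $\|\vec\poly-\Trim_k\vec\poly\|$ and the mollification error $\|\Trim_k\vec\poly-\gamma_n\ast\Trim_k\vec\poly\|$ separately. The only (immaterial) difference is that the paper runs both estimates in the sup-norm on the bounded $\overline\Omega$ — using uniform continuity of $\poly_i$ and its vanishing on $\partial\Omega$ — whereas you work directly in $\ELL{\p}$ with a frame-area argument.
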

See Appendices \ref{a:convolution} and \ref{a:dense} for the proofs.

\bigskip
\noindent From Lemmas~\ref{l:convolution} and \ref{l:dense} it
follows that $\calP$ is a countable set that is dense in
$\SOLZ{\p}(\Omega)$ (in $\ELL{\p}$-norm) and every function in
$\calP$ is $\Cinfty$, divergence-free on $\Omega$, and having a
compact support contained in $\Omega$; in other words, $\calP\subset
\{ \vec u\in \Cinfty_{0}(\Omega)^2 \, : \, \divergence \vec u=0\}$.
Thus, $\SOLZ{\p}(\Omega)=\mbox{the closure of $\calP$ in
$\ELL{\p}$-norm}$. This fact indicates that the set $\calP$ is
qualified to serve as codes for representing $\SOLZ{\p}(\Omega)$.

Since the function $\phi:
\bigcup_{N=0}^{\infty}\IQ^{(N+1)^2}\times
\IQ^{(N+1)^2}\to \{ 0, 1\}$, where
\[ \phi \big((r_{i, j})_{0\leq i, j\leq N}, (s_{i, j})_{0\leq i, j\leq N}\big)=\left\{
\begin{array}{ll}
                               1, & \mbox{if (\ref{div}),
(\ref{boundary-1}), and (\ref{boundary-2}) are satisfied}  \\
& \text{ (with
$r_{i, j}=a^1_{i, j}$ and $s_{i, j}=a^2_{i, j}$)} \\
                               0, & \mbox{otherwise} \end{array}
\right. \] is computable, there is a total computable function on
$\IN$ that enumerates $\SOLPZ[\IR^2]$. Then it follows from the
definition of $\calP$ that there is a total computable function
$\alpha: \IN\to \calP$ that enumerates $\calP$; thus,
in view of the computable equation~(\ref{e:Metric}),
$\big(\SOLZ{\p}(\Omega), (\vec u,\vec v)\mapsto\|\vec u-\vec
v\|_{\p}, \calP, \alpha\big)$ is a computable metric space. Let
$\deltaSOLZ{\p}: \IN^{\omega}\to \SOLZ{\p}$ be the standard Cauchy
representation of $\SOLZ{\p}$; that is, every function $\vec u\in
\SOLZ{\p}(\Omega)$ is encoded by a sequence $\{ \vec
\poly_k:k\in\IN\}\subseteq \calP$, such that $\|\vec u-\vec
\poly_k\|_{\p}\leq 2^{-k}$. The sequence $\{ \vec \poly_k\}_{k\in
\IN}$ is called a $\deltaSOLZ{\p}$-name of $\vec u$, which is an
effective approximation of $\vec u$ (in $L_2$-norm).

\section{Computability of Helmholtz Projection} \label{s:Helmholtz}
In this section, we show that the Helmholtz projection $\Helmholtz$
is computable.

\begin{proposition} \label{projection-P}
The projection $\Helmholtz : \big(\ELL{2}(\Omega)\big)^2
\to \SOLZ{2}(\Omega)$ is
$\big((\deltaELL{2})^2,\deltaSOLZ{2}\big)$-computable.
\end{proposition}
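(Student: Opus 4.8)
The plan is to exploit the classical Helmholtz (Weyl) decomposition $\big(\ELL{2}(\Omega)\big)^2 = \SOLZ{2}(\Omega) \oplus G$, where $G = \{\nabla p : p \in \SOB{2}{1}(\Omega)\}$ is the space of gradient fields, and to render this decomposition computable by working through the associated (Neumann) boundary value problem. Given $\vec f \in \big(\ELL{2}(\Omega)\big)^2$, one has $\Helmholtz \vec f = \vec f - \nabla p$, where $p$ solves the weak Neumann problem $\Laplace p = \divergence \vec f$ in $\Omega$, $\partial_\nu p = \vec f \cdot \nu$ on $\partial\Omega$; equivalently, $p \in \SOB{2}{1}(\Omega)/\IR$ is characterized variationally by $\int_\Omega \nabla p \cdot \nabla \varphi = \int_\Omega \vec f \cdot \nabla \varphi$ for all test functions $\varphi$. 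Because $\Helmholtz$ is an orthogonal projection in the Hilbert space $\big(\ELL{2}(\Omega)\big)^2$, it is a bounded linear operator of norm $1$, hence in particular Lipschitz; the task is therefore to compute it on a dense set of inputs with explicit modulus, and extend by continuity --- a standard pattern in computable analysis.

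First I would fix a convenient computable dense sequence in the domain, e.g.\ pairs of rational-coefficient polynomials (or their tensor products), for which $\divergence \vec f$ and all relevant integrals are exactly computable rationals. Second, for such polynomial data I would solve the variational problem for $p$ by Galerkin projection onto finite-dimensional spaces spanned by rational polynomials: the stiffness matrix $\big(\int_\Omega \nabla e_i \cdot \nabla e_j\big)_{i,j}$ and load vector $\big(\int_\Omega \vec f \cdot \nabla e_i\big)_i$ have computable rational entries, so the Galerkin approximant $p_m$ is computable by linear algebra. Third, I would supply a computable rate of convergence $\|p - p_m\|_{\SOB{2}{1}} \to 0$: this is the crux, since C\'ea's lemma only yields convergence of order equal to the best approximation error, which a priori depends on the (unknown) regularity of $p$. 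Here one uses that on the square $\Omega=(-1,1)^2$ with polynomial right-hand side and polynomial Neumann data, elliptic regularity gives $p \in \SOB{2}{s}(\Omega)$ for every $s$ with computable norm bounds (or even real-analyticity up to the corners can be circumvented by noting $\divergence\vec f$ is a polynomial), so polynomial approximation converges at a computable spectral rate; alternatively, one bounds $\|p-p_m\|_{\SOB{2}{1}}$ a posteriori via the computable residual. Fourth, having a $\delta_{\SOB{2}{1}}$-name (hence a fortiori a $\deltaELL{2}$-name componentwise of the gradient) of $p$, the map $\vec f \mapsto \vec f - \nabla p$ yields a $\deltaELL{2}$-approximation of $\Helmholtz \vec f$; since $\Helmholtz \vec f$ lies in $\SOLZ{2}(\Omega)$ and $\calP$ is dense there, I would re-approximate $\vec f - \nabla p$ by elements of $\calP$ to produce a genuine $\deltaSOLZ{2}$-name --- this is legitimate precisely because we know a priori that $\Helmholtz \vec f \in \SOLZ{2}(\Omega)$, so nearby elements of $\calP$ exist and can be searched for effectively using the computable metric~(\ref{e:Metric}). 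Finally, extension from polynomial inputs to all of $\big(\ELL{2}(\Omega)\big)^2$ is immediate from the norm-$1$ bound: if $\|\vec f - \vec q\|_2 \le 2^{-n}$ then $\|\Helmholtz \vec f - \Helmholtz \vec q\|_2 \le 2^{-n}$.

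The main obstacle, as indicated, is the third step: producing an \emph{effective} (computable, input-dependent but data-uniform) modulus of convergence for the elliptic solve, rather than merely qualitative convergence. On the convex polygon $\Omega$ the solution $p$ has $\SOB{2}{2}$-regularity with norm controlled by $\|\divergence\vec f\|_2$ and the Neumann data, which already gives an explicit $O(h)$ or $O(1/m)$ rate for the Galerkin method; pushing to higher-order polynomial data one gets correspondingly faster computable rates, and this is enough. I expect the remaining steps --- computability of the polynomial enumeration and of all integrals, the linear-algebra solve, and the density re-approximation into $\calP$ --- to be routine given the machinery already set up in Section~\ref{s:Represent}, in particular the computable metric on $\calP$ and the fact that $\calP$ is a computable dense subset of $\SOLZ{2}(\Omega)$.
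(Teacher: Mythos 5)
Your proposal is correct in outline but takes a genuinely different route from the paper. The paper never touches the gradient part of the decomposition: it uses the two\mbox{-}dimensional stream-function identity $\Helmholtz\vec u=(-\partial_y\varphi,\partial_x\varphi)$, where $\varphi$ solves the \emph{Dirichlet} problem $\Laplace\varphi=\partial_x u_2-\partial_y u_1$, $\varphi|_{\partial\Omega}=0$. On the square this has an explicit Fourier sine-series solution whose coefficients are the algebraic expressions $\frac{-nu_{2,n,m}+m\tilde u_{1,n,m}}{(n^2+m^2)\pi}$ in the (computable) Fourier coefficients of $\vec u$, so the only effectivity issue is truncating the tail; that is handled by the computability of $\|u_i\|_2^2=\sum|u_{i,n,m}|^2$ together with the uniform bound $\bigl|\tfrac{mn}{n^2+m^2}\bigr|,\bigl|\tfrac{m^2}{n^2+m^2}\bigr|\le 1$, and the final step is the same density search in $\calP$ that you describe. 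Your route via $\Helmholtz\vec f=\vec f-\nabla p$ and the weak Neumann problem is the textbook, dimension-independent definition, but it forces you to certify an effective convergence rate for an elliptic solve, which the paper's explicit series avoids entirely. Two caveats on your third step: the claim that $p\in\SOB{2}{s}(\Omega)$ for every $s$ is not true in general --- the Neumann problem on a square generically exhibits corner singularities (logarithmic terms of type $r^2\log r$) even for polynomial data, so you cannot count on spectral rates; and the a posteriori residual alternative is not obviously effective, since by orthogonality $\|\nabla p-\nabla p_m\|_2^2=\|\vec f-\nabla p_m\|_2^2-\|\Helmholtz\vec f\|_2^2$ involves exactly the quantity you are trying to compute. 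Your stated fallback --- $\SOB{2}{2}$ regularity on the convex polygon with norm controlled by the data, hence an $O(1/m)$ C\'ea/best-approximation rate --- is sound and suffices, provided the regularity constant is itself certified computable (the paper makes analogous constant-computability claims elsewhere without proof, so this is in keeping with its style). In exchange for that heavier classical input your argument would generalize to $d\ge 3$ and to other domains, whereas the paper's argument is tied to $d=2$ and the rectangle.
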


\begin{proof}
For simplicity let us set $\Omega = (0, 1)^2$. The proof carries
over to $\Omega=(-1, 1)^2$ by a scaling on sine and cosine
functions. We begin with two classical facts which are used in the
proof:
\begin{itemize}
\item[(i)] It follows from
\cite[pp.40]{GiRa86}/\cite{Tema77} that for any $\vec u=\big(u_{1},
u_{2}\big)\in \big(\ELL{2}(\Omega)\big)^2$,
\begin{equation} \label{P-u}
\Helmholtz\vec u=
(-\partial_{y}\varphi,
\partial_{x}\varphi)
\end{equation}
where the scalar function $\varphi$ is the solution of the following
boundary value problem:
\begin{equation} \label{varphi}
\Laplace \varphi = \partial_{x}u_{2}-\partial_{y}u_{1} \text{ in }
\Omega, \qquad \varphi = 0 \text{ on } \partial \Omega
\end{equation}
We note that $\Helmholtz$ is a linear operator.
\item[(ii)] Each of $\{ \sin (n\pi x)\sin (m\pi y)\}_{n, m\geq 1}$,
$$\{ \sin (n\pi x)\cos (m\pi y)\}_{n\geq 1, m\geq 0}, \ \mbox{or}\
\{ \cos (n\pi x) \sin (m\pi y)\}_{n\geq 0, m\geq 1}$$ is an
orthogonal basis for $L^2(\Omega)$. Thus each $\vec u=\big(u_{1},
u_{2}\big)$ in $\big(\ELL{2}(\Omega)\big)^2$, $u_{i}$, $i=1$ or $2$,
can be written in the following form:
\begin{align*}
u_{i}(x, y)\;= & \;\sum\limits _{n, m\geq 0}u_{i, n, m}\cos(n\pi
x)\,\sin(m\pi y) \\
 \;= & \; \sum\limits_{n, m\geq 0}\tilde{u}_{i, n,
m}\sin(n\pi x)\,\cos(m\pi y)
\end{align*}
where
\[ u_{i, n,
m}=\int^1_0\int^1_0u_{i}(x, y)\cos(n\pi x)\sin(m\pi y)dxdy, \quad
\mbox{and}
\]
\[ \tilde{u}_{i, n, m}=\int^1_0\int^1_0u_{i}(x, y)\sin (n\pi x)\cos
(m\pi y)dxdy \enspace  \]
\end{itemize}
with the property that $\| u_{i}\|_{2} = \left( \sum_{n, m\geq
0}|u_{i, n, m}|^2\right)^{1/2} = \left( \sum_{n, m\geq
0}|\tilde{u}_{i, n, m}|^2\right)^{1/2}$. We note that the sequences
$\{ u_{i, n, m}\}$,  $\{ \tilde{u}_{i, n, m}\}$, and $\|
u_{i}\|_{2}$ are computable from $\vec{u}$; cf. \cite{Weih00}.

To prove that the projection is $\big((\deltaELL{2})^2,
\deltaSOLZ{2}\big)$-computable, it suffices to show that there is an
algorithm computing, given any accuracy $k\in\mathbb{N}$ and for any
$\vec{u}\in (L^2(\Omega))^2$, a vector function $(p_k, q_k) \in
\mathcal{P}$ such that $\| \Helmholtz\vec u - (p_k, q_k)\|_{2}\leq
2^{-k}$. Let us fix  $k$ and $\vec u=\big(u_{1}, u_{2})$. Then a
straightforward computation shows that the solution $\varphi$ of
(\ref{varphi}) can be explicitly written as
\[
\varphi(x, y)=\sum\nolimits_{n, m=1}^{\infty}\frac{-nu_{2, n,
m}+m\tilde{u}_{1, n, m}}{(n^2+m^2)\pi}\sin(n\pi x)\,\sin(m\pi y)
\]
It then follows that
\begin{equation} \label{partial-y-varphi}
-\partial_{y}\varphi = \sum\nolimits_{n, m\geq 1}\frac{mnu_{2, n,
m}-m^2\tilde{u}_{1, n, m}}{n^2+m^2}\sin(n\pi x)\,\cos(m\pi y)
\end{equation}
Similarly, we can obtain a formula for $\partial_{x}\varphi$. 
Since we have an explicit expression for
$(-\partial_{y}\varphi, \partial_{x}\varphi)$, a search algorithm is
usually a preferred choice for finding a $k$-approximation $(p_k,
q_k)$ of $\mathbb{P}\vec{u}$ by successively computing the norms $$\|
(-\partial_{y}\varphi,
\partial_{x}\varphi) - (p, q)\|_2, \qquad (p, q)\in \mathcal{P}.$$
However, since $-\partial_{y}\varphi$ and $\partial_{x}\varphi$ are
infinite series which involve limit processes, a truncating
algorithm is needed so that one can compute approximations of the
two limits before a search program can be executed. The truncating
algorithm will find some $N(k, \vec{u})\in \mathbb{N}$ such that the
$N(k, \vec{u})$-partial sum of $(-\partial_{y}\varphi,
\partial_{x}\varphi)$ is a $2^{-(k+1)}$-approximation of the series;
in other words, the algorithm chops off the infinite tails of the
series within pre-assigned accuracy. The following estimate provides
a basis for the desired truncating algorithm:
\begin{eqnarray*}
& & \| -\partial_{y}\varphi - \sum_{n, m<N}\frac{mnu_{2, n,
m}-m^2\tilde{u}_{1, n, m}}{n^2+m^2}\sin (n\pi
x)\cos (m\pi y)\|^2_{2} \\
& = & \| \sum_{n, m \geq N}\frac{mnu_{2, n, m}-m^2\tilde{u}_{1, n,
m}}{n^2+m^2}\sin (n\pi
x)\cos (m\pi y)\|^2_{2} \\
& = & \sum_{n, m\geq N}\left| \frac{mnu_{2, n, m}-m^2\tilde{u}_{1,
n, m}}{n^2+m^2}\right|^2 \leq 2\sum_{n, m\geq N}(|u_{2, n, m}|^2 +
|\tilde{u}_{1, n, m}|^2) \\
\end{eqnarray*}
A similar estimate applies to $\partial_{x}\varphi$. Since
$$\|
u_{i}\|^{2}_{2}=\sum_{n, m\geq 1}|u_{i, n, m}|^2 = \sum_{n, m\geq
1}|\tilde{u}_{i, n, m}|^2\, ,\quad  i=1, 2,
$$
is computable, it follows that
there is an algorithm computing $N(k, \vec{u})$ from $k$ and
$\vec{u}$ such that the $N(k, \vec{u})$-partial sum of
$(-\partial_{y}\varphi, \partial_{x}\varphi)$ is a
$2^{-(k+1)}$-approximation of the series. Now we can search for
$(p_k, q_k)\in \mathbb{P}$ that approximates  the $N(k,
\vec{u})$-partial sum in $L^2$-norm within the accuracy $2^{-(k+1)}$
as follows: enumerate $\mathcal{P}=\{ \tilde{p}_j\}$, compute the
$L^2$-norm of the difference between the $N(k, \vec{u})$-partial sum
and $\tilde{p}_j$, halt the computation at $\tilde{p}_j$ when the
$L^2$-norm is less that $2^{-(k+1)}$, and then set $(p_k,
q_k)=\tilde{p}_j$. We note that each computation halts in finitely
many steps.  The search will succeed since
$\mathbb{P}\vec{u}=(-\partial_{y}\varphi,
\partial _{x}\varphi)\in L^{\sigma}_{2, 0}$ and $\mathcal{P}$ is dense in $L^{\sigma}_{2,
0}$. It is then clear that $\| \mathbb{P}\vec{u} - (p_k, q_k)\|_2
\leq 2^{-k}$.
\end{proof}

\section{Computability of the linear problem} \label{s:Stokes}

In this section, we show that the solution operator for the linear
homogeneous equation (\ref{e:NS-LH}) is uniformly computable from
the initial data. We begin by recalling the Stokes operator and some
of its classical properties. Let $\IA =-\Helmholtz \Laplace$ be the
Stokes operator as defined for instance in \cite[\S2]{Giga83a} or
\mycite{\S III.2.1}{Sohr}, where $\Helmholtz :
\big(\ELL{\p}(\Omega)\big)^2 \to \SOLZ{\p}$ is the Helmholtz
projection. It is known from the classical study that $\IA $ is an
unbounded but closed positively self-adjoint linear operator whose domain is dense
in $\SOLZ{\p}$, and thus $-\IA $ is the infinitesimal generator of
an analytic semigroup; cf. \mycite{Theorem~III.2.1.1}{Sohr} or
\cite[\S IV.5.2]{BoyerFabrie}. In this case, the linear homogeneous
equation (\ref{e:NS-LH}) has the solution $\vec u(t)=e^{-\IA t}\vec
a$, where $\vec u(0)=\vec a$, $e^{-\IA t}$ is the analytic semigroup
generated by the infinitesimal generator $-\IA $, and $\vec{u}(t)\in
L^{\sigma}_{2, 0}(\Omega)$ for $t\geq 0$. Furthermore, the following
lemma shows that the solution $\vec{u}(t)$ decays in $L^2$-norm as
time $t$ increases.

\begin{lemma} \label{l:linear-estimate}
For every $\vec{a}\in L^{\sigma}_{2, 0}(\Omega)$ and $t\geq 0$,
\begin{equation} \label{C-0-I}
\| \vec{u}(t)\|_2 = \| e^{-t\IA}\vec a\|_{2}\leq \|\vec a\|_{2}=\|
\vec{u}(0)\|_2
\end{equation}
(Recall that $\| \cdot \|_{2}=\| \cdot \|_{\SOLZ{2}(\Omega)}$; see
Notation \ref{n:1}.)
\end{lemma}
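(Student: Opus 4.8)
The plan is to deduce (\ref{C-0-I}) from the fact, already recalled above, that $\IA$ is a nonnegative self-adjoint operator on the Hilbert space $\SOLZ{2}(\Omega)$: such an operator generates a strongly continuous \emph{contraction} semigroup, and (\ref{C-0-I}) is nothing but the contraction property $\|e^{-t\IA}\|\le 1$ applied to $\vec a$, together with the identifications $\vec u(t)=e^{-t\IA}\vec a$ and $\vec u(0)=\vec a$.

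Concretely, I would argue by the spectral theorem. Since $\IA$ is self-adjoint with $\Spec(\IA)\subseteq[0,\infty)$, it has a spectral resolution $\IA=\int_{0}^{\infty}\lambda\,dE_\lambda$, and the semigroup is given by $e^{-t\IA}=\int_{0}^{\infty}e^{-t\lambda}\,dE_\lambda$. Hence, for every $\vec a\in\SOLZ{2}(\Omega)$,
\[
\|e^{-t\IA}\vec a\|_2^{\,2}\;=\;\int_{0}^{\infty}e^{-2t\lambda}\,d\big\langle E_\lambda\vec a,\vec a\big\rangle\;\le\;\int_{0}^{\infty}d\big\langle E_\lambda\vec a,\vec a\big\rangle\;=\;\|\vec a\|_2^{\,2},
\]
because $0\le e^{-2t\lambda}\le 1$ whenever $\lambda\ge 0$ and $t\ge 0$; taking square roots gives the middle inequality of (\ref{C-0-I}), and the two outer equalities are immediate.

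Alternatively, one can run the classical energy estimate: for $\vec a$ in the dense domain $\dom(\IA)$ the orbit $t\mapsto\vec u(t)$ is differentiable into $\dom(\IA)$ with $\partial_t\vec u=-\IA\vec u$, so $\tfrac{d}{dt}\|\vec u(t)\|_2^2=-2\langle\IA\vec u(t),\vec u(t)\rangle\le 0$ by nonnegativity of $\IA$ (which, unwinding $\IA=-\Helmholtz\Laplace$ on divergence-free fields vanishing on $\partial\Omega$, is just $\langle\IA\vec u,\vec u\rangle=\|\nabla\vec u\|_2^2\ge 0$ after integration by parts); integrating in $t$ yields $\|\vec u(t)\|_2\le\|\vec u(0)\|_2$. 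The only point that needs a word of care is the extension from $\vec a\in\dom(\IA)$ to arbitrary $\vec a\in\SOLZ{2}(\Omega)$, which follows from the density of $\dom(\IA)$ in $\SOLZ{2}(\Omega)$ and the a priori boundedness of each $e^{-t\IA}$ — both among the classical properties of the Stokes operator cited above — so no genuinely new estimate is required. I expect no real obstacle here; the lemma is a preparatory energy bound.
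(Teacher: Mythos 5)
Your proposal is correct, and your ``alternative'' argument is in fact exactly the paper's proof: for $\vec a$ in the dense domain of $\IA$ one computes $\tfrac{1}{2}\tfrac{d}{dt}\langle\vec u,\vec u\rangle=-\langle\IA\vec u,\vec u\rangle\leq 0$ using self-adjointness and nonnegativity of $\IA$, integrates to get $\|\vec u(t)\|_2\leq\|\vec u(0)\|_2$, and then extends to all of $\SOLZ{2}(\Omega)$ by density. (Your observation that nonnegativity of $\langle\IA\vec u,\vec u\rangle$ suffices is a small improvement in precision over the paper, which asserts strict positivity for $\vec a\not\equiv 0$ --- a claim that is not needed and is slightly delicate to justify.) Your primary argument via the spectral resolution $e^{-t\IA}=\int_0^\infty e^{-t\lambda}\,dE_\lambda$ is a genuinely different, and if anything cleaner, route: it delivers the bound $\|e^{-t\IA}\vec a\|_2\leq\|\vec a\|_2$ directly for \emph{every} $\vec a\in\SOLZ{2}(\Omega)$ in one stroke, with no need to first restrict to $\dom(\IA)$, differentiate the orbit, or invoke a separate density-plus-boundedness step at the end. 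What the energy-estimate route buys in exchange is that it uses only the elementary facts already recorded about $\IA$ (closed, positively self-adjoint, dense domain, generator of an analytic semigroup) and does not appeal to the full spectral theorem; both are standard and either would serve the paper's purposes. The one point to keep explicit in the density extension --- that each $e^{-t\IA}$ is a bounded operator so the inequality passes to limits --- you do flag, and it is covered by the semigroup properties cited in Section~\ref{s:Stokes}.
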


\begin{proof} Classically it is known that for any $\vec{a}\in L^{\sigma}_{2,
0}(\Omega)$, $\vec{u}(t)=e^{-t\IA}\vec a$ is in the domain of $\IA$
for $t>0$. Thus if $\vec a=\vec{u}(0)$ itself is in the domain of
$\IA$, then so is $\vec{u}(t)$ for $t\geq 0$. Since $\IA$ is
positively self-adjoint, it follows that $\IA ^{\ast}=\IA $ and
$\langle\IA\vec u(t), \vec u(t)\rangle:=\int_{\Omega} \IA\vec
u(t)(\vec x)\cdot\vec u(t)(\vec x)\,d\vec x > 0$ for every $\vec a$
in the domain of $\IA$ with $\vec a\not\equiv 0$ and $t\geq 0$. Now
if we rewrite the equation (\ref{e:NS-LH}) in the form of
\[ \langle \vec u_t, \vec u \rangle + \langle \IA\vec u, \vec u \rangle=0 \]
or equivalently $\frac{1}{2}\frac{d}{dt}\langle \vec u, \vec
u\rangle + \langle \IA\vec u, \vec u \rangle=0$, then
$\frac{d}{dt}\langle \vec u, \vec u \rangle \leq 0$ and consequently
$\langle \vec u, \vec u \rangle (t)\leq \langle \vec u, \vec u
\rangle (0)$; thus (\ref{C-0-I}) holds true for $\vec{a}$ in the
domain of $\IA$. Since the domain of $\IA$ is dense in
$L^{\sigma}_{2, 0}(\Omega)$, it follows that (\ref{C-0-I}) holds
true for all $\vec{a}\in L^{\sigma}_{2, 0}(\Omega)$.
\qed\end{proof}

\begin{proposition} \label{p:linear-NS}
For the linear homogenous equation~(\ref{e:NS-LH}), the solution
operator $\Semigroup:\SOLZ{2}(\Omega)\to\C\big([0;\infty),
\SOLZ{2}(\Omega)\big)$, $\vec a\mapsto (t\mapsto e^{-\IA t}\vec a)$,
is $(\deltaSOLZ{2}, [\myrho \to \deltaSOLZ{2}])$-computable.
\end{proposition}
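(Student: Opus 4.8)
The plan is to exploit the spectral decomposition of the Stokes operator $\IA$ on $\SOLZ{2}(\Omega)$, exactly as the Helmholtz projection proof exploited the Fourier sine/cosine bases. Since $\IA$ is positively self-adjoint with compact resolvent, there is an orthonormal basis $\{\vec\phi_j\}_{j\in\IN}$ of $\SOLZ{2}(\Omega)$ consisting of eigenfunctions $\IA\vec\phi_j=\lambda_j\vec\phi_j$ with $0<\lambda_1\le\lambda_2\le\cdots\to\infty$, and the semigroup acts by $e^{-t\IA}\vec a=\sum_j e^{-t\lambda_j}\langle\vec a,\vec\phi_j\rangle\,\vec\phi_j$. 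For a given $\deltaSOLZ{2}$-name of $\vec a$ and a rational $t\ge0$, a partial sum $\sum_{j\le N}e^{-t\lambda_j}\langle\vec a,\vec\phi_j\rangle\vec\phi_j$ approximates $e^{-t\IA}\vec a$; by Parseval the tail is $\sum_{j>N}e^{-2t\lambda_j}|\langle\vec a,\vec\phi_j\rangle|^2\le\sum_{j>N}|\langle\vec a,\vec\phi_j\rangle|^2$, which is effectively small once $N$ is large (using that $\|\vec a\|_2^2=\sum_j|\langle\vec a,\vec\phi_j\rangle|^2$ is computable from the name). Uniformity in $t$ on compact intervals is automatic because the bound $e^{-2t\lambda_j}\le 1$ is $t$-independent, and continuity in $t$ follows from Lemma~\ref{l:linear-estimate} (contractivity) applied to differences; so by Fact~\ref{f:conversion} (type conversion) it suffices to compute $(\vec a,t)\mapsto e^{-t\IA}\vec a$ as a map into $\SOLZ{2}(\Omega)$.

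The concrete steps I would carry out: (1)~record the classical spectral facts about $\IA$ on the square --- self-adjointness, discreteness of the spectrum, and in particular an \emph{explicit} orthonormal basis of eigenfunctions expressible through sines/cosines, analogous to the stream-function representation used in Proposition~\ref{projection-P}; the eigenvalues $\lambda_j$ and eigenfunctions $\vec\phi_j$ must be computable (the $\vec\phi_j$ lie in or can be approximated within $\calP$, and the $\lambda_j$ are computable reals, each of finite multiplicity enumerable by solving a transcendental equation). (2)~Show the coefficient map $\vec a\mapsto(\langle\vec a,\vec\phi_j\rangle)_j$ is computable from a $\deltaSOLZ{2}$-name, together with the computable number $\|\vec a\|_2$. (3)~Give the truncation estimate above to produce, from $k$, $t$, and the name of $\vec a$, an index $N=N(k,t,\vec a)$ making the tail smaller than $2^{-(k+1)}$ in $L_2$. (4)~Compute the finite sum $\sum_{j\le N}e^{-t\lambda_j}\langle\vec a,\vec\phi_j\rangle\vec\phi_j$ to within $2^{-(k+1)}$ --- each term involves the computable reals $e^{-t\lambda_j}$, $\langle\vec a,\vec\phi_j\rangle$ and the element $\vec\phi_j\in\SOLZ{2}(\Omega)$, and finite linear combinations with real coefficients of elements of $\SOLZ{2}(\Omega)$ are computable, then round into $\calP$ by a search using the computable metric from Equation~(\ref{e:Metric}). (5)~Invoke type conversion (Fact~\ref{f:conversion}) to pass from the computable two-argument map to the curried map $\vec a\mapsto(t\mapsto e^{-t\IA}\vec a)\in\C([0;\infty),\SOLZ{2}(\Omega))$, checking that the modulus of continuity in $t$ is computable --- which follows because $\|e^{-t\IA}\vec a-e^{-t'\IA}\vec a\|_2\le\|e^{-|t-t'|\IA}\vec a-\vec a\|_2$ and the latter is controlled uniformly using finitely many eigenmodes plus the tail bound.

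The main obstacle I expect is establishing the computability of the spectral data of the Stokes operator on $\Omega$ with the needed uniformity: unlike the scalar Dirichlet Laplacian, the Stokes eigenfunctions on a square do not have a fully elementary closed form, and one must argue that the eigenvalues (roots of a transcendental determinant condition) are computable reals, that their multiplicities can be handled, and that the associated eigenfunctions can be effectively approximated in $\SOLZ{2}(\Omega)$ by elements of $\calP$. A clean workaround --- and the route I would actually pursue if the explicit basis proves awkward --- is to avoid the eigenbasis entirely and instead approximate $e^{-t\IA}$ via the resolvent/Yosida route or the Dunford contour integral $e^{-t\IA}=\frac{1}{2\pi i}\int_\Gamma e^{-t z}(z+\IA)^{-1}\,dz$, using that $(z+\IA)^{-1}$ is computable as a bounded operator on $\SOLZ{2}(\Omega)$ from its definition via solving a Stokes resolvent problem (which reduces, through the Helmholtz projection of Proposition~\ref{projection-P}, to solving scalar elliptic problems whose solution operators are computable as in Section~\ref{s:Helmholtz}); the contour integral is then a computable $\SOLZ{2}(\Omega)$-valued integral of a computable integrand, and the decay $\|e^{-t\IA}\|\le1$ from Lemma~\ref{l:linear-estimate} gives the uniform-in-$t$ error control needed for the truncation of the contour and for type conversion. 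Either way, the remaining arithmetic --- truncation bounds, rounding into $\calP$, assembling the modulus of continuity --- is routine given the machinery already set up in Sections~\ref{s:Represent} and~\ref{s:Helmholtz}.
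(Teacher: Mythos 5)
Your fallback route --- the Dunford contour integral $e^{-t\IA}\vec a=\frac{1}{2\pi i}\int_\Gamma e^{\lambda t}(\lambda\II+\IA)^{-1}\vec a\,d\lambda$ --- is in fact the route the paper takes, so you have the right skeleton; but both of your routes leave the same load-bearing step unproved, namely how to get computational access to the resolvent (or the eigensystem) of $\IA$ when, as the paper stresses in its introduction, the Stokes operator admits no convenient coding. In your primary route you would need computable eigenvalues \emph{and} effectively approximable eigenfunctions of the Stokes--Dirichlet problem on the square; these have no elementary closed form, and effective spectral decomposition of the compact self-adjoint resolvent runs into the usual difficulties with clustering and multiplicities --- you flag this obstacle, but flagging it is not a workaround. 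In your fallback route, the claim that $(\lambda\II+\IA)^{-1}$ is computable because the Stokes resolvent problem ``reduces, through the Helmholtz projection, to scalar elliptic problems'' is the gap: the equation $(\lambda\II-\IP\Laplace)\vec u=\vec a$ with Dirichlet boundary conditions does not decouple into scalar Dirichlet problems, since $\IP$ and $\Laplace$ do not commute on $\Omega$ and the pressure does not drop out of the resolvent equation the way it did in Proposition~\ref{projection-P}.

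The paper's resolution is to exploit the representation $\calP$ \emph{before} invoking the contour integral. For $\vec a\in\calP$ (smooth, compactly supported, divergence-free) one has $\IP\Laplace\vec a=\Laplace\vec a$, and the paper shows that the resolvent applied to such an $\vec a$ is explicitly diagonal in the sine basis, with coefficients $a_{n,m}/(\lambda+(n\pi)^2+(m\pi)^2)$, and moreover that it stays divergence-free (this is verified, not assumed). The extra smoothness of elements of $\calP$ supplies the weighted coefficient bound $\sum_{n,m}(1+n^2+m^2)|a_{n,m}|^2<\infty$ with computable sum, which is exactly what makes the truncation of the two finite contour pieces effective; the unbounded piece of the contour is controlled by the decay of $e^{\lambda t}$ for $t>0$. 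Only then does one pass to general $\vec a\in\SOLZ{2}(\Omega)$ by density together with the contraction bound of Lemma~\ref{l:linear-estimate}, and handle the endpoint $t\to0$ via $\|e^{-t\IA}\vec a_k-\vec a_k\|_2\le Ct^{1/2}\|\IA^{1/2}\vec a_k\|_2$ for name elements $\vec a_k\in\calP$ --- a quantitative estimate that your modulus-of-continuity sketch also needs but does not supply. If you repair your fallback by inserting this ``first $\calP$, then density'' order and the explicit diagonalization of the resolvent on $\calP$, you essentially recover the paper's proof.
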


By the \emph{First Main Theorem} of Pour-El and Richards \cite[\S II.3]{PERi89},
the unbounded operator $\IA$ does not preserve computability.
In particular, the naive exponential series $\sum_n (-\IA t)^n \vec a/n!$
does not establish Proposition~\ref{p:linear-NS}.
\bigskip

\noindent \textbf{Convention.} For readability we will not notationally
distinguish the spaces of vectors $\vec u,\vec a$ and scalar functions $u,a$
in the proof below and the proof of Lemma ~\ref{H-alpha}. \\

\begin{proof}  We show how to
compute a $\deltaSOLZ{2}$-name of $e^{-t\IA}a$ on inputs $t\geq 0$
and  $a\in \SOLZ{2}(\Omega)$. Recall that a $\deltaSOLZ{2}$-name of
$e^{-t\IA}a$ is a sequence $\{ q_K\}$, $q_K\in \mathcal{P}$,
satisfying $\|e^{-t\IA}a - q_K\|_{2}\leq 2^{-K}$ for all
$K\in\mathbb{N}$. Again, for readability, we assume that $\Omega=(0,
1)^2$.

We first consider the case where $a\in \mathcal{P}$ and $t>0$. The
reason for us to start with functions in $\mathcal{P}$ is that these
functions have stronger convergence property in the sense that, for
any $a\in \mathcal{P}$, if  $a=(a^1, a^2)$ is expressed in terms of
the orthogonal basis $\{ \sin(n\pi x)\sin(m\pi y)\}_{n, m\geq 1}$
for $\ELL{2}(\Omega)$: for $i=1, 2$,
\begin{equation} \label{Fourier-series}
a^i=\sum_{n, m\geq 1}a^i_{n, m}\sin(n\pi x)\sin(m\pi y)
\end{equation}
where $a^i_{n, m}=\int_0^1\int_0^1 a^i\sin(n\pi x)\sin(m\pi
y)\,dx\,dy$, then the following series is convergent
\begin{equation} \label{H^1-norm}
\sum_{n, m\geq 1}(1+n^2+m^2) |a^i_{n, m}|^2 < \infty
\end{equation}
The inequality (\ref{H^1-norm}) holds true because functions in
$\mathcal{P}$ are $\Cinfty$. In fact, the series is not only
convergent but its sum is also computable (from $a$) (see, for
example, \cite{Zhon99}).

Now let $K\in \mathbb{N}$ be any given precision. Since $-\IA$
generates an analytic semigroup, it follows from \mycite{Section
2.5}{Pazy83} that for $t>0$,
\begin{equation} \label{e:Pazy}
e^{-t\IA}a=\frac{1}{2\pi i}\int_{\Gamma}e^{\lambda t}
(\lambda\II+\IA)^{-1}a\,d\lambda
\end{equation}
where $\Gamma$ is the path composed from two rays $re^{i\beta}$ and
$re^{-i\beta}$ with $0<r<\infty$ and $\beta=\frac{3\pi}{5}$. Thus we
have an explicit expression for $e^{-t\IA}a$, which involves a limit
process -- an infinite integral -- indicating that a search
algorithm is applicable  for finding a desirable $K$-approximation
provided that a finite approximation of $e^{-t\IA}a$ can be computed
by some truncating algorithm.

In the following, we construct such a truncating algorithm. We begin
by writing the infinite integral in (\ref{e:Pazy}) as a sum of three
integrals: two are finite and one infinite; the infinite one can be
made arbitrarily small. Now for the details. Let $l$ be a positive
integer to be determined; let $\Gamma_1$ be the path $re^{i\beta}$
with $0<r\leq l$; $\Gamma_2$ the path $re^{-i\beta}$ with $0<r\leq
l$; and $\Gamma_3=\Gamma \setminus (\Gamma_1\cup\Gamma_2)$. Since
$a\in \mathcal{P}$, it follows that $-\IA a=\IP \Laplace a=\Laplace
a$, which further implies that
\begin{eqnarray} \label{resolvent-a}
& (\lambda\II+\IA)^{-1} a =  \nonumber \\
& \left(\sum_{n, m\geq 1}\frac{a^1_{ n, m}\sin(n\pi x)\sin(m\pi
y)}{\lambda + (n\pi)^2+(m\pi)^2}, \sum_{n, m\geq 1}\frac{a^2_{ n,
m}\sin(n\pi x)\sin(m\pi y)}{\lambda + (n\pi)^2+(m\pi)^2}\right)
\end{eqnarray}
Note that for any $\lambda\in \Gamma$, $|\lambda +
(n\pi)^2+(m\pi)^2|\neq 0$. From (\ref{e:Pazy}) and
(\ref{resolvent-a}) we can write $e^{-t\IA}a$ as a sum of three
terms:
\begin{eqnarray*}
e^{-t\IA}a  & = & \sum_{j=1}^{3}\frac{1}{2\pi
i}\int_{\Gamma_j}\tilde{a}e^{\lambda t}d\lambda
\\
& = & \sum_{j=1}^3\frac{1}{2\pi i}\sum_{n, m\geq 1}\left[
\int_{\Gamma_j}\frac{e^{\lambda t}}{\lambda +
(n\pi)^2+(m\pi)^2}d\lambda \right] a_{n, m}\sin (n\pi x)\sin(m\pi
y) \\
& =: & \beta_1+\beta_2+\beta_3 \enspace
\end{eqnarray*}
where $\tilde{a}=(\lambda\II+\IA)^{-1} a$. The functions $\beta_j$,
$j=1, 2, 3$, are in $L^{\sigma}_{2,0}(\Omega)$ as verified as
follows: It follows from $a=(\lambda\II+\IA)\tilde{a} =
(\lambda\II-\IP\bigtriangleup)\tilde{a}$ and
$\IP\bigtriangleup\tilde{a}=\IP (\bigtriangleup \tilde{a})\in
\SOLZ{2}(\Omega)$ that $\bigtriangledown
(\IP\bigtriangleup\tilde{a})=0$ and
\begin{equation} \label{tilde-a}
0=\bigtriangledown a=\lambda (\bigtriangledown \tilde{a}) -
\bigtriangledown (\IP\bigtriangleup \tilde{a})=\lambda
(\bigtriangledown \tilde{a})
\end{equation}
Since $\lambda \in \Gamma$, it follows that $\lambda \neq 0$; thus
$\bigtriangledown \tilde{a}=0$. This shows that $\tilde{a}\in
\SOLZ{2}(\Omega)$. Then it follows from  (\ref{tilde-a}) that
\[ \bigtriangledown\beta_j = \frac{1}{2\pi
i}\int_{\Gamma_j}(\bigtriangledown \tilde{a})e^{\lambda t}d\lambda =
0\] Hence $\beta_j\in \SOLZ{2}(\Omega)$ for $1\leq j\leq 3$.

Next we show that $\beta_1$ and $\beta_2$ can be effectively
approximated by finite sums while $\beta_3$ tend to zero effectively
as $l\to \infty$. We start with $\beta_3$. Since $t>0$ and
$\cos\beta =\cos\frac{3\pi}{5}<0$, it follows that
\[ \left| \int_{\Gamma_3}\frac{e^{\lambda
t}}{\lambda+(n\pi)^2+(m\pi)^2}d\lambda\right| \leq
2\int_l^{\infty}\frac{e^{tr\cos \beta}}{r}dr \to 0
\] effectively as $l\to \infty$.
Thus there is some $l_K\in \IN$, computable from $a$ and $t$, such
that the following estimate is valid for $i=1, 2$ when we take $l$
to be $l_K$:
\begin{eqnarray*}
& & \| \beta^i_3\|_{2} \\
&=& \left\|\frac{1}{2\pi i}\sum_{n, m\geq 1}\left[
\int_{\Gamma_3}\frac{e^{\lambda t}}{\lambda +
(n\pi)^2+(m\pi)^2}d\lambda \right] a^i_{n, m}\sin (n\pi
x)\sin(m\pi y)\right\|_{2} \\
& \leq & \frac{1}{\pi}\int^{\infty}_{l_K}\frac{e^{tr\cos
\beta}}{r}dr \left(\sum_{n, m\geq 1}|a^i_{n, m}|^2\right)^{1/2} =
\frac{1}{\pi}\int^{\infty}_{l_K}\frac{e^{tr\cos \beta}}{r}dr \cdot
\| a\|_{2} \leq 2^{-(K+7)} \enspace
\end{eqnarray*}
where $\beta_3 = (\beta^1_3, \beta^2_3)$. Now let us set $l=l_K$ and
estimate $\beta_1$. Since $\beta =\frac{3\pi}{5}<\frac{3\pi}{4}$, it
follows that $\cos \beta <0$ and $|\cos\beta|<\sin \beta$.
Consequently, for any $\lambda =re^{i\beta}$ on $\Gamma_1$, if
$r\geq \frac{1}{\sin\beta}$, then
$|re^{i\beta}+(n\pi)^2+(m\pi)^2|\geq r\sin \beta\geq 1$. On the
other hand, if $0<r<\frac{1}{\sin\beta}$, then $r\cos\beta
+(n\pi)^2+(m\pi)^2 \geq \pi^2(n^2+m^2)-r\sin\beta \geq 2\pi^2 - 1
>1$, which implies that $|re^{i\beta}+(n\pi)^2+(m\pi)^2|\geq
|r\cos\beta +(n\pi)^2+(m\pi)^2|>1$. Thus $|\lambda
+(n\pi)^2+(m\pi)^2|\geq 1$ for every $\lambda \in \Gamma_1$. And so
\begin{multline*}
\bigg | \int_{\Gamma_1}  \frac{e^{\lambda
t}}{\lambda+(n\pi)^2+(m\pi)^2}d\lambda\bigg |   =  \left|
\int_0^l\frac{e^{tre^{i\beta}}}{re^{i\beta}+(n\pi)^2+(m\pi)^2}d(re^{i\beta})\right|
\;\leq \\
 \leq \;
\int_0^l\frac{|e^{tre^{i\beta}}|}{|re^{i\beta}+(n\pi)^2+(m\pi)^2|}dr
\leq  \int_0^l e^{tr\cos \beta}dr \leq  \int_0^le^{tl}dr = le^{tl}
\end{multline*}
This estimate together with (\ref{H^1-norm}) implies that there
exists a positive integer $k=k(t, a, K)$, computable from $t>0$, $a$
and $K$, such that
\[ \frac{1}{1+2k^2}\left(\frac{le^{lt}}{2\pi}\right)^2\left(
\sum_{n, m\geq 1}(1+n^2+m^2)(|a^1_{n, m}|^2+|a^2_{n, m}|^2)\right) <
2^{-2(K+7)}
\]
Write $\beta_1(k)=(\beta^1_1(k), \beta^2_1(k))$ with
\[ \beta^i_1(k)=\sum_{1\leq n, m\leq k}\left( \frac{1}{2\pi
i}\int_{\Gamma_1}\frac{e^{\lambda t}}{\lambda + (n\pi)^2 +
(m\pi)^2}d\lambda\right)a^i_{n, m}\sin(n\pi x)\sin(m\pi y),
\]
$i=1, 2$. Then
\begin{eqnarray*}
& & \| \beta_1 - \beta_1(k)\|^2_{2} \\
& \leq & \sum_{n, m>k}\left| \frac{1}{2\pi
i}\int_{\Gamma_1}\frac{e^{\lambda t}}{\lambda +
(n\pi)^2+(m\pi)^2}d\lambda\right|^2 (|a^1_{n, m}|^2+|a^2_{n, m}|^2) \\
& \leq & \sum_{n,m >k}\frac{1}{1+n^2+m^2}\cdot
(1+n^2+m^2)\left(\frac{le^{lt}}{2\pi}\right)^2(|a^1_{n,
m}|^2+|a^2_{n, m}|^2) \\
& \leq & \frac{1}{1+k^2+k^2}\left(\frac{le^{lt}}{2\pi}\right)^2
\sum_{n,m \geq 1}(1+n^2+m^2)(|a^1_{n,
m}|^2+|a^2_{n, m}|^2) \\
& < & 2^{-2(K+7)}
\end{eqnarray*}
Similarly, if we write $\beta_2(k)=(\beta^1_2(k), \beta^2_2(k))$
with
\[ \beta^i_2(k)=\sum_{n, m\leq k}\left( \frac{1}{2\pi
i}\int_{\Gamma_2}\frac{e^{\lambda t}}{\lambda + (n\pi)^2 +
(m\pi)^2}d\lambda\right)a^i_{n, m}\sin(n\pi x)\sin(m\pi y) \]
then
$\| \beta_2 - \beta_2(k)\|_{2}\leq 2^{-(K+7)}$. The construction of
the truncating algorithm is now complete; the algorithm outputs
$\beta_1(k) + \beta_2(k)$ (uniformly) on the inputs $a\in
\mathcal{P}$, $t>0$, and precision $K$; the output has the property
that it is  a finite sum involving a finite integral and
$\|\beta_1(k)+\beta_2(k)-e^{-t\IA}a\|_{2}\leq 2^{-(K+4)}$.

Now we are able to search for a desirable approximation in
$\mathcal{P}$. Let us list $\mathcal{P}=\{ \phi_j \, : \,
j\in\mathbb{N}\}$ and compute $\| \phi_j -
(\beta_1(k)+\beta_2(k))\|_{2}$. Halt the computation at $j=j(K)$
when
\[ \| \phi_j
- (\beta_1(k)+\beta_2(k))\|_{2} < 2^{-(K+4)}
\] The computation will halt since $\beta_1, \beta_2\in
\SOLZ{2}(\Omega)$, $\| \beta_1 - \beta_1(k)\|_{2}\leq 2^{-(K+7)}$,
$\| \beta_2 - \beta_2(k)\|_{2}\leq 2^{-(K+7)}$, and $\mathcal{P}$ is
dense in $\SOLZ{2}(\Omega)$ (in $L^2$-norm). Set $q_K=\phi_{j(K)}$.
Then
\begin{eqnarray*}
& & \| q_K - e^{-t\IA}a\|_{2} \\
& = & \| q_K
-(\beta_1+\beta_2+\beta_3)\|_{2} \\
& \leq & \| q_K - (\beta_1(k)+\beta_2(k))\|_{2} +  \|
(\beta_1(k)+\beta_2(k)) - (\beta_1+\beta_2)\|_{2}
+ \|\beta_3\|_{2} \\
& < & 2^{-K}
\end{eqnarray*}

Next we consider the case where $a\in L^{\sigma}_{2, 0}(\Omega)$ and
$t>0$. In this case, the input $a$ is presented by (any) one of its
$\delta_{L^{\sigma}_{2, 0}}$-names, say $\{ a_k\}$, where $a_k\in
\mathcal{P}$. It is then clear from the estimate (\ref{C-0-I}) and
the discussion above that there is an algorithm that computes a
$K$-approximation $p_K\in \mathcal{P}$ on inputs $t>0$, $a$ and
precision $K$ such that $\| p_K - e^{-t\IA}a\|_{2}\leq 2^{-K}$.

Finally we consider the case where $t\geq 0$ and $a\in
L^{\sigma}_{2, 0}(\Omega)$. Since $e^{-t\IA}a=a$ for $t=0$ and we
already derived an algorithm for computing $e^{-t\IA}a$ for $t>0$,
it suffices to show that $e^{-t\IA}a \to a$ in $L^2$-norm
effectively as $t\to 0$. Let $\{ a_k\}$ be a
$\delta_{L^{\sigma}_{2,0}}$-name of $a$. It follows from Theorem
6.13 of Section 2.6 [Paz83] that $\| e^{-t\IA}a_k - a_k\|\leq
Ct^{1/2}\| \IA^{1/2}a_k\|$. Thus \[ \| a - e^{-t\IA}a\| \leq \| a -
a_k\| + \| a_k - e^{-t\IA}a_k\| + \| e^{-t\IA}a_k - e^{-t\IA}a\| \]
the right-hand side goes to 0 effectively as $t \to 0$.
\qed\end{proof}

We note that the computation of the approximations $q_K$ of
$e^{-t\IA}a$ does not require encoding $\IA$. Let $W:
\SOLZ{2}(\Omega)\times [0, \infty) \to \SOLZ{2}(\Omega)$, $(a,
t)\mapsto e^{-t\IA}a$. Then it follows from the previous Proposition
and Fact \ref{f:conversion} that $W$ is computable.

\section{Extension to the nonlinear problem} \label{s:Nonlinear}
We now proceed to the nonlinear problem (\ref{e:NS-E}) by solving
its integral version (\ref{e:integral-form}) via the iteration
scheme (\ref{e:iteration}) but first restrict to the homogeneous
case $\vec g\equiv\vec0$:

\begin{equation} \label{e:iteration0}
\vec u_0(t)\;=\;e^{-t\IA}\vec a , \qquad \vec u_{m+1}(t)\;=\;\vec
u_0(t)\;-\;\int^t_0e^{-(t-s)\IA }\IB\vec u_m(s)\,ds
\end{equation}
Classically, it is known that for every initial condition $\vec
a\in\SOLZ{2}(\Omega)$ the sequence $\vec u_m=\vec u_m(t)$ converges
near $t=0$ to a unique limit $\vec u$ solving
(\ref{e:integral-form}) and thus (\ref{e:NS-E}). Since there is no
explicit formula for the solution $\vec{u}$, the truncation/search
type of algorithms such as those used in the proofs of Propositions
\ref{projection-P} and \ref{p:linear-NS} is no longer applicable for
the nonlinear case. Instead, we use a method based on the
fixed-point argument to establish the computability of $\vec u$. We
shall show that the limit of the above sequence $\vec u_m=\vec
u_m(t)$ has an effective approximation. The proof consists of two
parts: first we study the rate of convergence and show that the
sequence converges at a computable rate as $m\to \infty$ for
$t\in[0;T]$ with some $T=T_{\vec a}>0$, where $T_{\vec a}$ is
computable from $\vec a$; then we show that the sequence -- as one
entity -- can be effectively approximated starting with the given
$\vec a$. The precise statements of the two tasks are given in the
following two propositions.

\begin{proposition} \label{p:convergence} There is a computable map
$\mathbb{T}:\SOLZ{2}(\Omega)\to (0, \infty)$, $\vec{a}\mapsto
T_{\vec a}$, such that the sequence $\{ {\vec u}_m\}$ converges
effectively in $m$ and uniformly for $t\in[0;T_{\vec a}]$.
\end{proposition}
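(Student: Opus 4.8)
\medskip
\noindent\textbf{Approach.}
I would recast the limit of (\ref{e:iteration0}) as the fixed point of $\Phi(\vec u)=\vec u_0-\mathcal{G}(\vec u,\vec u)$, where $\vec u_0(t)=e^{-t\IA}\vec a\in\C([0;\infty),\SOLZ{2}(\Omega))$ is computable from $\vec a$ by Proposition~\ref{p:linear-NS}, and, for $\SOLZ{2}(\Omega)$-valued $\vec u,\vec v$,
\[
\mathcal{G}(\vec u,\vec v)(t)\;:=\;\int_0^t e^{-(t-\tau)\IA}\,\Helmholtz\bigl[\divergence(\vec u\otimes\vec v)\bigr](\tau)\,d\tau ,
\]
so that, since $\divergence\vec u_m=0$, the iteration (\ref{e:iteration0}) reads $\vec u_{m+1}=\vec u_0-\mathcal{G}(\vec u_m,\vec u_m)$. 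The workspace is the (two-dimensionally scaling-critical) Banach space
\[
Y_T:=\Bigl\{\vec u\in\C\bigl((0;T],\SOLZ{2}(\Omega)\bigr):\ \|\vec u\|_{Y_T}:=\sup_{0<t\leq T}\|\vec u(t)\|_2+[\vec u]_T<\infty\Bigr\},\quad [\vec u]_T:=\sup_{0<t\leq T}t^{1/4}\|\vec u(t)\|_{\ELL{4}(\Omega)} .
\]
Everything rests on three estimates, each with an \emph{explicit} constant drawn from the classical analytic-semigroup bounds $\|\IA^{\theta}e^{-t\IA}\|\leq C_\theta t^{-\theta}$, the boundedness of $\IA^{-1/2}\Helmholtz\,\divergence$ on $\ELL{2}(\Omega)$, the two-dimensional embedding $\dom(\IA^{1/4})\hookrightarrow\ELL{4}(\Omega)$, and Lemma~\ref{l:linear-estimate}: (i) $\|\vec u_0\|_{Y_T}\leq C_1\|\vec a\|_2$ with $C_1$ absolute; (ii) $\|\mathcal{G}(\vec u,\vec v)\|_{Y_T}\leq C_2\,[\vec u]_T\,[\vec v]_T$ with $C_2$ absolute and, crucially, \emph{independent of $T$} (the time integrals $\int_0^t(t-\tau)^{-1/2}\tau^{-1/2}\,d\tau=\pi$ and $t^{1/4}\!\int_0^t(t-\tau)^{-3/4}\tau^{-1/2}\,d\tau=B(\tfrac14,\tfrac12)$ are scale-invariant); and (iii) $[\vec u_0]_T\to0$ as $T\to0$. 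Granting (iii), pick $T$ so small that $[\vec u_0]_T\leq\theta_0:=1/(8C_2)$; then $\Phi$ maps the ball $B=\{\vec u\in Y_T:[\vec u]_T\leq 2[\vec u_0]_T,\ \|\vec u\|_{Y_T}\leq R\}$ (with $R$ computable from $\|\vec a\|_2$) into itself and contracts it with ratio $\leq 4C_2[\vec u_0]_T\leq\tfrac12$, so $\{\vec u_m\}$ converges in $Y_T$ with $\|\vec u_{m+1}-\vec u_m\|_{Y_T}\leq 2^{-m}\|\vec u_1-\vec u_0\|_{Y_T}\leq 2^{-m}C_2[\vec u_0]_T^{\,2}$.

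\medskip
\noindent\textbf{Computing $T_{\vec a}$.}
Estimate (iii) is classically proved by density --- for $\vec w\in\calP$ one has $t^{1/4}\|e^{-t\IA}\vec w\|_{\ELL{4}}\leq C\,t^{1/4}\|\IA^{1/4}e^{-t\IA}\vec w\|_2\leq C\,t^{1/4}\|\IA^{1/4}\vec w\|_2\to0$, while the uniform smoothing bound $t^{1/4}\|\IA^{1/4}e^{-t\IA}(\vec a-\vec w)\|_2\leq C\|\vec a-\vec w\|_2$ handles the remainder --- and the point is to make the rate \emph{effective}. A $\deltaSOLZ{2}$-name of $\vec a$ supplies, for each $k$, a field $\vec\poly_k\in\calP$ with $\|\vec a-\vec\poly_k\|_2\leq 2^{-k}$, and every member of $\calP$ is an \emph{explicit} $\Cinfty$ field of compact support in $\Omega$; hence $\|\vec\poly_k\|_2$ and $\|\nabla\vec\poly_k\|_2$ are computable from the name, and the spectral inequality $\|\IA^{1/4}\vec\poly_k\|_2^2=\langle\IA^{1/2}\vec\poly_k,\vec\poly_k\rangle\leq\|\vec\poly_k\|_2\,\|\IA^{1/2}\vec\poly_k\|_2$ together with $\|\IA^{1/2}\vec\poly_k\|_2^2=\langle\IA\vec\poly_k,\vec\poly_k\rangle=\|\nabla\vec\poly_k\|_2^2$ yields the computable bound
\[
[\vec u_0]_T\;=\;\sup_{0<t\leq T}t^{1/4}\|e^{-t\IA}\vec a\|_{\ELL{4}}\;\leq\;C\,2^{-k}\;+\;C\,T^{1/4}\,\bigl(\|\vec\poly_k\|_2\,\|\nabla\vec\poly_k\|_2\bigr)^{1/2}\qquad(k\in\IN).
\]
Running over a computable null sequence of candidate times, and for each of them first choosing $k$ so that the first summand is $\leq\theta_0/2$ and then testing whether the (computable) second summand is $\leq\theta_0/2$, produces from $\vec a$ a time $T_{\vec a}>0$ with $[\vec u_0]_{T_{\vec a}}\leq\theta_0$; set $\Trim(\vec a):=T_{\vec a}$. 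Since $\|\vec a\|_2$ is computable from the name, the ball radius $R$, the contraction ratio, and the bound $C_2[\vec u_0]_{T_{\vec a}}^{\,2}\leq 1/(64C_2)$ on $\|\vec u_1-\vec u_0\|_{Y_{T_{\vec a}}}$ are all computable.

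\medskip
\noindent\textbf{Conclusion and main obstacle.}
Because the first summand of $\|\cdot\|_{Y_{T_{\vec a}}}$ is precisely $\sup_{t\in[0;T_{\vec a}]}\|\cdot(t)\|_2$, summing the geometric estimate gives $\sup_{t\in[0;T_{\vec a}]}\|\vec u_m(t)-\vec u(t)\|_2\leq 2^{-(m-1)}/(64C_2)$, that is, convergence effective in $m$ and uniform in $t$; a short extra argument (splitting $\int_0^t$ at $t/2$ and using the scale-invariant integral, together with $[\vec u_m]_{\leq t}\to0$ as $t\to0$, a property that propagates along the iteration from $[\vec u_0]_{\leq t}\to0$) shows $\mathcal{G}(\vec u_m,\vec u_m)(t)\to0$ in $\ELL2(\Omega)$ as $t\to0$, so each $\vec u_m$ --- hence the limit $\vec u$ --- extends to $[0;T_{\vec a}]$ with value $\vec a$ at $t=0$ and lies in $\C([0;T_{\vec a}],\SOLZ{2}(\Omega))$. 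I expect the crux to be exactly the step of the second paragraph: since $\vec a\in\SOLZ{2}(\Omega)$ is \emph{scaling-critical} for the two-dimensional problem, the bilinear bound (ii) carries no positive power of $T$, so the lifespan cannot be extracted from $\|\vec a\|_2$ by a subcriticality/smallness argument --- it must come from the rate at which $[\vec u_0]_T$ decays, and pulling a computable modulus for that decay out of a name that only certifies $\ELL2$-approximation is precisely what forces the use of the concrete smooth structure of the elements of $\calP$ (and, in the companion statement that the iteration map is computable, of the Bessel-space computability of $\nabla$ and of multiplication established in Subsection~\ref{ss:MulDiff}).
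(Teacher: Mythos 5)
Your proposal is correct, and its computability core coincides exactly with the paper's: the lifespan $T_{\vec a}$ is extracted from an \emph{effective} modulus for the decay of a scaling-critical, time-weighted seminorm of the linear flow, obtained by splitting $\vec a=(\vec a-\vec\poly_k)+\vec\poly_k$ along a $\deltaSOLZ{2}$-name, absorbing the rough part with the uniform smoothing bound $C_\beta\|\vec a-\vec\poly_k\|_2$ and the smooth part with a positive power of $T$ times a norm of $\vec\poly_k$ that is computable because elements of $\calP$ are concrete smooth fields (this is precisely the paper's Claim~1, and you correctly identify it as the crux). Where you diverge is in the analytic scaffolding. The paper works with the two weighted fractional-power quantities $t^{1/4}\|\IA^{1/4}u_m(t)\|_2$ and $t^{1/2}\|\IA^{1/2}u_m(t)\|_2$, uses Giga's estimate $\|\IA^{-1/4}\IP(\vec u\cdot\nabla)\vec v\|_2\leq M\|\IA^{1/4}\vec u\|_2\,\|\IA^{1/2}\vec v\|_2$ (Fact~\ref{f:A-alpha}(5)) as the bilinear workhorse, and runs an explicit double induction: Claims~2--3 propagate the bound $k^{\vec a}_m\leq K<1/(2\widetilde C)$ along the iterates via the recursion $k_{m+1}\leq k_0+\widetilde C k_m^2$, and Claim~4 then yields the geometric decay $t^\alpha\|\IA^\alpha(u_{m+1}-u_m)(t)\|_2\leq 2KC_{\alpha+1/4}(2\widetilde CK)^{m-1}B(\cdot,\cdot)$, with $\alpha=0$ giving the conclusion. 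You instead use Kato's single auxiliary seminorm $t^{1/4}\|\cdot\|_{\ELL{4}}$ and package the induction as a contraction on a ball of $Y_T$; the two are equivalent in substance (your Picard iteration \emph{is} the paper's sequence, and your $\|\vec u_{m+1}-\vec u_m\|_{Y_T}\leq 2^{-m}\|\vec u_1-\vec u_0\|_{Y_T}$ is Claim~4 at $\alpha=0$), but your version is tidier and makes the $T$-independence of the bilinear constant more visible. Two small caveats: your bilinear estimate rests on the boundedness of $\IA^{-1/2}\Helmholtz\,\divergence$ on $\ELL{2}(\Omega)$ and the embedding $\dom(\IA^{1/4})\hookrightarrow\ELL{4}(\Omega)$ for the Dirichlet Stokes operator on a bounded domain, which are classical but are not among the facts the paper records, so they inherit the same ``these constants are in fact computable'' caveat the paper attaches to $C$, $M$, $C_\alpha$; and your computable bound on $\|\IA^{1/4}\vec\poly_k\|_2$ via the moment inequality $\|\IA^{1/4}\vec\poly_k\|_2^2\leq\|\vec\poly_k\|_2\,\|\IA^{1/2}\vec\poly_k\|_2$ and $\|\IA^{1/2}\vec\poly_k\|_2=\|\nabla\vec\poly_k\|_2$ is a clean, slightly more elementary alternative to the paper's Corollary~\ref{c:A-alpha}, which computes $\|\IA^\beta\vec\poly_k\|_2$ from the explicit spectral representation.
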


Recall that a sequence $\{x_m\}$ in a metric space $(X,d)$ is
effectively convergent if $d(x_m,x_{m+1})\leq2^{-m}$. In view of
type conversion (Subsection~\ref{ss:Overview}), the following
proposition asserts (ii):

\begin{proposition} \label{p:iteration0} The map
$\IS: \IN\times \SOLZ{2}(\Omega)\times [0, \infty)\to
\SOLZ{2}(\Omega)$, $(m, \vec a, t)\to \vec{u}_m(t)$ according to
Equation~(\ref{e:iteration0}), is
$\big(\nu\times\deltaSOLZ{2}\times\myrho,
\deltaSOLZ{2}\big)$-computable.
\end{proposition}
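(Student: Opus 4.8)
The plan is to prove computability of $\IS$ by induction on $m$, using the structure of the iteration~(\ref{e:iteration0}). The base case $m=0$ is exactly Proposition~\ref{p:linear-NS} together with Fact~\ref{f:conversion}: the map $(\vec a,t)\mapsto e^{-t\IA}\vec a$ is $(\deltaSOLZ{2}\times\myrho,\deltaSOLZ{2})$-computable (this is the map $W$ noted after the proof of Proposition~\ref{p:linear-NS}). For the inductive step, assuming a $[\myrho\myto\deltaSOLZ{2}]$-name of $s\mapsto\vec u_m(s)$ can be computed from $\vec a$, I must compute a $\deltaSOLZ{2}$-name of
\[
\vec u_{m+1}(t)\;=\;e^{-t\IA}\vec a\;-\;\int_0^t e^{-(t-s)\IA}\,\IB\vec u_m(s)\,ds .
\]
The first summand is handled by the base case, so the crux is the Duhamel integral term. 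I would break this into three computable sub-tasks: (a) from the $\SOLZ{2}$-valued path $s\mapsto\vec u_m(s)$, produce a stronger-norm (Bessel-space $\SOB{2}{s}$ for some $1<s<2$, or at least $\SOB{2}{1}$) path so that $\IB\vec u_m(s)=\Helmholtz(\vec u_m(s)\cdot\nabla)\vec u_m(s)$ is defined; (b) evaluate $\IB\vec u_m(s)\in\ELL{2}(\Omega)^2$ using the computability of differentiation $\SOB{2}{s}\to\ELL{2}$ and of pointwise multiplication $\SOB{2}{s}\times\ELL{2}\to\ELL{2}$ (asserted in Subsection~\ref{ss:MulDiff}), followed by the Helmholtz projection (Proposition~\ref{projection-P}), landing back in $\SOLZ{2}(\Omega)$; (c) apply $e^{-(t-s)\IA}$ via the map $W$ and integrate in $s$ over $[0;t]$.

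The smoothing step (a) is where the real work lies, and I expect it to be the main obstacle. A bare $\deltaSOLZ{2}$-name of $\vec u_m$ carries only $L_2$ information and is genuinely too weak to feed into $\IB$. The resolution is that $\vec u_m(s)$ is not an arbitrary $L_2$ element: for $s>0$ it lies in $\dom(\IA)\subseteq\SOB{2}{2}$ with quantitative smoothing bounds $\|\IA^{\theta}e^{-s\IA}\vec a\|_2\le C_\theta s^{-\theta}\|\vec a\|_2$ coming from analyticity of the semigroup (\mycite{\S 2.6}{Pazy83}), and these bounds propagate through the iteration. Concretely, I would revisit the proof of Proposition~\ref{p:linear-NS}: the truncated approximations $\beta_1(k)+\beta_2(k)$ constructed there are finite trigonometric sums, hence their $\SOB{2}{s}$-norms are computable, and the same contour estimates (now with an extra factor $(1+n^2+m^2)^{s/2}$ carrying the fractional derivative, absorbed using an $s^{-s/2}$-type weight) show the $\SOB{2}{s}$-name of $e^{-s\IA}\vec a$ is computable from $\vec a$ and $s>0$, with an explicit modulus of the blow-up as $s\to 0^+$. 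Thus step (a) is realized by upgrading the output representation of $W$ on the positive-time part; the integrable singularity $s^{-s/2}$ at $s=0$ (with $s/2<1$) is exactly what makes the $s$-integral in (c) converge and be computable, by splitting $\int_0^t=\int_0^\epsilon+\int_\epsilon^t$ and bounding the first piece using the modulus.

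Granting step (a), steps (b) and (c) are routine compositions of already-established computable maps, stitched together with Fact~\ref{f:conversion} for the type conversions between $\C([0;\infty),\cdot)$-valued outputs and $\cdot$-valued-with-time-parameter inputs; the integral in (c) is computable because integration of a $\ELL{2}$-valued (resp.\ $\SOLZ{2}$-valued) continuous function against the computable kernel-action $(s,t)\mapsto e^{-(t-s)\IA}$ reduces, via Riemann sums of computable width and the decay estimate~(\ref{C-0-I}) giving a uniform bound $\|e^{-(t-s)\IA}\IB\vec u_m(s)\|_2\le\|\IB\vec u_m(s)\|_2$, to a computable limit of computable finite sums. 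Finally, since all constants in the smoothing and multiplication estimates are obtained effectively, the whole construction is uniform in $m$ (the $\nu$-input), which is what the statement demands; care is only needed that the norm bounds one tracks through the induction do not merely exist but come with a computable handle, which they do because at each stage they are explicit expressions in $\|\vec a\|_2$, $t$, and the constants $C_\theta$, $C$ from \mycite{\S 2.6}{Pazy83} and Subsection~\ref{ss:MulDiff}.
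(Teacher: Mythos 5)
Your overall architecture (induction on $m$; upgrade the $L_2$-name of $\vec u_m(s)$ to a Sobolev name for $s>0$ via semigroup smoothing; compute $\IB\vec u_m$ by the differentiation/multiplication lemmas and the Helmholtz projection; then integrate in $s$) is essentially the architecture of the paper's Lemma~\ref{H-alpha}, to which the paper delegates the case $t>0$. But there are two genuine gaps. First, your treatment of the Duhamel integrand is too crude. You bound $\|e^{-(t-s)\IA}\IB\vec u_m(s)\|_2\le\|\IB\vec u_m(s)\|_2$ and claim an integrable singularity of order $s^{-\sigma/2}$ with $\sigma/2<1$; but $\IB$ is \emph{quadratic}, so $\|\IB\vec u_m(s)\|_2\le\|\vec u_m(s)\|_\infty\,\|\nabla\vec u_m(s)\|_2$ costs the product of two smoothing factors, roughly $s^{-1/2-\delta}\cdot s^{-1/2}$, i.e.\ at best $s^{-1}$ --- not integrable at $s=0$ for data merely in $\SOLZ{2}(\Omega)$. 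With only this bound neither the absolute convergence of $\int_0^t$ nor the effective vanishing of $\int_0^\epsilon$ follows, so your splitting $\int_0^t=\int_0^\epsilon+\int_\epsilon^t$ cannot be closed. The fix the paper uses (following Giga--Miyakawa) is the bilinear estimate of Fact~\ref{f:A-alpha}-(5), $\|\IA^{-1/4}\IP(\vec u\cdot\nabla)\vec v\|_2\le M\|\IA^{1/4}\vec u\|_2\,\|\IA^{1/2}\vec v\|_2$, combined with $\|\IA^{1/4}e^{-(t-s)\IA}\|\le C_{1/4}(t-s)^{-1/4}$: measuring the nonlinearity in a negative-order norm and paying for it with semigroup smoothing yields the integrable bound $(t-s)^{-1/4}s^{-3/4}$ (a Beta-function integral), and the recursively computable constants $M_{\beta,m}$, $K^{\vec a}_{\beta,m}$ that the paper tracks are built precisely on this. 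Your proposal never invokes this estimate, and without it (or an equivalent asymmetric distribution of derivatives between the two factors and the semigroup) the core step fails.

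Second, you do not treat $t=0$. A $\myrho$-name of $t$ does not reveal whether $t=0$ or $t>0$, so you cannot branch on that; what is needed --- and what the paper's proof of this proposition is almost entirely devoted to, the $t>0$ case having been settled in Lemma~\ref{H-alpha} --- is a modulus $\eta(m,k)$, computable from $\vec a$, such that $\|\vec u_{m+1}(t)-\vec a\|_2\le 2^{-k}$ whenever $0<t<2^{-\eta(m+1,k)}$. Showing that the Duhamel term is effectively small for small $t$ again requires the integrable $(t-s)^{-1/4}s^{-3/4}$ bound together with the quantitative smallness of $\sup_{0<t\le T}t^{\beta}\|\IA^{\beta}\vec u_m(t)\|_2$ on computably short time intervals (the quantities $T(\vec a,m,n)$ and $L^{\vec a}_{\beta,m}$ of the remark closing Subsection~\ref{ss:iteration}); so this second gap cannot be repaired independently of the first.
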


The main difficulties in proving the two propositions are rooted in
the nonlinearity of $\IB$: the nonlinear operator $\IB$ requires
greater care in estimating the rate of convergence and demands
richer codings for computation. Since information on $\IB\vec{u}_m$
is required in order to compute $\vec u_{m+1}$, but  $\IB\vec
u_m=\Helmholtz\, (\vec u_m\cdot\nabla)\vec u_m$ involves both
differentiation and multiplication, it follows that a
$\deltaSOLZ{2}$-name of $\vec{u}_m$ may not contain enough
information for computing $\IB \vec{u}_m$. Moreover, since estimates
of type $\| \IA^{\alpha}\vec{u}_m(t)\|_2$, $0\leq \alpha \leq 1$,
play a key role in proving Propositions \ref{p:convergence} and
\ref{p:iteration0}, we need to computationally derive a richer code
for $\vec{u}_m$ from a given $\delta_{L^{\sigma}_{2, 0}}$-name of
$\vec{u}_m$ in order to capture the fact that $\vec{u}_m$ is in the
domain of $\IA^{\alpha}$ for $t>0$.

\subsection{Representing and Operating on Space $H^s_{2, 0}(\Omega)$}
\label{ss:MulDiff}

We begin by recalling several definitions and facts. Let $\theta_{n,
m}(x, y):=e^{i ( nx + my)\pi}$, $n, m\geq 0$. Then, the sequence
$\{\theta_{n, m}(x, y)\}_{n, m\geq 0}$ is a  computable orthogonal
basis of $\ELL{2}(\Omega)$. For any $s\geq 0$, $\SOB{2}{s}(\Omega)$
is the set of all (generalized) functions $w(x, y)$ on $\Omega$
satisfying $\sum_{n,m\geq 0}(1+n^2+m^2)^s|w_{n,m}|^2<\infty$, where
$w_{n,m}=\int_{-1}^1\int_{-1}^1w(x, y)\theta_{n,m}(x, y)\,dx\,dy$.
$\SOB{2}{s}(\Omega)$ is a Banach space with a norm
$\|w\|_{\SOB{2}{s}}=(\sum_{n,m\geq
0}(1+n^2+m^2)^s|w_{n,m}|^2)^{1/2}$. \\

Let $D(\IA^{\alpha})$ be the domain of $\IA^{\alpha}$. Since
\begin{align*}
D(\IA)=& L^{\sigma}_{2, 0}(\Omega)\bigcap \{ \vec{u}\in
(H^2_2(\Omega))^2 : \mbox{$\vec{u}=0$ on $\partial \Omega$}\}, \\
D(\IA^{1/2})= & L^{\sigma}_{2, 0}(\Omega)\bigcap \{ \vec{u}\in
(H^1_2(\Omega))^2 : \mbox{$\vec{u}=0$ on $\partial \Omega$}\},
\end{align*}
and $D(\IA^{\alpha})$, $0\leq \alpha \leq 1$, are the complex
interpolation spaces of $L^{\sigma}_{2, 0}(\Omega)$ and $D(\IA)$, we
need to represent the subspace of $H^s_2(\Omega)$ in which the
functions vanish on $\partial \Omega$. However, it is usually
difficult to design a coding system for such subspaces. Fortunately,
for $0\leq s<3/2$, it is known classically that
\begin{equation} \label{e:s-restriction}
H^s_{2, 0}(\Omega) = \{ w\in H^s_{2}(\Omega) \, : \, \mbox{$w=0$ on
$\partial \Omega$}\}
\end{equation}
where $H^s_{2, 0}(\Omega)$ is
the closure in $H^s_2$-norm of the set of all $C^{\infty}$-smooth
functions defined on compact subsets of $\Omega$. For $H^s_{2,
0}(\Omega)$, there is a canonical coding system
\[ \mathcal{H} = \{ \gamma_n\ast q \, : \, n\in \mathbb{N}, q\in
\mathbb{Q}[\mathbb{R}^2] \} \] (see (\ref{e:gamma}) and
(\ref{e:convolution}) for the definitions of $\gamma_n$ and
$\gamma_n \ast q$). Then every $w$ in $H^s_{2, 0}(\Omega)$ can be
encoded by a sequence $\{ p_k\}\subset \mathcal{H}$ such that $\|
p_k - w\|_{H^s_2}\leq 2^{-k}$; the sequence $\{ p_k \}$, which are
mollified polynomials with rational coefficients,  is called a
$\delta_{H^s_{2, 0}}$-name of $w$. If $w=(w_1, w_2)\in H^s_{2,
0}(\Omega)\times H^s_{2, 0}(\Omega)$, a $\delta_{H^s_{2, 0}}$-name
of $w$ is a sequences $\{ (p_k, q_k)\}$, $p_k, q_k\in \mathcal{H}$,
such that $(\| w_1 - p_k\|^2_{H^s_{2, 0}} + \| w_2 -
q_k\|^2_{H^s_{2, 0}})^{1/2}\leq 2^{-k}$. \\

\begin{notation} \label{n:2} We use $\| w\|_{H^s_2}$ to denote the
$H^s_2$-norm of $w$ if $w$ is in $H^s_2(\Omega)$ or $H^s_2\times
H^s_2$-norm of $w$ if $w$ is in $H^s_2(\Omega)\times H^s_2(\Omega)$.
Also for readability we use $[\myrho\to \deltaSOB{s}]$ to denote the
canonical representation of either $\C\big([0;T];
\SOB{2,0}{s}(\Omega)\big)$ or $\C\big([0;T];
\SOB{2,0}{s}(\Omega)\times \SOB{2,0}{s}(\Omega)\big)$. \\
\end{notation}

Recall that $\C\big([0;T]; \SOB{2,0}{s}(\Omega)\big)$ is the set of
all continuous functions from the interval $[0;T]$ to
$\SOB{2,0}{s}(\Omega)$. A function $u\in \C\big([0;T];
\SOB{2,0}{s}(\Omega)\big)$ is computable if there is a machine that
computes a $\deltaSOB{s}$-name of $u(t)$ when given a $\myrho$-name
of $t$ as input; and a map $F: X\to \C\big([0;T];
\SOB{2,0}{s}(\Omega)\big)$ from a represented space $(X, \delta_X)$
to $\C\big([0;T]; \SOB{2,0}{s}(\Omega)\big)$ is computable if there
is a machine that computes a $\deltaSOB{s}$-name of $F(x)(t)$ when
given a $\delta_X$-name of $x$ and a $\myrho$-name of $t$. Let $X$
be either $\ELL{2}(\Omega)$, $L^{\sigma}_{2, 0}(\Omega)$,
$\SOB{2,0}{s}(\Omega)$, or $\C\big([0;T];
\SOB{2,0}{s}(\Omega)\big)$. We remark again that a $\delta_{X}$-name
of $f\in X$ is simply an effective approximation of $f$ because each
space $X$ is equipped with a norm. \\

\begin{lemma} \label{l:differentiation}
For $s\geq 1$, differentiation $\partial_x,\partial_y:
\SOB{2,0}{s}(\Omega)\to \ELL{2}(\Omega)$ is $(\deltaSOB{s},
\deltaELL{2})$-computable.
\end{lemma}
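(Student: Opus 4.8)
The plan is to work directly with the Fourier representation of elements of $\SOB{2,0}{s}(\Omega)$ in terms of the computable orthogonal basis $\{\theta_{n,m}\}_{n,m\geq0}$, since the $H^s_2$-norm is defined through exactly these coefficients. First I would note that given a $\deltaSOB{s}$-name of $w$ --- that is, a sequence $\{p_k\}\subseteq\mathcal{H}$ of mollified rational polynomials with $\|p_k-w\|_{H^s_2}\leq 2^{-k}$ --- the Fourier coefficients $w_{n,m}=\int_{-1}^1\int_{-1}^1 w\,\theta_{n,m}$ are computable from the name: each $(p_k)_{n,m}$ is computable (integration of a $\Cinfty$ function against $\theta_{n,m}$ is computable, cf.\ \mycite{\S6.4}{Weih00}), and the $H^s_2$-convergence of $p_k\to w$ forces coefficientwise convergence at a rate controlled by $(1+n^2+m^2)^{-s/2}$, so $w_{n,m}=\lim_k (p_k)_{n,m}$ with an effective modulus. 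Moreover the partial sums of $\sum(1+n^2+m^2)^s|w_{n,m}|^2$ are computable and the series sum is computable (it is just $\|w\|_{H^s_2}^2$, obtainable as $\lim_k\|p_k\|_{H^s_2}^2$, each term computable).

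Next, to produce a $\deltaELL{2}$-name of $\partial_x w$, I would use that formally $\partial_x w=\sum_{n,m\geq0} (i n\pi)\, w_{n,m}\,\theta_{n,m}$ (with appropriate care: $\theta_{n,m}(x,y)=e^{i(nx+my)\pi}$, so $\partial_x\theta_{n,m}=in\pi\,\theta_{n,m}$), hence the $L_2$-Fourier coefficients of $\partial_x w$ are $in\pi\,w_{n,m}$, and
\[
\|\partial_x w\|_2^2=\sum_{n,m\geq0}(n\pi)^2|w_{n,m}|^2
\;\leq\;\pi^2\sum_{n,m\geq0}(1+n^2+m^2)^s|w_{n,m}|^2\;=\;\pi^2\|w\|_{H^s_2}^2
\]
whenever $s\geq1$, so $\partial_x w\in\ELL{2}(\Omega)$ and the series defining it converges in $L_2$. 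The tail $\sum_{n^2+m^2\geq N}(n\pi)^2|w_{n,m}|^2$ is bounded by $\pi^2\sum_{n^2+m^2\geq N}(1+n^2+m^2)^s|w_{n,m}|^2$, which is the tail of a computable convergent series, hence effectively small; so there is an algorithm computing $N=N(w,k)$ after which truncation errs by at most $2^{-2(k+1)}$. I would then compute the finite trigonometric polynomial $\sum_{n^2+m^2<N} in\pi\,w_{n,m}\theta_{n,m}$ to sufficient coefficient accuracy --- but since $\deltaELL{2}$-names must be \emph{real} rational polynomials in $\calP$-style form, I would instead search: enumerate $\IQ[\IR^2]$, compute the $L_2$-distance (a computable operation via the explicit integrals against $\theta_{n,m}$, or by expanding in the basis) between each candidate and the finite partial sum, and halt at the first candidate within $2^{-(k+1)}$, which succeeds because polynomials are $L_2$-dense. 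Combining the two $2^{-(k+1)}$ errors yields the required $2^{-k}$ approximation, and the same argument applies verbatim to $\partial_y$ with $in\pi$ replaced by $im\pi$ and the bound $m^2\leq 1+n^2+m^2$.

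The main obstacle, and the point requiring genuine care rather than routine estimation, is the bookkeeping that turns the qualitative statement ``$p_k\to w$ in $H^s_2$ implies coefficientwise convergence'' into a fully \emph{uniform} and effective procedure: one must extract from the $\deltaSOB{s}$-name both the individual coefficients $w_{n,m}$ \emph{and} a computable bound on the tail $\sum_{n^2+m^2\geq N}(1+n^2+m^2)^s|w_{n,m}|^2$ simultaneously, and these must be threaded together so that the truncation index $N(w,k)$ is genuinely computable from the name and $k$. This is where the condition $s\geq1$ is used essentially --- it is exactly what makes $(n\pi)^2$ (and $(m\pi)^2$) dominated by $(1+n^2+m^2)^s$, so that control of the $H^s_2$-tail gives control of the $L_2$-tail of the differentiated series. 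A minor additional point is justifying that term-by-term differentiation of the Fourier series is legitimate in the $L_2$-sense for every $w\in\SOB{2,0}{s}(\Omega)$ with $s\geq1$; this is classical (it is essentially the definition of the weak derivative via Fourier coefficients on the torus-type basis), but since our functions live on $\Omega=(-1,1)^2$ and vanish on $\partial\Omega$, I would remark that the boundary terms in integration by parts drop out precisely because $w\in\SOB{2,0}{s}$, which is why we work with the ``$0$''-subspace rather than all of $H^s_2(\Omega)$.
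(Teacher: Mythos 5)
Your proof is correct in substance, but it takes a genuinely different algorithmic route from the paper's. The paper never reconstructs the Fourier coefficients of $w$, never computes $\|w\|_{\SOB{2}{s}}$, and never truncates or searches: it differentiates the name elements themselves. Since $\partial_x(\gamma_n\ast q)=\gamma_n\ast\partial_x q$, the map $p_k\mapsto\partial_x p_k$ is computable on $\mathcal H$, and one rationalizes $\partial_x p_k$ to a nearby polynomial $\tilde p_k$; the entire analytic content is then the single estimate
\[
\|\partial_x p_k-\partial_x w\|_2^2\;=\;\pi^2\sum_{n,m}n^2\,|p_{k,n,m}-w_{n,m}|^2\;\le\;\pi^2\sum_{n,m}(1+n^2+m^2)^s|p_{k,n,m}-w_{n,m}|^2\;=\;\pi^2\|p_k-w\|_{\SOB{2}{s}}^2,
\]
i.e.\ $\partial_x:\SOB{2,0}{s}(\Omega)\to\ELL{2}(\Omega)$ is bounded for $s\ge1$, so the images of the approximants are automatically approximants of $\partial_x w$ with explicit error $(1+\pi)2^{-k}$. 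You use exactly the same multiplier inequality $n^2\le(1+n^2+m^2)^s$, but routed through the limit object rather than through the names: extract $w_{n,m}$ and an effective tail bound, truncate at a computable $N(w,k)$, and search $\IQ[\IR^2]$ for an $\ELL{2}$-approximant of the partial sum. This is the pattern of the paper's proof of Proposition~\ref{projection-P} and it does work here, but the truncation index, the computation of $\|w\|^2_{\SOB{2}{s}}$ as an effective limit of $\|p_k\|^2_{\SOB{2}{s}}$, and the terminating search are machinery that the Lipschitz bound lets you skip entirely, since a computably bounded linear map sends effective Cauchy names to effective Cauchy names. What your route buys in exchange is robustness: it needs only that coefficients and a tail bound be extractable from a name, not that the name elements be exactly differentiable, so it would survive a change of the coding set $\mathcal H$. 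Two small points to tidy: both proofs tacitly use term-by-term differentiation of the series for $w$ (your integration-by-parts remark, with boundary terms killed by $w\in\SOB{2,0}{s}$, is the right justification, and the paper uses the same identity silently), and your error bookkeeping mixes the bound $2^{-2(k+1)}$ on the squared tail with $2^{-(k+1)}$ on the norm --- harmless, but worth making consistent.
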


\begin{proof} Let $\{ p_k\}$ be a $\deltaSOB{s}$-name of
$w\in H^s_{2, 0}(\Omega)$. Since $\partial_x (\gamma\ast q) = \gamma
\ast \partial_xq$, the map $p_k \mapsto \partial_x p_k$ is
computable; hence a polynomial $\tilde{p}$ in
$\mathbb{Q}[\mathbb{R}^2]$ can be computed from $p_k$ such that
$\max_{-1\leq x, y\leq 1}|\partial _x p_k - \tilde{p}_k|<2^{-k}$.
Next let us express $w$ and $p_k$ in the orthogonal basis
$\theta_{n, m}$: $w(x, y)=\sum_{n,m\geq 0}w_{n, m}e^{in\pi
x}e^{im\pi y}$ and $p_k(x, y)= \sum_{n, m\geq 0}p_{k, n, m}e^{in\pi
x}e^{im\pi y}$, where
\begin{align*}
& w_{n,m}=\int_0^1\int_0^1w(x, y)e^{in\pi
x}e^{im\pi y}\,dx\,dy\, , \\
& p_{k, n, m}=\int_0^1\int_0^1p_k(x,
y)e^{in\pi x}e^{im\pi y}\,dx\,dy\, .
\end{align*}
 Since $s\geq 1$ and $\{ p_k\}$ is
a $\deltaSOB{s}$-name of $w$, it follows that
\begin{align*}
& \|\partial_x p_k - \partial_x w\|^2_2 \\
& =  \left\|
\sum\nolimits_{n,m}in\pi (p_{k,n,m}-w_{n,m})e^{in\pi x}e^{im\pi
y}\right\|^2_2
\; =\;  \pi^2\sum\nolimits_{n,m}n^2|p_{k, n, m}-w_{n, m}|^2 \\
& =  \pi^2\sum_{n, m}\frac{n^2}{(1+n^2+m^2)^s} (1+n^2+m^2)^s|p_{k,
n,
m}-w_{n, m}|^2 \\
& \leq  \pi^2\sum\nolimits_{n, m}(1+n^2+m^2)^s|p_{k, n, m}-w_{n,
m}|^2 \;=\; \pi^2\| p_k-w\|^2_{\SOB{2}{s}} \;\leq\; \pi^2\cdot
2^{-2k}
\end{align*}
which further implies that
\[ \| \tilde{p}_k - \partial_x w\|_2 \leq \| \tilde{p}_k - \partial_x
p_k\|_2 + \| \partial_x p_k - \partial_x w\|_2 \leq 2^{-k} + \pi
2^{-k} \] Thus, by definition, $\{ \tilde{p}_k \}$ is a
$\deltaELL{2}$-name of $\partial_x w$.
\end{proof}
\medskip

It is known classically that every polygonal domain in
$\IR^2$ is Lipschitz (see, for example, \cite{McL00}) and
$\SOB{2}{s}(U)$ is continuously embedded in $\C(\overline{U})$ if $s>1$ and
$U$ is a bounded Lipschitz domain, where $\overline{U}$ is the
closure of $U$ in $\IR^2$ and $\C(\overline{U})$ is the set of
all continuous functions on $\overline{U}$. Since $\Omega$ is a
bounded polygonal domain, it follows that for any $s>1$, there is a
constant $C_s>0$ such that $\| w\|_{C(\overline{\Omega})}\leq C_s\|
w\|_{\SOB{2}{s}(\Omega)}$, where $\| w\|_{C(\overline{\Omega})}=\|w\|_\infty=\max\{ |w(x, y)| \,
: \, (x, y)\in \overline{U}\}$.

\begin{lemma} \label{l:multiplication} For $s>1$, multiplication $Mul:
\SOB{2}{s}(\Omega)\times \ELL{2}(\Omega)\to \ELL{2}(\Omega)$, $(v,
w)\mapsto vw$, is $(\deltaSOB{s}\times \deltaELL{2},
\deltaELL{2})$-computable.
\end{lemma}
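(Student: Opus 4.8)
The plan is to follow the template of the proof of Lemma~\ref{l:differentiation}, now invoking the embedding $\SOB{2}{s}(\Omega)\hookrightarrow\C(\overline\Omega)$ recalled just above (valid since $s>1$) in place of the differentiation estimate. Fix a $\deltaSOB{s}$-name $\{p_k\}\subseteq\mathcal{H}$ of $v$ and a $\deltaELL{2}$-name $\{q_k\}\subseteq\IQ[\IR^2]$ of $w$, so that $\|v-p_k\|_{\SOB{2}{s}}\le2^{-k}$ and $\|w-q_k\|_2\le2^{-k}$. Exactly as in Lemma~\ref{l:differentiation}, each $p_k=\gamma_n\ast q$ is a computable $\Cinfty$ function on the compact square $\overline\Omega$, so from $p_k$ we can compute a rational polynomial $\tilde p_k\in\IQ[\IR^2]$ with $\|p_k-\tilde p_k\|_\infty<2^{-k}$; then $\tilde p_k\,q_k\in\IQ[\IR^2]$ is a legitimate entry of a $\deltaELL{2}$-name, and the algorithm will output $\tilde p_{k(N)}\,q_{k(N)}$ on precision $N$, for a computable $k(N)$ fixed at the end.

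For the error analysis, split $vw-\tilde p_k\,q_k=(v-\tilde p_k)\,w+\tilde p_k\,(w-q_k)$ and use the pointwise product bound $\|fg\|_2\le\|f\|_\infty\|g\|_2$ together with $\|\cdot\|_\infty\le C_s\|\cdot\|_{\SOB{2}{s}}$:
\begin{align*}
\|vw-\tilde p_k\,q_k\|_2
&\;\le\; \big(\|v-p_k\|_\infty+\|p_k-\tilde p_k\|_\infty\big)\,\|w\|_2 \;+\; \|\tilde p_k\|_\infty\,\|w-q_k\|_2\\
&\;\le\; (C_s+1)\,2^{-k}\,\|w\|_2 \;+\; \|\tilde p_k\|_\infty\,2^{-k}.
\end{align*}
Here $\|w\|_2\le\|q_0\|_2+1$, and $\|\tilde p_k\|_\infty\le\|v\|_\infty+(C_s+1)2^{-k}\le C_s\big(\|p_0\|_{\SOB{2}{s}}+1\big)+C_s+1$ uniformly in $k$; both $\|q_0\|_2$ (the $L_2$-norm of a rational polynomial over $\Omega$, with rational square) and $\|p_0\|_{\SOB{2}{s}}$ (the spectrally defined norm of the $\Cinfty$ compactly supported function $p_0$, computable as in the corresponding remark in the proof of Proposition~\ref{p:linear-NS}) are computable reals. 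The remaining ingredient, the embedding constant, is itself computable: since $|\theta_{n,m}|\equiv1$, Cauchy--Schwarz gives $\|u\|_\infty\le\sum_{n,m\ge0}|u_{n,m}|\le C_s\,\|u\|_{\SOB{2}{s}}$ with $C_s:=\big(\sum_{n,m\ge0}(1+n^2+m^2)^{-s}\big)^{1/2}$, which is a convergent series with effectively small tails precisely because $s>1$, hence a computable real.

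Combining these bounds yields $\|vw-\tilde p_k\,q_k\|_2\le D\cdot2^{-k}$ for a computable real $D$ extracted from $q_0$, $p_0$ and $C_s$; choosing $k(N)$ with $D\cdot2^{-k(N)}\le2^{-N}$ then makes $\{\tilde p_{k(N)}\,q_{k(N)}\}_N$ a $\deltaELL{2}$-name of $vw$, and since every step above is carried out uniformly in the two input names, this gives the claimed $(\deltaSOB{s}\times\deltaELL{2},\deltaELL{2})$-computability. I expect the only non-routine point to be securing a \emph{computable} grip on the auxiliary constants — above all on the Sobolev embedding constant $C_s$, which is not asserted to be computable in the statement preceding the lemma — and this is exactly what the spectral Cauchy--Schwarz estimate above supplies; everything else is bookkeeping around $\|fg\|_2\le\|f\|_\infty\|g\|_2$ and the $\|\cdot\|_\infty$-approximability of elements of $\mathcal{H}$ by rational polynomials already used in Lemma~\ref{l:differentiation}.
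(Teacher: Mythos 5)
Your proof is correct and follows essentially the same route as the paper's: both split the product error into two cross terms, bound each via $\|fg\|_2\le\|f\|_\infty\|g\|_2$ together with the embedding $\|\cdot\|_\infty\le C_s\|\cdot\|_{\SOB{2}{s}}$, and extract a computable modulus from computable bounds on the norms of $v$ and $w$. Your one substantive addition is the spectral Cauchy--Schwarz derivation of a \emph{computable} embedding constant $C_s=\bigl(\sum_{n,m\ge0}(1+n^2+m^2)^{-s}\bigr)^{1/2}$ (a series with effectively small tails precisely because $s>1$), a quantity the paper's choice of $k(n)$ uses implicitly without justification; your replacement of the mollified polynomial $p_k$ by a nearby rational polynomial $\tilde p_k$ also makes the output literally an entry of a $\deltaELL{2}$-name, which the paper glosses over.
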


\begin{proof} Assume that $\{ p_k\}$ is a $\deltaSOB{s}$-name of $v$
and $\{ q_k\}$ is a $\deltaELL{2}$-name of $w$. For each $n\in\IN$,
pick $k(n)\in\IN$ such that
$C_{s}\|v\|_{\SOB{2}{s}}\|w-q_{k(n)}\|_2\leq 2^{-(n+1)}$. Since
$\|v\|_{\SOB{2}{s}}$ is computable from $\{ p_k\}$, the function
$n\mapsto k(n)$ is computable from $\{p_k\}$ and $\{ q_k\}$. Next
pick $m(n)\in\IN$ such that $\|q_{k(n)}\|_{\C(\overline{\Omega})}\|
v- p_{m(n)}\|_{\SOB{2}{s}}\leq 2^{-(n+1)}$. It is clear that $m(n)$
is computable from $k(n)$, $\{ q_k\}$, and $\{ p_k\}$. The sequence
$\{ p_{m(n)}q_{k(n)}\}_n$ is then a $\deltaELL{2}$-name of $vw$, for
it is a sequence of polynomials of rational coefficients and $\| vw
- p_{m(n)} q_{k(n)}\|_2 \leq \| v\|_{\C(\overline{\Omega})}\| w -
q_{k(n)}\|_2 + \|q_{k(n)}\|_{\C(\overline{\Omega})}\| v-
p_{m(n)}\|_{\SOB{2}{s}}\leq 2^{-n}$.
\end{proof}

\subsection{Some classical properties of fractional powers of $\IA$}
\label{ss:fp-A}

It is known that fractional powers of the Stokes operator $\IA$ are
well defined; cf. \mycite{Section 2.6}{Pazy83}. In the following, we
summarize some classical properties of the Stokes operator and its
fractional powers; these properties will be used in later proofs.

\begin{fact} \label{f:A-alpha} Let $\IA$ be the Stokes operator.
\begin{itemize}
\item[(1)] For every $0\leq \alpha\leq 1$, let
$D(\IA^{\alpha})$ be the domain of $\IA^{\alpha}$; this is a Banach
space with the norm $\|\vec
u\|_{D(\IA^{\alpha})}:=\|\IA^{\alpha}\vec
u\|_{\SOLZ{2}(\Omega)}=\|\IA^{\alpha}\vec u\|_{2}$. In particular,
$D(\IA^{\alpha})$ is continuously embedded in $\SOB{2}{2\alpha}$,
that is, for every $\vec u\in D(\IA^{\alpha})$,
\begin{equation} \label{e:H-L}
\| \vec u\|_{\SOB{2}{2\alpha}} \leq  \| \vec
u\|_{D(\IA^{\alpha})}=C\|\IA^{\alpha}\vec u\|_{2}
\end{equation}
where $C$ is a constant independent of $\alpha$. Moreover, we have
$D(\IA^{1/2})=L^{\sigma}_{2, 0}(\Omega)\bigcap \{ \vec{u}\in
(H^1(\Omega))^2; \mbox{$\vec{u}=0$ on $\partial \Omega$}\}$.
\item[(2)] For every nonnegative $\alpha$ the
estimate
\begin{equation} \label{e:alpha-bound}
\| \IA^{\alpha}e^{-t\IA}\vec u\|_{2}\leq C_{\alpha}
t^{-\alpha}\|\vec u\|_{2}, \quad t>0
\end{equation}
is valid for all $\vec u\in \SOLZ{2}(\Omega)$, where $C_{\alpha}$ is
a constant depending only on $\alpha$. In particular, $C_0=1$.
Moreover, the estimate implies implicitly that for every $\vec{u}\in
L^{\sigma}_{2, 0}(\Omega)$, $e^{-t\IA}\vec{u}$ is in the domain of
$\IA$, and thus $e^{-t\IA}\vec{u}$ vanishes on the boundary of
$\Omega$ for $t>0$.
\item[(3)] If $\alpha \geq \beta >0$, then $D(\IA^{\alpha})\subseteq
D(\IA^{\beta})$.
\item[(4)] For $0<\alpha <1$, if $\vec u\in D(\IA)$, then
\[ \IA^{\alpha}\vec u=\frac{\sin
\pi\alpha}{\pi}\int_{0}^{\infty}t^{\alpha -1}\IA(t\II+\IA)^{-1}\vec
u\,dt \]
\item[(5)] $\|\IA^{-1/4}\IP (\vec u,\nabla)\vec v\|_{2}\leq M
\|\IA^{1/4}\vec u\|_{2}\|\IA^{1/2}\vec v\|_{2}$ is valid for all
$\vec u, \vec v$ in the domain of $\IA^{3/5}$, where $M$ is a
constant independent of $\vec u$ and $\vec v$.
\end{itemize}
\end{fact}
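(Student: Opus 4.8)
Since $\IA$ is a positive self-adjoint operator on the Hilbert space $\SOLZ{2}(\Omega)$, items (1)--(4) are instances of the general theory of fractional powers of sectorial operators, and I would simply invoke it, citing \cite{Pazy83}. Concretely: $\IA^{\alpha}$ is defined by the spectral calculus, it is closed, hence $D(\IA^{\alpha})$ with the graph norm $\|\IA^{\alpha}\,\cdot\,\|_{2}$ is a Banach space; the nesting $D(\IA^{\alpha})\subseteq D(\IA^{\beta})$ for $\alpha\geq\beta>0$ and the Balakrishnan representation in (4) are \mycite{\S2.6}{Pazy83}; and estimate (2) is the standard analytic-semigroup smoothing bound \mycite{Theorem~2.6.13}{Pazy83}, with $C_{0}=1$ because $e^{-t\IA}$ is a contraction semigroup by Lemma~\ref{l:linear-estimate}. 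For the embedding $D(\IA^{\alpha})\hookrightarrow\SOB{2}{2\alpha}$ I would use complex interpolation: $D(\IA^{\alpha})=[\SOLZ{2}(\Omega),D(\IA)]_{\alpha}$, while $\SOLZ{2}(\Omega)\hookrightarrow(\ELL{2}(\Omega))^{2}$ and $D(\IA)\hookrightarrow(\SOB{2}{2}(\Omega))^{2}$, so $D(\IA^{\alpha})\hookrightarrow[\ELL{2},\SOB{2}{2}]_{\alpha}=\SOB{2}{2\alpha}$; the concrete identification of $D(\IA^{1/2})$ is the (classical, solved) square-root problem for the Stokes operator on bounded Lipschitz domains, for which I would cite \cite{Giga83a}, \cite{GiMi85}, or \cite{Sohr}.

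The one genuinely nontrivial assertion is (5), a Fujita--Kato-type nonlinear estimate; the plan is a duality argument in dimension $2$. By self-adjointness of $\IA$, for $w=\IP(\vec u\cdot\nabla)\vec v\in\SOLZ{2}(\Omega)$ we have $\|\IA^{-1/4}w\|_{2}=\sup\{|\langle w,\IA^{-1/4}\psi\rangle|:\psi\in\SOLZ{2}(\Omega),\,\|\psi\|_{2}\leq1\}$. Set $\phi:=\IA^{-1/4}\psi\in D(\IA^{1/4})$, so $\|\IA^{1/4}\phi\|_{2}=\|\psi\|_{2}\leq1$, and, since $\phi\in\SOLZ{2}(\Omega)$ and $\IP$ is the orthogonal projection onto $\SOLZ{2}(\Omega)$, $\langle\IP(\vec u\cdot\nabla)\vec v,\phi\rangle=\langle(\vec u\cdot\nabla)\vec v,\phi\rangle$. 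Hölder's inequality with exponents $(4,2,4)$ gives
\[
|\langle(\vec u\cdot\nabla)\vec v,\phi\rangle|\;\leq\;C\,\|\vec u\|_{4}\,\|\nabla\vec v\|_{2}\,\|\phi\|_{4}.
\]
Now (1) together with the Sobolev embedding $\SOB{2}{1/2}(\Omega)\hookrightarrow\ELL{4}(\Omega)$ in dimension $2$ (valid since $\Omega$ is a bounded polygonal, hence Lipschitz, domain) yields $D(\IA^{1/4})\hookrightarrow(\SOB{2}{1/2}(\Omega))^{2}\hookrightarrow(\ELL{4}(\Omega))^{2}$ and $D(\IA^{1/2})\hookrightarrow(\SOB{2}{1}(\Omega))^{2}$, so $\|\vec u\|_{4}\leq C\|\IA^{1/4}\vec u\|_{2}$, $\|\nabla\vec v\|_{2}\leq C\|\IA^{1/2}\vec v\|_{2}$, and $\|\phi\|_{4}\leq C\|\IA^{1/4}\phi\|_{2}\leq C$. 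Taking the supremum over $\psi$ gives the claim. The hypothesis $\vec u,\vec v\in D(\IA^{3/5})$ (note $3/5>1/2>1/4$, so by (3) all the norms above are finite) ensures that the trilinear form is well defined; one proves the estimate first for smooth $\vec u,\vec v$ and extends by density. A full reference is \cite{GiMi85} or \cite{Sohr}.

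The main obstacle is the tightness of (5) in dimension $2$: the Hölder exponents satisfy $\tfrac14+\tfrac12+\tfrac14=1$ \emph{exactly}, so there is no slack in the split and the particular powers $\IA^{1/4},\IA^{1/2},\IA^{-1/4}$ are forced by this balance; one must also be careful that $\IA^{-1/4}$ maps $\SOLZ{2}(\Omega)$ into $D(\IA^{1/4})$ with the norm identity $\|\IA^{1/4}\IA^{-1/4}\psi\|_{2}=\|\psi\|_{2}$ and that the Helmholtz projection interacts correctly with the duality pairing. For (1)--(4) the only subtlety is the concrete description of $D(\IA^{1/2})$; everything else is routine operator theory.
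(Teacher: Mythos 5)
Your proposal is correct and matches the paper's treatment: the paper likewise disposes of (1)--(4) by citation (Lemmas 2.1--2.3 of \cite{Giga83a} for (1)--(2) and Theorems 6.8--6.9 of \mycite{\S 2.6}{Pazy83} for (3)--(4), with $C_0=1$ obtained from Lemma~\ref{l:linear-estimate} exactly as you argue) and handles (5) by citing Lemma~3.2 of \cite{Giga83a}. Your duality--H\"older--Sobolev sketch for (5) is a faithful reconstruction of the proof behind that cited lemma (the exponents $\tfrac14+\tfrac12+\tfrac14=1$ being precisely the borderline admissible case in dimension $2$), so the two routes coincide in substance.
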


\begin{proof} See Lemmas 2.1, 2.2 and 2.3 in \cite{Giga83a} for (1) and (2) except for
$C_0=1$; $C_0=1$ is proved in Lemma \ref{l:linear-estimate}. See
Theorems 6.8 and 6.9 in Section 2.6 of \cite{Pazy83} for (3) and
(4); Lemma 3.2 in \cite{Giga83a} for (5).
\end{proof}

We record, without going into the details, that the constants  $C$,
$M$, and $C_{\alpha}$ ($0\leq \alpha\leq 1$) appeared in
Fact~\ref{f:A-alpha} are in fact computable (some general discussions on the computability of Sobolev embedding constants and interpolation constants together with other constants in the PDE theory are forthcoming). 

\subsection{Proof of Proposition~\ref{p:convergence}} \label{ss:iteration}

In order to show that the iteration sequence is effectively
convergent, we need to establish several estimates on various
functions such as $\| \IA^{\beta}u_m(t)\|_2$ and $\|
\IA^{\beta}(u_{m+1}(t)-u_m(t))\|_2$ for $\beta$ being some positive
numbers. Subsequently, as a prerequisite, $u_m(t)$ must be in the
domain of $\IA^{\beta}$; thus the functions $u_m(t)$ are required to
have higher smoothness than the given initial function $\vec{a}$
according to Fact \ref{f:A-alpha}-(1). This is indeed the case: For
functions $u_m(t)$ obtained by the iteration (\ref{e:iteration0}),
it is known classically that if $u_m(0)\in L_2(\Omega)$ then
$u_m(t)\in H^{2\alpha}_2(\Omega)$ for $t>0$, where $0\leq \alpha
\leq 1$. In other words, $u_m(t)$ undergoes a jump in smoothness
from $t=0$ to $t>0$ (due to the integration). In the following
lemma, we  present an algorithmic version of this increase in
smoothness.

\begin{lemma} \label{H-alpha} Let $\alpha=3/5$. Then for the
iteration ~(\ref{e:iteration0})
\[ \vec u_0(t)\;=\;e^{-t\IA}\vec a
, \qquad \vec u_{m+1}(t)\;=\;\vec u_0(t)\;-\;\int^t_0e^{-(t-s)\IA
}\IB\vec u_m(s)\,ds \] the mapping $\IS_{H}: \IN\times
\SOLZ{2}(\Omega)\times (0, \infty)\to \SOB{2,
0}{2\alpha}(\Omega)\times \SOB{2, 0}{2\alpha}(\Omega)$, $(m, \vec a,
t)\mapsto \vec{u}_m(t)$,  is well-defined and $(\nu\times
\deltaSOLZ{2}\times \myrho, \deltaSOB{2\alpha})$-computable.
\end{lemma}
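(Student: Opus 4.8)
The plan is to prove the lemma by induction on $m$, carried out by a single recursive algorithm so that the conclusion is uniform in $m$. Fix $\alpha=3/5$, so the target smoothness index is $2\alpha=6/5$, which lies in the crucial range $(1,3/2)$: on the one hand, by (\ref{e:s-restriction}) the space $\SOB{2,0}{6/5}(\Omega)$ equals $\{w\in\SOB{2}{6/5}(\Omega):w=0\text{ on }\partial\Omega\}$ and is coded by the mollified polynomials of $\mathcal H$; on the other hand, by (\ref{e:H-L}) one has the continuous embedding $D(\IA^{3/5})\hookrightarrow\SOB{2}{6/5}(\Omega)$ with $\|w\|_{\SOB{2}{6/5}}\le C\|\IA^{3/5}w\|_2$ and $C$ computable. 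Hence, to $\deltaSOB{6/5}$-compute an element $w$ that we already know to lie in $D(\IA^{3/5})$ it suffices to compute, to prescribed accuracy, an auxiliary ``nice'' function $S$ with $\|\IA^{3/5}(w-S)\|_2$ (equivalently $\|w-S\|_{\SOB{2}{6/5}}$) small and then to search $\mathcal H$ for a mollified polynomial approximating $S$ in $\SOB{2}{6/5}$-norm; the search terminates since $\mathcal H$ is dense in $\SOB{2,0}{6/5}(\Omega)$ and $w$ lies in it. The workhorse, invoked at every stage, is the $\SOB{6/5}$-strengthening of Proposition~\ref{p:linear-NS}: the map $(\vec b,\tau)\mapsto e^{-\tau\IA}\vec b$ is $\big(\deltaSOLZ{2}\!\times\!\myrho,\deltaSOB{6/5}\big)$-computable on $\SOLZ{2}(\Omega)\times(0,\infty)$.

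I would establish this workhorse first; it is also the base case $m=0$ (with $\vec b=\vec a$, $\tau=t$). For $\vec b\in\calP$ one reruns the contour-integral truncation of Proposition~\ref{p:linear-NS} verbatim, but now measuring the tail $\beta_3$ and the truncation errors $\beta_j-\beta_j(k)$ in $\SOB{2}{6/5}$-norm: this amounts to inserting the extra weight $(1+n^2+m^2)^{6/5}$ into the relevant series, which converges with a sum computable from $\vec b$ (the $s{=}1$ analogue being (\ref{H^1-norm}); for the $\Cinfty_0$-functions in $\calP$ all such weighted sums are finite and computable), and then searching $\mathcal H$. For general $\vec b\in\SOLZ{2}(\Omega)$ given by a $\deltaSOLZ{2}$-name $\{\vec b_k\}\subset\calP$, the estimate $\|\IA^{3/5}e^{-\tau\IA}(\vec b-\vec b_k)\|_2\le C_{3/5}\,\tau^{-3/5}\,2^{-k}$ from (\ref{e:alpha-bound}) together with (\ref{e:H-L}) shows that $e^{-\tau\IA}\vec b_k$ approximates $e^{-\tau\IA}\vec b$ in $\SOB{2}{6/5}$-norm with a computable error (all constants being computable), reducing the general case to the $\calP$ case. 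Applied with $\tau=t>0$ this gives both the well-definedness $\vec u_0(t)=e^{-t\IA}\vec a\in D(\IA)\subseteq D(\IA^{3/5})\subseteq\big(\SOB{2,0}{6/5}(\Omega)\big)^2$ and its $\deltaSOB{6/5}$-computability.

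For the induction step, assume $\IS_{H}$ is $\deltaSOB{6/5}$-computable for $m$, i.e. $(\vec a,s)\mapsto\vec u_m(s)$ is $\big(\deltaSOLZ{2}\!\times\!\myrho,\deltaSOB{6/5}\big)$-computable on $\SOLZ{2}(\Omega)\times(0,\infty)$. Since $6/5>1$, Lemma~\ref{l:differentiation} makes $\partial_x,\partial_y:\SOB{2,0}{6/5}(\Omega)\to\ELL{2}(\Omega)$ computable and Lemma~\ref{l:multiplication} makes $(v,w)\mapsto vw:\SOB{2}{6/5}(\Omega)\times\ELL{2}(\Omega)\to\ELL{2}(\Omega)$ computable; composing with the Helmholtz projection (Proposition~\ref{projection-P}) shows that $s\mapsto\IB\vec u_m(s)=\Helmholtz\big((\vec u_m(s)\cdot\nabla)\vec u_m(s)\big)$ is a computable element of $\C\big((0,\infty),\SOLZ{2}(\Omega)\big)$. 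Writing $\vec u_{m+1}(t)=e^{-t\IA}\vec a-\int_0^t e^{-(t-s)\IA}\IB\vec u_m(s)\,ds$ as in (\ref{e:iteration0}), the first summand is handled by the workhorse. In the integral, for $0<s<t$ the integrand $e^{-(t-s)\IA}\IB\vec u_m(s)$ lies in $D(\IA^{3/5})\subseteq\big(\SOB{2,0}{6/5}(\Omega)\big)^2$ and a $\deltaSOB{6/5}$-name of it is produced by feeding the $\deltaSOLZ{2}$-name of $\IB\vec u_m(s)$ and the time $t-s>0$ into the workhorse, so $s\mapsto e^{-(t-s)\IA}\IB\vec u_m(s)$ is a computable element of $\C\big((0,t),(\SOB{2,0}{6/5}(\Omega))^2\big)$. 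To integrate it across the two endpoint singularities I would combine Fact~\ref{f:A-alpha}(5), the classical a priori estimates $\|\IA^{1/4}\vec u_m(s)\|_2\lesssim s^{-1/4}\|\vec a\|_2$ and $\|\IA^{1/2}\vec u_m(s)\|_2\lesssim s^{-1/2}\|\vec a\|_2$ (note $\|\IA^{1/2}\vec u_m(s)\|_2=\|\nabla\vec u_m(s)\|_2$ is itself computable from the name, and $\|\IA^{1/4}\vec u_m(s)\|_2$ follows by moment interpolation), and Fact~\ref{f:A-alpha}(2), to get
\begin{align*}
\big\|\IA^{3/5}e^{-(t-s)\IA}\IB\vec u_m(s)\big\|_2
&=\big\|\IA^{17/20}e^{-(t-s)\IA}\,\IA^{-1/4}\IB\vec u_m(s)\big\|_2\\
&\lesssim (t-s)^{-17/20}\,s^{-3/4}\,\|\vec a\|_2^2 ,
\end{align*}
whose right-hand side is integrable on $(0,t)$ with a computable integral (as $17/20<1$ and $3/4<1$). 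Thus $\int_0^{\varepsilon}+\int_{t-\varepsilon}^{t}$ of the $\IA^{3/5}$-norm of the integrand is effectively small as $\varepsilon\downarrow0$, while on each compact $[\varepsilon,t-\varepsilon]$ the integrand is a computable $\SOB{2,0}{6/5}$-valued continuous function, hence Bochner-integrable in a computable way; assembling these pieces yields a $\deltaSOB{6/5}$-name of the integral (which in particular lies in $D(\IA^{3/5})$, so $\vec u_{m+1}(t)$ is well-defined there), and subtracting it from the first summand completes the step.

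The main obstacle is exactly this integral term: naively $\|\IB\vec u_m(s)\|_2$ may grow like $s^{-6/5}$ as $s\downarrow0$, which is \emph{not} integrable, so the estimate must be routed through the negative fractional power $\IA^{-1/4}$, which forces us to carry along the classical fractional-power a priori bounds for $\vec u_m(s)$ with constants that are simultaneously computable and controlled in $m$ (the last point being the delicate one, although the on-the-fly computability of $\|\IA^{1/2}\vec u_m(s)\|_2=\|\nabla\vec u_m(s)\|_2$ and, by interpolation, of $\|\IA^{1/4}\vec u_m(s)\|_2$ limits how much must be imported from the classical theory), and to treat the companion singularity at $s=t$ as well. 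A secondary technical point is re-running, in the stronger $\SOB{2}{6/5}$-norm, the contour-truncation step behind Proposition~\ref{p:linear-NS}, where the computability of the higher Sobolev norms of elements of $\calP$ is used.
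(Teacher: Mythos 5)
Your proposal is correct and follows essentially the same route as the paper: induction on $m$, with the base case being an $\SOB{2}{6/5}$-strengthened rerun of the contour-truncation argument of Proposition~\ref{p:linear-NS} (reduced to $\calP$ via (\ref{e:H-L})--(\ref{e:alpha-bound})), the step using Lemmas~\ref{l:differentiation}, \ref{l:multiplication} and Proposition~\ref{projection-P} to compute $\IB\vec u_m(s)$, and the endpoint singularities of the Duhamel integral handled by the $\IA^{17/20}e^{-(t-s)\IA}\IA^{-1/4}$ splitting with computable, $m$-dependent a priori constants $M_{\beta,m}s^{-\beta}$ obtained recursively (the paper's Claims I and II). The only cosmetic difference is that you suggest obtaining $\|\IA^{1/2}\vec u_m(s)\|_2$ on the fly from the name and $\|\IA^{1/4}\vec u_m(s)\|_2$ by interpolation, whereas the paper derives the uniform-in-$s$ bounds purely from the recursion; either way the essential content coincides.
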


We emphasize that the lemma holds true for $t>0$ only. Also the
choice of $\alpha = 3/5$ is somewhat arbitrary; in fact, $\alpha$
can be selected to be any rational number strictly between
$\frac{1}{2}$ and $\frac{3}{4}$. The requirement $\alpha <
\frac{3}{4}$ guarantees that  $D(\IA^{\alpha})\subset
H^{2\alpha}_{2, 0}(\Omega)\times H^{2\alpha}_{2, 0}(\Omega)$ because
$2\alpha < 3/2$ (see (\ref{e:s-restriction})). The other condition
$\alpha > \frac{1}{2}$ ensures that Lemma \ref{l:multiplication} can
be applied for $2\alpha > 1$. \\

\begin{proof}
We induct on $m$. Note that for any $t>0$ and any $a\in
L^{\sigma}_{2, 0}(\Omega)$, the estimates (\ref{e:H-L}) and
(\ref{e:alpha-bound}) imply that
\[ \| e^{-t\IA}a\|_{H^{2\alpha}_{2}}\leq C\|
\IA^{\alpha}e^{-t\IA}a\|_{2}\leq CC_{\alpha}t^{-\alpha}\| a\|_{2}
\]
Combining this inequality with the following strengthened version of
(\ref{H^1-norm}): for any $a\in \mathcal{P}$,
\[ \sum_{n, m\geq 1}(1+n^2+m^2)^{2}|a_{n,m}|^2<\infty \]
(the inequality is valid since $a$ is $C^{\infty}$), a similar
argument used to prove Proposition \ref{p:linear-NS} works for
$m=0$. Moreover, by type conversion (Fact~\ref{f:conversion}), $a\in
L^{\sigma}_{2, 0}(\Omega) \mapsto u_0\in C((0, \infty), H^{6/5}_{2,
0}(\Omega)\times H^{6/5}_{2, 0}(\Omega))$ is
$(\delta_{L^{\sigma}_{2, 0}}, [\rho \to
\delta_{H^{6/5}_{2, 0}}])$-computable. \\

Assume that $(j, a)\mapsto u_j$ is $(\nu, \delta_{L^{\sigma}_{2,
0}}, [\rho \to \delta_{H^{6/5}_{2, 0}}])$-computable for $0\leq
j\leq m$, where $a\in L^{\sigma}_{2, 0}(\Omega)$, and $u_j\in C((0,
\infty), (H^{6/5}_{2, 0}(\Omega))^2)$. We show how to compute a
$\deltaSOB{6/5}$-name for $u_{m+1}(t)= e^{-t\IA}a
-\int^t_0e^{-(t-s)\IA }\IB u_{m}(s)ds$ on inputs $m+1$, $a$ and
$t>0$. Let us first look into the nonlinear term $\mathbb{B}u_m$. It
is clear that $\IB u_m(s)$ lies in $L^{\sigma}_{2, 0}(\Omega)$ for
$s>0$. Moreover, it follows from Lemmas~\ref{l:differentiation} and
\ref{l:multiplication}, and Proposition \ref{projection-P} that the
map $(u_m, s)\mapsto \IB u_m(s)$ is
$([\myrho\myto\deltaSOB{2\alpha}], \myrho,
\deltaSOLZ{2})$-computable for all $s\in (0, t]$. Now since $\IB
u_m(s)$ is in $L^{\sigma}_{2, 0}(\Omega)$ for $s>0$, it follows from
the case where $m=0$ that $(u_m, s)\mapsto e^{-(t-s)\IA}\IB u_m(s)$
is $([\myrho\myto\deltaSOB{2\alpha}], \myrho,
\deltaSOB{6/5})$-computable for $0 < s < t$. \\

Next let us consider the integral $\int^t_0e^{-(t-s)\IA }\IB\vec
u_m(s)\,ds$; we wish to compute a $\delta_{H^{6/5}_{2, 0}}$-name of
the integral from $a$ and $t>0$. We make use of the following fact:
For $\theta\geq 1$, the integration operator from $C([a, b];
H^{\theta}_{2, 0}(\Omega)\times H^{\theta}_{2, 0}(\Omega))$ to
$H^{\theta}_{2, 0}(\Omega)\times H^{\theta}_{2, 0}(\Omega)$,
$F\mapsto \int^{b}_{a}F(t)(x)dt$, is computable from $a$, $b$, and
$F$. This fact can be proved by a similar argument as the one used
in the proof of Lemma 3.7 \cite{WeZh05}. However, since the function
$e^{-(t-s)\IA}\mathbb{B}u_m(s)$ is not necessarily in
$(H^{6/5}_{2}(\Omega))^2$ when $s=0$ or $s=t$, the stated fact
cannot be directly applied to the given integral. To overcome the
problem of possible singularities at the two endpoints, we use a
sequence of closed subintervals $[t_n, t-t_n]$ to approximate the
open interval $(0, t)$, where $t_n=t/2^n$, $n\geq 1$.  Then it
follows from the stated fact and the induction hypotheses that a
$\delta_{H^{6/5}_{2, 0}}$-name, say $\{ p_{n, K}\}$, of
$u^n_{m+1}(t)=e^{-t\IA}a
-\int^{t-t_n}_{t_n}e^{-(t-s)\IA}\mathbb{B}u_m(s)ds$ can be computed
from inputs $n$, $u_m$, and $t>0$, which satisfies the condition
that $\| u^n_{m+1}(t) - p_{n, K}\|_{H^{6/5}_{2}}\leq 2^{-K}$. Thus
if we can show that the integrals $\int_{0}^{t_n}e^{-(t-s)\IA}\IB
u_m(s)ds$ and $\int_{t-t_n}^{t}e^{-(t-s)\IA}\IB u_m(s)ds$ tend to
zero effectively in $\SOB{2}{6/5}\times \SOB{2}{6/5}$-norm as $n\to
\infty$, then we can effectively construct a $\deltaSOB{6/5}$-name
of $u_{m+1}(t)$ from $\{ p_{n, K}\}_{n, K}$. \\

It remains to show that both sequences of integrals tend to zero
effectively in $\SOB{2}{6/5}\times \SOB{2}{6/5}$-norm as $n\to
\infty$. Since a similar argument works for both sequences, it
suffices to show that the sequence $\mbox{Int}_n :=
\int_{0}^{t_n}e^{-(t-s)\IA}\IB u_m(s)ds$ tends to zero effectively
as $n\to \infty$. We are to make use of Fact \ref{f:A-alpha}-(1),
(2), (5) for showing
the effective convergence. The following two claims comprise the proof. \\

\noindent {\bf Claim I.}  Let $\beta=\frac{1}{2}$ or $\frac{1}{4}$.
Then the map $(a, t, m, \beta) \mapsto M_{\beta, m}$ is computable,
where $M_{\beta, m}$ is a positive number satisfying the condition
\begin{equation} \label{e:beta-m-bound}
\|\IA^{\beta}u_m(s)\|_2\leq M_{\beta, m}s^{-\beta} \quad \mbox{for
all $0<s<t$}
\end{equation}
(note that $M_{\beta, m}$ is independent of $s$). \\

\noindent {\bf Proof.} Again we induct on $m$. For $m=0$, let
$M_{\beta, 0}=C_{\beta}\| a\|_2$, where $C_{\beta}$ is the constant
in estimate (\ref{e:alpha-bound}) with $\alpha$ replaced by $\beta$
and $u$ by $a$. Then $M_{\beta, 0}$ is computable from $a$ and
$\beta$, and $\| \IA ^{\beta}u_0(s)\|_{2}\leq C_{\beta}s^{-\beta}\|
a\|_2=M_{\beta, 0}s^{-\beta}$ for any $s>0$. Assume that $M_{\beta,
k}$, $0\leq k\leq m$, has been computed from $k, \beta, a$, and
$t>0$. We show how to compute $M_{\beta, m+1}$. Since $u_{m+1}(s)$
has a singularity at $s=0$, it may not be in
$H^{2\beta}_{2}(\Omega)\times H^{2\beta}_{2}(\Omega)$ at $s=0$
(recall that $D(\IA^{1/2})=L^{\sigma}_{2, 0}(\Omega)\bigcap \{
\vec{u}\in H^1_2(\Omega)\times H^1_2(\Omega): \mbox{$\vec{u}=0$ on
$\partial \Omega$}\}$).  Let us first compute a bound (in
$L_2$-norm) for $\IA^{\beta}\int^{s}_{\epsilon}e^{-(t-r)\IA}\IB
u_{m}(r)dr$, where $0<\epsilon < s$. It follows  from the induction
hypothesis, Fact~\ref{f:A-alpha}-(1), (2), (5), and Theorems 6.8 and
6.13 in \cite{Pazy83} that
\begin{align} \label{epsilon-M-beta-m}
& \| \IA^{\beta} \int^{s}_{\epsilon}e^{-(s-r)\IA}\IB
u_{m}(r)dr\|_2 \nonumber \\
& =  \| \int^{s}_{\epsilon}\IA^{\beta
+1/4}e^{-(s-r)\IA}\IA^{-1/4}\IB
u_{m}(r)dr\|_2  \nonumber  \\
& \leq  C_{\beta +1/4}\int^{s}_{\epsilon}(s-r)^{-(\beta +1/4)}\|
\IA^{-1/4}\IB
u_{m}(r)\|_2dr \nonumber \\
& \leq  C_{\beta +1/4}M\int^{s}_{\epsilon}(s-r)^{-(\beta +1/4)}\|
\IA^{1/4}
u_{m}(r)\|_2\| \IA^{1/2}u_{m}(r)\|_2dr \nonumber \\
& \leq  C_{\beta +1/4}MM_{1/4, m}M_{1/2,
m}\int^{t}_{\epsilon}(s-r)^{-(\beta +1/4)}r^{-3/4}dr
\end{align}
Subsequently, we obtain that
\begin{align} \label{M-beta-m}
& \| \IA^{\beta}u_{m+1}(s)\|_{2} \nonumber \\
& =  \| \IA^{\beta}u_0(s) - \int^{s}_{0}\IA^{\beta}e^{-(s-r)\IA}\IB
u_{m}(r)dr \|_2 \nonumber \\
& \leq M_{\beta, 0}s^{-\beta} + \| \lim_{\epsilon \to
0}\int^{s}_{\epsilon}\IA^{\beta}e^{-(s-r)\IA}\IB u_{m}(r)dr \|_2
\nonumber \\
& \leq M_{\beta, 0}s^{-\beta} + C_{\beta +
\frac{1}{4}}MM_{\frac{1}{4}, m}M_{\frac{1}{2}, m}\int
^{s}_{0}(s-r)^{-(\beta+\frac{1}{4})}r^{-3/4}dr \nonumber \\
& = M_{\beta, 0}s^{-\beta} + C_{\beta + \frac{1}{4}}MM_{\frac{1}{4},
m}M_{\frac{1}{2}, m}B(\frac{3}{4}-\beta, \frac{1}{4})s^{-\beta}
\end{align}
where $B(\frac{3}{4}-\beta, 1/4)$ is the integral
$\int_{0}^{1}(1-\theta)^{(\frac{3}{4}-\beta)-1}\theta^{\frac{1}{4}-1}d\theta$,
which is the value of the Beta function $B(x, y)=\int ^1_0
(1-\theta)^{1-x}\theta^{1-y}d\theta$ at $x=\frac{3}{4}-\beta$ and
$y=\frac{1}{4}$. It is clear that $B(\frac{3}{4}-\beta, 1/4)$ is
computable. Thus if we set
\begin{equation} \label{e:M-beta-m+1}
M_{\beta, m+1}=M_{\beta, 0} + C_{\beta +
\frac{1}{4}}MM_{\frac{1}{4}, m}M_{\frac{1}{2},
m}B\left(\frac{3}{4}-\beta, \frac{1}{4}\right)
\end{equation}
then $M_{\beta, m+1}$ is computable and satisfies the condition that
$\| \IA^{\beta}u_{m+1}(s)\|_{2}\leq M_{\beta, m+1}s^{-\beta}$ for
all $0<s<t$. The proof of Claim I is complete. \\

\noindent {\bf Claim II.} $\left\| \int^{t_n}_{0}e^{-(t-s)\IA}\IB
u_m(s)ds \right\|_{\SOB{2}{6/5}} \to 0$ effectively as $n\to \infty$. \\

\noindent {\bf Proof.} Once again, to avoid singularity of $u_m(s)$
at $s=0$, we begin with the following estimate: Let $0<\epsilon <
t_n$. Then it follows from Fact \ref{f:A-alpha}-(1), (2), (5),
(\ref{e:beta-m-bound}), (\ref{e:M-beta-m+1}), and a similar
calculation as performed in Claim I that
\begin{eqnarray*}
& & \| \int^{t_n}_{\epsilon} e^{-(t-s)\IA}\IB u_{m}(s)ds
\|_{\SOB{2}{6/5}} \\
& \leq & C\| \IA^{3/5}\int^{t_n}_{\epsilon} e^{-(t-s)\IA}\IB
u_{m}(s)ds\|_2 \\
& \leq & CC_{17/20}MM_{\frac{1}{4}, m}M_{\frac{1}{2},
m}\int^{t_n}_{\epsilon}(t-s)^{-17/20}s^{-3/4}ds \\
& \leq & CC_{17/20}MM_{\frac{1}{4}, m}M_{\frac{1}{2},
m}(t-t_n)^{-17/20}\cdot 4(t_{n}^{1/4}-\epsilon^{1/4})
\end{eqnarray*}
which then implies that
\begin{eqnarray*}
& & \| \int^{t_n}_{0} e^{-(t-s)\IA}\IB u_{m}(s)ds
\|_{\SOB{2}{6/5}} \\
& = & \| \lim_{\epsilon \to 0}\int^{t_n}_{\epsilon} e^{-(t-s)\IA}\IB
u_{m}(s)ds
\|_{\SOB{2}{6/5}} \\
& \leq & \lim_{\epsilon \to 0}CC_{17/20}MM_{\frac{1}{4},
m}M_{\frac{1}{2}, m}(t-t_n)^{-17/20}\cdot
4(t_{n}^{1/4}-\epsilon^{1/4}) \\
& = & CC_{17/20}MM_{\frac{1}{4}, m}M_{\frac{1}{2},
m}(t-t_n)^{-17/20}\cdot 4t_{n}^{1/4}
\end{eqnarray*}
It is readily seen that $\int^{t_n}_{0}e^{-(t-s)\IA}\IB
u_m(s)ds\|_{\SOB{2}{6/5}} \to 0$ effectively as $n\to \infty$
(recall that $t_n=t/2^n$). The proof for the claim II, and thus for
the lemma is now complete.
\end{proof}

\begin{remark} \label{epsilon-0} In our effort to compute an upper bound
for $\| \IA^{\beta}u_{m+1}(s)\|_2$, we start with the integral
$\int^{s}_{\epsilon}e^{-(s-r)\IA}\IB u_{m}(r)dr$ because the
integral might have a singularity at $0$; then we take the limit as
$\epsilon \to 0$ to get the desired estimate  (see computations of
(\ref{epsilon-M-beta-m}) and (\ref{M-beta-m})). The limit exists
because the bound, $C_{\beta + \frac{1}{4}}MM_{\frac{1}{4},
m}M_{\frac{1}{2}, m}B\left(\frac{3}{4}-\beta, \frac{1}{4}\right)$,
is uniform in $r$ for $0<r\leq s$. In the rest of the paper, we will
encounter several similar computations. In those later situations,
we will derive the estimates starting with $\int^t_0$ instead of
$\int^t_{\epsilon}$. There will be no loss in rigor because the
integral is uniformly bounded with respect to the integrating
variable, say $t$, for $t>0$.
\end{remark}

\begin{corollary} \label{c:H-alpha} For any $\vec a\in
L^{\sigma}_{2, 0}(\Omega)$ and $t>0$, let $\{ u_m(t)\}$ be the
sequence generated by the iteration scheme (\ref{e:iteration0})
based on $\vec a$. Then $u_m(t)\in Dom(\IA^{3/5}) \subset
Dom(\IA^{1/2}) \subset Dom(\IA^{1/4})$.
\end{corollary}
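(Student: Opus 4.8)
The plan is to reduce everything to a single induction on $m$ that runs parallel to the proof of Lemma~\ref{H-alpha}. The nested inclusions $D(\IA^{3/5})\subseteq D(\IA^{1/2})\subseteq D(\IA^{1/4})$ require no work: they are exactly Fact~\ref{f:A-alpha}-(3) applied with the pairs $(3/5,1/2)$ and $(1/2,1/4)$, since $3/5\geq 1/2\geq 1/4>0$. Hence the whole content of the corollary is the assertion that $\vec u_m(t)\in D(\IA^{3/5})$ for every $m$ and every $t>0$; equivalently, that $\|\IA^{3/5}\vec u_m(t)\|_2<\infty$. This is essentially already contained in the proof of Lemma~\ref{H-alpha}: Claim~I there establishes the bound~(\ref{e:beta-m-bound}) for $\beta\in\{1/4,1/2\}$, and the computation~(\ref{M-beta-m}) only uses that $\beta<3/4$ (so that the smoothing exponent $\beta+\tfrac14$ stays below $1$ and the Beta value $B(\tfrac34-\beta,\tfrac14)$ is finite). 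My first move is therefore simply to re-run that induction with the extra exponent $\beta=3/5$, which is legitimate since $3/5<3/4$.

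Concretely: for $m=0$, $\vec u_0(t)=e^{-t\IA}\vec a$ lies in $D(\IA^{3/5})$ because estimate~(\ref{e:alpha-bound}) gives $\|\IA^{3/5}e^{-t\IA}\vec a\|_2\leq C_{3/5}\,t^{-3/5}\|\vec a\|_2<\infty$. For the inductive step I assume $\vec u_m(s)\in D(\IA^{3/5})$ for all $s>0$; then by Fact~\ref{f:A-alpha}-(3) it is also in $D(\IA^{1/2})\cap D(\IA^{1/4})$, so the bounds $\|\IA^{1/4}\vec u_m(s)\|_2\le M_{1/4,m}s^{-1/4}$ and $\|\IA^{1/2}\vec u_m(s)\|_2\le M_{1/2,m}s^{-1/2}$ of~(\ref{e:beta-m-bound}) are available, and moreover $\vec u_m(s)$ satisfies the hypothesis of Fact~\ref{f:A-alpha}-(5). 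Writing $\vec u_{m+1}(t)=e^{-t\IA}\vec a-\int_0^t e^{-(t-s)\IA}\IB\vec u_m(s)\,ds$, the first term is in $D(\IA^{3/5})$ by the base case. For the integral I factor $\IA^{3/5}e^{-(t-s)\IA}\IB\vec u_m(s)=\IA^{17/20}e^{-(t-s)\IA}\big(\IA^{-1/4}\IB\vec u_m(s)\big)$ and combine~(\ref{e:alpha-bound}) with Fact~\ref{f:A-alpha}-(5) and~(\ref{e:beta-m-bound}) to obtain
\[
\big\|\IA^{3/5}e^{-(t-s)\IA}\IB\vec u_m(s)\big\|_2\;\le\;C_{17/20}\,M\,M_{1/4,m}\,M_{1/2,m}\,(t-s)^{-17/20}\,s^{-3/4}.
\]
The right-hand side is integrable on $(0,t)$ since both exponents $17/20$ and $3/4$ are $<1$, with $\int_0^t(t-s)^{-17/20}s^{-3/4}\,ds=B(\tfrac{3}{20},\tfrac14)\,t^{-3/5}<\infty$. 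Because $\IA^{3/5}$ is a closed operator and $s\mapsto\IA^{3/5}e^{-(t-s)\IA}\IB\vec u_m(s)$ is thus absolutely integrable on $(0,t)$, the integral $\int_0^t e^{-(t-s)\IA}\IB\vec u_m(s)\,ds$ belongs to $D(\IA^{3/5})$, and therefore so does $\vec u_{m+1}(t)$. This closes the induction, and together with Fact~\ref{f:A-alpha}-(3) it proves the corollary.

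The only delicate point — and I expect it to be the main (though mild) obstacle — is the integrable singularity of $\IB\vec u_m(s)$ at $s=0$: the product $\|\IA^{1/4}\vec u_m(s)\|_2\|\IA^{1/2}\vec u_m(s)\|_2$ only controls $\IA^{-1/4}\IB\vec u_m(s)$ by an $s^{-3/4}$ bound. Exactly as in Remark~\ref{epsilon-0}, this is handled by first integrating over $[\epsilon,t]$, noting that the resulting bound $C_{17/20}MM_{1/4,m}M_{1/2,m}B(\tfrac{3}{20},\tfrac14)t^{-3/5}$ is uniform in $\epsilon$, and then letting $\epsilon\to0$ while invoking the closedness of $\IA^{3/5}$; the joint condition $17/20<1$ and $3/4<1$ is precisely what keeps the limiting integral finite. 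No estimates beyond those already recorded in Fact~\ref{f:A-alpha} and established in the proof of Lemma~\ref{H-alpha} are needed.
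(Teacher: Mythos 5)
Your proof is correct and is essentially the argument the paper intends: the paper's own proof of the corollary is a one-line citation of Lemma~\ref{H-alpha} together with Fact~\ref{f:A-alpha}-(3), and the estimates you re-derive for the inductive step (the bound $C_{17/20}MM_{1/4,m}M_{1/2,m}(t-s)^{-17/20}s^{-3/4}$, the Beta-function integral with exponents $3/20$ and $1/4$, and the $\epsilon\to 0$ passage of Remark~\ref{epsilon-0}) are exactly those appearing in Claims I and II of that lemma's proof. Your version has the minor merit of concluding membership in $D(\IA^{3/5})$ directly via closedness of $\IA^{3/5}$ rather than passing through the $H^{6/5}_{2,0}$-valued statement of Lemma~\ref{H-alpha}, but the substance is the same.
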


\begin{proof} The corollary follows from Lemma \ref{H-alpha} and Fact~\ref{f:A-alpha}-(3).
\end{proof}

\begin{corollary} \label{c:A-alpha}
The map from $\calP$ to $\ELL{2}(\Omega)$, $\vec{u} \mapsto
\|\IA^{\alpha}\vec{u}\|_2$, is $(\deltaSOLZ{2}, \rho)$-computable,
where $\alpha=1/8, 1/4$, or $1/2$.
\end{corollary}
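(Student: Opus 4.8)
The plan is to reduce, for each of the three values of $\alpha$, the computation of the scalar $\|\IA^{\alpha}\vec u\|_{2}$ to the task of $\deltaSOLZ{2}$-computing the vector field $\IA^{\alpha}\vec u\in\SOLZ{2}(\Omega)$ itself; since $\SOLZ{2}(\Omega)$ is a computable metric space (Section~\ref{s:Represent}), its $L_{2}$-norm is then computable. The feature to exploit is that an element $\vec u=\gamma_{n}\ast\Trim_{k}\vec\poly\in\calP$ is an \emph{explicit} $\Cinfty$ vector field, compactly supported in $\Omega$ and divergence-free (Lemma~\ref{l:convolution}); in particular it lies in $D(\IA)$ (indeed in $D(\IA^{N})$ for every $N$), so the unbounded operator $\IA$ can be applied to it exactly, which is precisely what makes the otherwise $L_{2}$-discontinuous quantity $\|\IA^{\alpha}\vec u\|_{2}$ accessible from computer-accessible data.

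The case $\alpha=\tfrac{1}{2}$ is the cleanest and I would treat it separately. Since $\IA$ is positively self-adjoint and $\vec u\in D(\IA)$, one has $\|\IA^{1/2}\vec u\|_{2}^{2}=\langle\IA\vec u,\vec u\rangle$. For $\vec u\in\calP$ the field $\Laplace\vec u$ is again $\Cinfty$, compactly supported in $\Omega$, and divergence-free (as $\divergence\Laplace\vec u=\Laplace\divergence\vec u=0$), hence $\Helmholtz\Laplace\vec u=\Laplace\vec u$ and therefore $\IA\vec u=-\Laplace\vec u$; integration by parts, with no boundary terms because $\vec u$ has compact support, then gives $\langle\IA\vec u,\vec u\rangle=\sum_{i}\|\nabla u_{i}\|_{2}^{2}$. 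Both $\Laplace\vec u$ and $\nabla\vec u$ are obtained from the representing data of $\vec u$ by differentiating the mollified polynomial (using $\partial(\gamma\ast q)=(\partial\gamma)\ast q$, as in Lemma~\ref{l:differentiation}), so they are computable continuous functions on $\overline\Omega$, and the $L_{2}$-norm of a computable continuous function on $\overline\Omega$ is computable \cite[\S6.4]{Weih00}.

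For $\alpha\in\{\tfrac{1}{8},\tfrac{1}{4}\}$ — in fact for any rational $\alpha\in(0,1)$ — I would use the subordination identity
\[
\IA^{\alpha}\vec u \;=\; \IA^{-(1-\alpha)}(\IA\vec u)\;=\;\frac{1}{\Gamma(1-\alpha)}\int_{0}^{\infty}t^{-\alpha}\,e^{-t\IA}\big(-\Laplace\vec u\big)\,dt ,
\]
which is valid because $\vec u\in D(\IA)$ and $\IA\vec u=-\Laplace\vec u\in\SOLZ{2}(\Omega)$. One first computes the $\deltaSOLZ{2}$-element $-\Laplace\vec u$ from the data of $\vec u$; then by Proposition~\ref{p:linear-NS} together with Fact~\ref{f:conversion} the integrand $t\mapsto\vec g(t):=t^{-\alpha}e^{-t\IA}(-\Laplace\vec u)$ is $[\myrho\myto\deltaSOLZ{2}]$-computable for $t>0$. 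The improper integral is split as $(0,\delta]\cup[\delta,R]\cup[R,\infty)$. On $(0,\delta]$ one has $\|\vec g(t)\|_{2}\le t^{-\alpha}\|\Laplace\vec u\|_{2}$ by Fact~\ref{f:A-alpha}-(2) with $C_{0}=1$, so that piece contributes at most $\tfrac{\delta^{1-\alpha}}{1-\alpha}\|\Laplace\vec u\|_{2}$; on $[R,\infty)$ one has $\|\vec g(t)\|_{2}=t^{-\alpha}\|\IA e^{-t\IA}\vec u\|_{2}\le C_{1}t^{-1-\alpha}\|\vec u\|_{2}$ by Fact~\ref{f:A-alpha}-(2) with exponent $1$, so that piece contributes at most $\tfrac{C_{1}}{\alpha}R^{-\alpha}\|\vec u\|_{2}$ — both are explicitly computable bounds (the constants $C_{0},C_{1}$ are computable, cf.\ the remark after Fact~\ref{f:A-alpha}) that can be driven below any prescribed $2^{-K}$ by choosing $\delta$ small and $R$ large. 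On the compact middle piece $[\delta,R]$ the map $t\mapsto\vec g(t)$ is uniformly continuous with a modulus of continuity extractable from its $[\myrho\myto\deltaSOLZ{2}]$-name, so Riemann sums converge effectively in $\SOLZ{2}(\Omega)$ (by the argument behind Lemma~3.7 of \cite{WeZh05}). Summing the three contributions and dividing by the computable number $\Gamma(1-\alpha)$ yields a $\deltaSOLZ{2}$-name of $\IA^{\alpha}\vec u$, and hence the value $\|\IA^{\alpha}\vec u\|_{2}$.

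The genuine obstacle — and the reason the statement cannot be established by feeding a bare $L_{2}$-approximation of $\vec u$ into a continuous routine — is that $\|\IA^{\alpha}\cdot\|_{2}$ fails to be $L_{2}$-continuous even on $\calP$; the algorithm must therefore use the explicit smooth form of the $\calP$-element so that $\IA\vec u=-\Laplace\vec u$ becomes a bona fide computable $\SOLZ{2}$-datum. Once that is in hand, the only point requiring care is the improper integral: the singularity $t^{-\alpha}$ at the origin is integrable because $\alpha<1$, and the behaviour at infinity is controlled by the smoothing estimate $\|\IA e^{-t\IA}\vec u\|_{2}\le C_{1}t^{-1}\|\vec u\|_{2}$; the remaining work (differentiating mollified polynomials, evaluating $\Gamma(1-\alpha)$ and Beta-type integrals, Riemann-sum approximation) is routine given the machinery already developed.
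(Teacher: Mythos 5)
Your proposal is correct, but it takes a genuinely different route from the paper. The paper's proof invokes the Balakrishnan resolvent formula of Fact~\ref{f:A-alpha}-(4), $\IA^{\alpha}\vec u=\tfrac{\sin\pi\alpha}{\pi}\int_0^\infty t^{\alpha-1}\IA(t\II+\IA)^{-1}\vec u\,dt$, notes as you do that $\IA\vec u=-\Helmholtz\Laplace\vec u=-\Laplace\vec u$ for $\vec u\in\calP$, and then expands each component in the exponential basis $\{e^{in\pi x}e^{im\pi y}\}$ so that the whole expression collapses to a Fourier multiplier: each coefficient $u^i_{n,m}$ gets multiplied by the explicitly computable convergent scalar integral $\int_0^\infty t^{\alpha-1}\lambda_{n,m}/(t+\lambda_{n,m})\,dt$ with $\lambda_{n,m}=(\pi n)^2+(\pi m)^2$; all three values of $\alpha$ are treated uniformly this way. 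You instead (i) dispatch $\alpha=1/2$ by the elementary quadratic-form identity $\|\IA^{1/2}\vec u\|_2^2=\langle\IA\vec u,\vec u\rangle=\sum_i\|\nabla u_i\|_2^2$, and (ii) for $\alpha\in\{1/8,1/4\}$ use the heat-semigroup subordination formula $\IA^{\alpha}\vec u=\Gamma(1-\alpha)^{-1}\int_0^\infty t^{-\alpha}e^{-t\IA}(-\Laplace\vec u)\,dt$, reusing the computability of $e^{-t\IA}$ from Proposition~\ref{p:linear-NS} and controlling the two improper ends with the bounds $C_0=1$ and $\|\IA e^{-t\IA}\vec u\|_2\le C_1t^{-1}\|\vec u\|_2$. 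Your version is more modular (it never needs the resolvent to act diagonally on the chosen basis, a step the paper leaves as a ``straightforward calculation'') and yields a full $\deltaSOLZ{2}$-name of $\IA^{\alpha}\vec u$ rather than just its norm, at the price of an explicit three-piece splitting of an improper integral and a Riemann-sum argument on the compact middle piece; the paper's version is shorter and entirely explicit once the Fourier diagonalization is accepted. Both arguments hinge on the same essential point, which you correctly identify: an element of $\calP$ is an exact smooth divergence-free datum, so $\IA\vec u=-\Laplace\vec u$ is itself a computable element of $\SOLZ{2}(\Omega)$ even though $\IA$ is unbounded.
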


\begin{proof} We prove the case when $\alpha = 1/4$; the other two
cases can be proved in exactly the same way. Since $\calP$ is
contained in the domain of $\IA$, it follows from Theorem 6.9,
Section 2.6 \cite{Pazy83} that for every $\vec{u}\in \calP$,
$\IA^{1/4}\vec{u}=\frac{\sin \pi/4}{\pi}\int^{\infty}_{0}t^{-3/4}\IA
(t\II+\IA)^{-1}\vec{u}dt$. By definition of $\calP$, if
$\vec{u}\in\calP$, then $\vec{u}$ is $\Cinfty$ with compact support
in $\Omega$, 
and $\IA \vec{u}=-\IP \Laplace \vec{u} = -\Laplace \vec{u}$. Express
each component of $\vec{u}=(u^1, u^2)$ in terms of the orthogonal
basis $\{ e^{in\pi x}e^{im\pi y}\}_{n,m}$ of $\ELL{2}(\Omega)$ in
the form of $u^i=\sum_{n, m\geq 0}u^i_{n, m}e^{i\pi nx}e^{i\pi my}$,
where $u^i_{n, m}=\int^1_{-1}\int_{-1}^{1}u^1(x, y)e^{i\pi
nx}e^{i\pi my}dxdy$. Then a straightforward calculation shows that
\begin{eqnarray*}
& & \frac{\sin \pi/4}{\pi}\int^{\infty}_{0}t^{-3/4}\IA
(t\II+\IA)^{-1}u^idt \\
& = & \frac{\sin \pi/4}{\pi}\sum_{n, m\geq 0}\left(
\int_{0}^{\infty}t^{-3/4}\frac{(\pi n)^2+(\pi m)^2}{t+(\pi n)^2+(\pi
m)^2}dt\right) u^i_{n, m}e^{i\pi nx}e^{i\pi my}
\end{eqnarray*}
Since the integral is convergent and computable, it follows that
$\IA^{1/4}\vec{u}$ is computable from $\vec{u}$ and, consequently,
$\| \IA\vec{u}\|_2$ is computable.
\end{proof}
\medskip

\begin{proof}[Proof of Proposition~\ref{p:convergence}]
For each $\vec{a}\in \SOLZ{2}$, let $\{\vec{a}_k\}$,
$\vec{a}_k=(a^1_k, a^2_k)\in \calP$, be a $\deltaSOLZ{2}$-name of
$\vec{a}$; i.e. $\| \vec{a}-\vec{a}_k\|_{2}\leq 2^{-k}$. Let
$\widetilde{C}:=c_1MB_1$, where $M$ is the constant in
Fact~\ref{f:A-alpha}(4), $c_1=\max\{C_{1/4}, C_{1/2}, C_{3/4}, 1\}$,
and $$B_1=\max\{ B(1/2, 1/4), B(1/4, 1/4), 1\}$$ with
$B(a,b)=\int_{0}^{1}(1-t)^{a-1}t^{b-1}dt$, $a, b>0$, being the beta
function. Then $M$ and $c_1$ are computable by assumption while
$B_1$ is computable for the beta functions with rational parameters
are computable. Note that $c_1B_1\geq 1$. Let
\begin{equation} \label{e:v-m}
v_m(t)=u_{m+1}(t)-u_m(t)=\int_{0}^{t}e^{-(t-s)\IA}(\IB u_m(s)-\IB
u_{m-1}(s))ds, \quad m\geq 1
\end{equation}

Our goal is to compute a constant $\epsilon$, $0<\epsilon<1$, such
that near $t=0$,
\begin{equation} \label{e:epsilon}
\| v_m(t)\|_2\leq L\epsilon^{m-1}
\end{equation}
where $L$ is a constant. Once this is accomplished, the proof is
complete.

It follows from Corollary \ref{c:H-alpha} that Fact
\ref{f:A-alpha}-(5) holds true for all $u_m(t)$ and $v_m(t)$ with
$t>0$. It is also known classically that
\begin{eqnarray} \label{v-m-bound}
&  & \| \IA^{-\frac{1}{4}}(\IB u_{m+1 }(t) - \IB u_{m}(t))\|_2 \nonumber \\
& = & \| \IA^{-\frac{1}{4}}\IB u_{m+1 }(t) - \IA^{-\frac{1}{4}}\IB u_{m}(t)\|_2 \nonumber \\
& \leq & M\left( \| \IA^{\frac{1}{4}}v_m(t)\|_2  \|
\IA^{\frac{1}{2}}u_{m+1}(t)\|_2 + \| \IA^{\frac{1}{4}}u_m(t)\|_2 \|
\IA^{\frac{1}{2}}v_m(t)\|_2 \right)
\end{eqnarray}
(see, for example,\cite{GiMi85}). The equality in the above estimate
holds true because $\IA ^{-1/4}$ is a (bounded) linear operator. The
estimate (\ref{v-m-bound}) indicates that, in order to achieve
(\ref{e:epsilon}), there is a need in establishing some bounds on
$\| \IA^{\beta}u_m(t)\|_2$ and $\| \IA^{\beta}v_m(t)\|_2$ which
become ever smaller as $m$ gets larger uniformly for values of $t$
near zero. The desired estimates are developed in a series of
claims beginning with the following one. \\

\noindent \textbf{Claim 1.} Let $\beta = \frac{1}{4}$ or
$\frac{1}{2}$; let
\[ \tilde{K}^{\vec{a}}_{\beta, 0}(T)=\max_{0\leq t\leq
T}t^{\beta}\|\IA^{\beta}e^{-t\IA}\vec{a}\|_{2} \] and
\[ k^{\vec a}_0(T)=\max\{ \tilde{K}^{\vec a}_{\frac{1}{4}, 0}(T), \tilde{K}^{\vec a}_{\frac{1}{2},
0}(T)\} \] Then there is a computable map from $L^{\sigma}_{2,
0}(\Omega)$ to $(0, 1)$, $\vec a \mapsto T_{\vec a}$, such that
\[ k^{\vec a}_0(T_{\vec a}) < \frac{1}{8\widetilde{C}} \]

\noindent \textbf{Proof.} First we note that
$t^{\beta}\|\IA^{\beta}e^{-t\IA}\vec{a}\|_{2} = 0$ for any $\vec
a\in L^{\sigma}_{2, 0}(\Omega)$ if $t=0$; cf. Theorem 6.1 in
\cite{Giga83a}. Furthermore, it follows from (\ref{C-0-I}) that the
operator norm of $e^{-t\mathbb{A}}$, $\| e^{-t\mathbb{A}}\|_{op}$,
is bound above by 1 for any $t>0$. Since $e^{-t\mathbb{A}}$ is the
identity map on $L^{\sigma}_{2, 0}(\Omega)$ when $t=0$, we conclude
that $\max_{0\leq t\leq T}\| e^{-t\mathbb{A}}\|_{op}\leq 1$ for any
$T>0$. Now for any $\vec a\in L^{\sigma}_{2, 0}(\Omega)$, it follows
from Fact ~\ref{f:A-alpha}-(2) and Theorems 6.8 and 6.13 in Section
2.6 of \cite{Pazy83} ($\IA ^{\alpha}$ and $e^{-t\IA}$ are
interchangeable) that
\begin{eqnarray*}
\tilde{K}^{\vec{a}}_{\beta, 0}(T) & = & \max_{0 \leq t \leq T} t
^\beta \|
\IA^\beta e^{- t\IA} \vec a\|_2  \\
& = & \sup_{0 \leq t \leq T} t ^\beta \| \IA^\beta e^{- t\IA} \vec
a\|_2  \\
& \leq & \sup_{0 < t \leq T} t ^\beta \| \IA^\beta e^{- t\IA} ( \vec
a - \vec a_k ) \|_2
+ \sup_{0 < t \leq T} t ^\beta \| \IA^\beta e^{- t\IA} \vec a_k\|_2  \\
& \leq & C_{\beta}\| \vec a - \vec a_k \|_2 + T^\beta \max_{0\leq
t\leq T}\| e^{-t\IA}\|_{op} \|
\IA^\beta \vec a_k \|_2  \\
& \leq & c_12^{-k }  + \max\{ T^{1/4}, T^{1/2}\} \max\{\|\IA^{1/4}
\vec a_k\|_2, \|\IA^{1/2} \vec a_k\|_2\}
\end{eqnarray*}
We note that although $\vec a$ is not necessarily in the domain of
$\IA$ but $\vec a_k\in \calP$ and $\calP$ is contained in the domain
of $\IA$; thus $\IA^{\beta}\vec a_k$ is well defined. Furthermore,
it follows from Corollary \ref{c:A-alpha} that $\|\IA^{\beta}\vec
a_k\|_2$ is computable. Clearly one can compute a positive integer
$\hat{k}$ such that
\[ 2^{-\hat{k}} < \frac{1}{16c_1\widetilde{C}} \]
then compute a positive number $T_{\vec a}$ such that
\[ \max\{ T^{1/4}_{\vec{a}}, T^{1/2}_{\vec{a}}\} \max\{\|\IA^{1/4} \vec a_{\tilde{k}}\|_2,
\|\IA^{1/2} \vec a_{\tilde{k}}\|_2\} < \frac{1}{16\widetilde{C}}
\]
The computations are performed on the inputs $\vec{a}$ and the
constants $c_1$, $M$, and $B_1$.  Consequently, $k^{\vec
a}_{0}(T_{\vec a}) < 1/(8\widetilde{C})$. The proof of Claim 1 is
complete. \\

We recall that, for a given $\vec a\in L^{\sigma}_{2, 0}(\Omega)$,
the iteration scheme (\ref{e:iteration0}) is based on the ``seed"
function $u_0(t)=e^{-t\IA}\vec a$. Claim 1 asserts that the seed
function has the property that $\max_{0\leq t\leq T_{\vec
a}}t^{\beta}\|\IA^{\beta}u_0(t)\|_2$ is bounded by $\tilde{K}^{\vec
a}_{\beta, 0}$, uniformly in $t$. We extend this property to the
iteration sequence $\{
u_m(t)\}$ in the next claim. \\

\noindent \textbf{Claim 2.} Let $\beta=\frac{1}{4}$ or
$\frac{1}{2}$. Then there is a computable map $\IN\times \SOLZ{2}
\to (0, \infty)$, $(m, \vec a)\mapsto K^{\vec a}_{\beta, m}$, such
that
\begin{equation} \label{K-beta-m}
\max_{0\leq t\leq T_{\vec a}}t^{\beta}\| \IA^{\beta}u_m(t)\|_2 \leq
K^{\vec a}_{\beta, m}
\end{equation}

\noindent \textbf{Proof.} We induct on $m$. For $m=0$, let $K^{\vec
a}_{\beta, 0}=1/(8\widetilde{C})$. Then (\ref{K-beta-m}) follows
from Claim 1. It is clear that $K^{\vec a}_{\beta, 0}$ is
computable.

For $m\geq 1$ and $t>0$, $K^{\vec a}_{\beta, m+1}$ is computed by
the recursive formula:
\begin{equation} \label{recursive-K}
K^{\vec a}_{\beta, m+1}=K^{\vec a}_{\beta, 0} + C_{\beta
+\frac{1}{4}}MB(1-\beta -\frac{1}{4}, \frac{1}{4})K^{\vec
a}_{\frac{1}{4}, m}K^{\vec a}_{\frac{1}{2}, m}
\end{equation}
The recursive formula is derived similarly as that of
(\ref{M-beta-m}).  Since the upper bound is uniformly valid for all
$0<t\leq T_{\vec{a}}$, it follows that it is also valid for
$t=0$. The proof of Claim 2 is complete. \\

In the next claim, we show that the sequences $\{ K^{\vec a}_{\beta,
m}\}$, $\beta=1/4$ or $1/2$, are bounded above with an upper bound
strictly less than $1/(2\widetilde{C})$. \\

\noindent \textbf{Claim 3.} Let $k^{\vec a}_m = \max\{ K^{\vec
a}_{\frac{1}{4}, m}, K^{\vec a}_{\frac{1}{2}, m}\}$ and let
$K=\frac{4k^{\vec a}_{0}(\sqrt{2}-1)}{\sqrt{2}}$. Then $k^{\vec
a}_{m}\leq K < \frac{1}{2\widetilde{C}}$ for all $m\geq 1$. \\

\noindent \textbf{Proof.} It follows from Claim 2 that $k^{\vec
a}_{0}=\frac{1}{8\widetilde{C}}$ and $k^{\vec a}_{m+1}\leq k^{\vec
a}_{0}+\widetilde{C}(k^{\vec a}_{m})^2$
(recall that $\widetilde{C}=c_1MB_1$). To get a bound on $k^{\vec
a}_{m}$, let's write $k^{\vec a}_{m}=k^{\vec a}_{0}w_{m}$. Then
$w_m$ satisfies the following inequality:
\[ k^{\vec a}_{0}w_{m+1}\leq k^{\vec a}_{0} + \widetilde{C}(k^{\vec a}_{0})^2
w^2_{m} \] which  implies that
\[ w_{m+1}\leq 1 + \widetilde{C}k^{\vec a}_{0}w^2_m = 1 + \frac{1}{8}w^2_m \]
Then a direct calculation shows that
\[ w_m \leq \frac{4(\sqrt{2}-1)}{\sqrt{2}} \]
Thus
\[ k^{\vec a}_{m}=k^{\vec a}_{0}w_{m}\leq \frac{4k^{\vec
a}_{0}(\sqrt{2}-1)}{\sqrt{2}} =
\frac{\sqrt{2}-1}{2\sqrt{2}\widetilde{C}} < \frac{1}{2\widetilde{C}}
\] And so if we pick $K=\frac{4k^{\vec
a}_{0}(\sqrt{2}-1)}{\sqrt{2}}$, then $k^{\vec a}_{m}\leq K <
\frac{1}{2\widetilde{C}}$ for all $m\geq 1$. The proof of Claim 3 is
complete. \\

Next we present an upper bound for $t^{\alpha}\|
\IA^{\alpha}v_m(t)\|_2$, $m\geq 1$. Recall that $v_m(t)=u_{m+1}(t) -
u_m(t)$. \\

\noindent \textbf{Claim 4.} For $t\in [0, T_{\vec a}]$, $0\leq
\alpha <\frac{3}{4}$, and $m\geq 1$,
\begin{equation} \label{v-m}
t^{\alpha}\|\IA^{\alpha}v_{m}(t)\|_2 \leq 2KC_{\alpha
+\frac{1}{4}}(2\tilde{C}K)^{m-1}B(1-\alpha -\frac{1}{4},
\frac{1}{4})
\end{equation}

\noindent \textbf{Proof.} First we observe that (\ref{v-m}) is true
for $t=0$. Next we assume that $0<t\leq T_{\vec a}$. Once again we
induct on $m$. At $m=1$: We recall from the definition of $c_1$ and
$B_1$ that $\frac{1}{2c_1B_1}\leq \frac{1}{2}$. Also it follows from
(\ref{v-m-bound}), Claims 2 and 3 that $\|
\IA^{\frac{1}{2}}u_1(t)\|_2 \leq K^{\vec a}_{\frac{1}{2},
1}t^{-\frac{1}{2}}\leq Kt^{-\frac{1}{2}}$, $\|
\IA^{\frac{1}{4}}u_0(t)\|_2 \leq Kt^{-\frac{1}{4}}$, $\|
\IA^{\frac{1}{4}}v_0(t)\|_2 \leq 2Kt^{-\frac{1}{4}}$, and $\|
\IA^{\frac{1}{2}}v_0(t)\|_2 \leq 2Kt^{-\frac{1}{2}}$. Making use of
these inequalities we obtain the following estimate:
\begin{align*}
  t^{\alpha} & \|  \IA^{\alpha}v_{1}(t)\|_2 \\
& =  t^{\alpha}\| \IA^{\alpha}(u_{2}(t)-u_1(t))\|_2
\\
& =  t^{\alpha}\left\| \IA^{\alpha}\int_{0}^{t}e^{-(t-s)\IA}(\IB u_1(s) - \IB u_0(s))ds\right\|_2 \\
& \leq t^{\alpha}C_{\alpha +\frac{1}{4}}\int_{0}^{t}(t-s)^{-\alpha
-\frac{1}{4}}\| \IA^{-\frac{1}{4}}\IB u_1(s) - \IA^{-\frac{1}{4}}\IB u_0(s)\|_2 ds \\
& \leq  t^{\alpha}C_{\alpha +\frac{1}{4}}\int_{0}^{t}(t-s)^{\alpha
-\frac{1}{4}}M\bigg ( \| \IA^{\frac{1}{4}}v_0(s)\|_2 \cdot \|
\IA^{\frac{1}{2}}u_{1}(s)\|_2 \\
&\qquad\qquad \qquad\qquad \qquad + \| \IA^{\frac{1}{4}}u_0(s)\|_2\cdot
\| \IA^{\frac{1}{2}}v_0(s)\|_2 \bigg ) ds \\
& \leq t^{\alpha}C_{\alpha
+\frac{1}{4}}M2K^2\int_{0}^{t}(t-s)^{-\alpha
-\frac{1}{4}}s^{-\frac{3}{4}}ds \\
& =  2KC_{\alpha +\frac{1}{4}}MKB(1-\alpha -\frac{1}{4}, \frac{1}{4}) \\
& <  2KC_{\alpha +\frac{1}{4}}\frac{M}{2c_1MB_1}B(1-\alpha
-\frac{1}{4}, \frac{1}{4}) \quad (\mbox{recall that $K<\frac{1}{2\widetilde{C}}=\frac{1}{2c_1MB_1}$}) \\
& <  2KC_{\alpha +\frac{1}{4}}(2\tilde{C}K)^0B(1-\alpha
-\frac{1}{4}, \frac{1}{4})
\end{align*}
Thus (\ref{v-m}) is true for $m=1$.

Now assuming that (\ref{v-m}) holds for all $1\leq j\leq m$, we show
that (\ref{v-m}) is also true for $m+1$. First it follows from
Claims 2 and 3, and the induction hypothesis that for any $s\in (0,
T_{\vec a})$,
\begin{align*}
\| \IA^{\frac{1}{4}} & v_{m}(s)\|_2 \cdot \|
\IA^{\frac{1}{2}}u_{m+1}(s)\|_2 \\
& \leq
2KC_{\frac{1}{4}+\frac{1}{4}}(2\widetilde{C}K)^{m-1}B(1-\frac{1}{4}-\frac{1}{4},
\frac{1}{4})s^{-\frac{1}{4}}\cdot Ks^{-\frac{1}{2}} \\
& \leq  2Kc_1(2\widetilde{C}K)^{m-1}B_1Ks^{-\frac{3}{4}}
\end{align*}
Similarly,
\[ \| \IA^{\frac{1}{2}}v_m(s)\|_2\cdot \|
\IA^{\frac{1}{4}}u_{m}(s)\|_2 \leq
2Kc_1(2\widetilde{C}K)^{m-1}B_1Ks^{-\frac{3}{4}}
\]
Thus,
\begin{align*}
& \| \IA^{\frac{1}{2}}u_{m+1}(s)\|_2\cdot \|
\IA^{\frac{1}{4}}v_{m}(s)\|_2 + \| \IA^{\frac{1}{2}}v_m(s)\|_2 \cdot
\|
\IA^{\frac{1}{4}}u_{m}(s)\|_2 \\
& \leq  2Kc_1(2\widetilde{C}K)^{m-1}B_1\cdot 2Ks^{-\frac{3}{4}}
\end{align*}
These inequalities imply the desired estimate:
\begin{align*}
 t^{\alpha}\| &  \IA^{\alpha}  v_{m+1}(t)\|_2 \\
& \leq  t^{\alpha}C_{\alpha +\frac{1}{4}}\int_{0}^{t}(t-s)^{-\alpha
-\frac{1}{4}}\| \IA^{-\frac{1}{4}}(\IB u_{m+1}(s)-\IB u_{m}(s))\|_2
ds
\\
& \leq  t^{\alpha}C_{\alpha +\frac{1}{4}}\int_{0}^{t}(t-s)^{-\alpha
-\frac{1}{4}}M \Big (\| \IA^{\frac{1}{2}}u_{m+1}(s)\|_2 \cdot \|
\IA^{\frac{1}{4}}v_{m}(s)\|_2 \\
& \qquad \qquad \qquad + \| \IA^{\frac{1}{2}}v_m(s)\|_2 \cdot \|
\IA^{\frac{1}{4}}u_{m}(s)\|_2 \Big )ds \\
& \leq  t^{\alpha}C_{\alpha +\frac{1}{4}}\int_{0}^{t}(t-s)^{-\alpha
-\frac{1}{4}}M\cdot 2Kc_{1}(2\widetilde{C}K)^{m-1}B_1\cdot
2Ks^{-\frac{3}{4}}ds
\\
& =   t^{\alpha}2KC_{\alpha +\frac{1}{4}}\cdot 2c_{1}MB_{1}K(2\widetilde{C}K)^{m-1}\int_{0}^{t}(t-s)^{-\alpha-\frac{1}{4}}s^{-\frac{3}{4}}ds  \\
& = 2KC_{\alpha +\frac{1}{4}}(2\widetilde{C}K)^{m}B(1-\alpha
-\frac{1}{4}, \frac{1}{4})
\end{align*}
The proof for Claim 4 is complete. \\

We now set $\alpha =0$,  $\epsilon = 2\widetilde{C}K$, and
$L=2KC_{\frac{1}{4}}B\left (\frac{3}{4}, \frac{1}{4}\right )$. Since
$K<\frac{1}{2\widetilde{C}}$ by Claim 3, it follows that $0<\epsilon
< 1$ and
\[ \| u_{m+1}(t)-u_m(t)\| \leq L\epsilon^{m-1} \]
Consequently, the iterated sequence $\{ u_m(t)\}$ converges
effectively to $u(t)$ and uniformly on $[0, T_{\vec a}]$.
\end{proof}

We mention in passing the following fact that can be proved by
similar computations of Claims 1 - 3: On input $(\vec{a}, m, n)$, a
positive number $T(\vec{a}, m, n)$ can be computed such that
$k^{\vec{a}}_0(T(\vec{a}, m, n)) < (8\tilde{C})^{-1}\cdot 2^{-n}$,
$T(\vec{a}, m, n+1) < T(\vec{a}, m, n)$, and $\max_{0\leq t\leq
T(\vec{a}, m, n)}t^{\beta}\| \IA^{\beta}u_m(t)\|_2\leq L^{\vec{a}}_{
\beta, m}\cdot 2^{-n}$, where $L^{\vec{a}}_{\beta, m}$ is a constant
independent of $t$ and $n$, and computable from $\vec{a}$ and $m$.

\subsection{Proof of Proposition~\ref{p:iteration0}} \label{ss:iteration0}

We now come to the proof of Proposition \ref{p:iteration0}. We need
to show that the map $\IS: \IN\times \SOLZ{2}\times [0, \infty) \to
\SOLZ{2}$, $(m, \vec{a}, t)\mapsto \vec{u}_m(t)$, is $(\nu\times
\deltaSOLZ{2}\times \myrho, \deltaSOLZ{2})$-computable. By a similar
argument as we used for proving Lemma \ref{H-alpha}, we are able to
compute $\vec{u}_m(t)$ on the input  $(m, \vec{a}, t)$, where $m\in
\mathbb{N}$, $\vec{a}\in L^{\sigma}_{2, 0}(\Omega)$, and $t>0$. We
note that $\vec{u}_m(0)=\vec{u}_0(0)=\vec{a}$ for all $m\in
\mathbb{N}$. Thus, to complete the proof, it suffices to show that
there is a modulus function $\eta: \mathbb{N}\times \mathbb{N} \to
\mathbb{N}$, computable from $\vec{a}$,  such that $\|
\vec{u}_{m+1}(t) - \vec{a}\|_2 \leq 2^{-k}$ whenever $0<t<2^{-\eta
(m+1, k)}$. Now for the details. Given $\vec{a}$ and $k$. Refereeing
to the last paragraph of the previous subsection and Fact
~\ref{f:A-alpha}-(2), (5), we obtain the following estimate: for
$0<t<T(\vec{a}, m, n)$
\begin{eqnarray*}
& & \left\| \int_{0}^{t}e^{-(t-s)\IA}\IB u_m(s)ds\right\|_2 \\
& = & \left\| \IA^{1/4}\int_{0}^{t}e^{-(t-s)\IA}\IA^{-1/4}\IB
u(s)ds\right\|_2 \\
& \leq & C_{1/4}M\int_{0}^{t}(t-s)^{-1/4}\| \IA^{1/4}u_m(s)\|_2\cdot \| \IA^{1/2}u_m(s)\|_2ds  \\
& \leq & C_{1/4}M\int^{t}_{0}(t-s)^{-1/4}\cdot s^{-1/4}\cdot
L^{\vec{a}}_{1/4, m}\cdot 2^{-n}\cdot s^{-1/2}\cdot
L^{\vec{a}}_{1/2, m}\cdot 2^{-n} ds \\
& \leq & C_{1/4}ML^{\vec{a}}_{1/4, m}L^{\vec{a}}_{1/2,
m}2^{-2n}\int^{t}_{0}(t-s)^{-1/4}s^{-3/4}ds \\
& = & C_{1/4}ML^{\vec{a}}_{1/4, m}L^{\vec{a}}_{1/2, m}B(3/4,
1/4)\cdot 2^{-2n}
\end{eqnarray*}
Thus if $\| e^{-t\IA}\vec{a} -
\vec{a}\|_2\leq 2^{-(k+1)}$ and $$2^{-2n}C_{1/4}ML^{\vec{a}}_{1/4,
m}L^{\vec{a}}_{1/2, m}B(3/4, 1/4)\leq 2^{-(k+1)}\, ,$$
then
\[ \| \vec{u}_{m+1}(t)-\vec{a}\|_2 \leq  \| e^{-t\IA}\vec{a} - \vec{a}\|_2+\left\|
\int_{0}^{t}e^{-(t-s)\IA}\IB \vec{u}_m(s)ds\right \|_{2} \leq 2^{-k}
\]
Since $e^{-t\IA}\vec{a}$ is computable in $t$ by Proposition
\ref{p:linear-NS} and $\vec{a}=e^{-0\IA}\vec{a}$, there is a
computable function $\theta_1: \IN\to \IN$ such that $\|
e^{-t\IA}\vec{a} - \vec{a}\|_2\leq 2^{-(k+1)}$ whenever $0 < t <
2^{-\theta_1(k)}$. Let $\theta_2: \IN\times \IN\to \IN$ be a
computable function satisfying $C_{1/4}ML^{\vec{a}}_{1/4,
m}L^{\vec{a}}_{1/2, m}B(3/4, 1/4)\cdot 2^{-2\theta_2(m, k)}\leq
2^{-(k+1)}$. Let $\eta (m+1, k)$ be a positive integer such that
$2^{-\eta(m+1, k)}\leq \min\{ 2^{-\theta_1(k)}, T(\vec{a}, m,
\theta_2(m, k))\}$. Then $\eta$
is the desired modulus function. The proof of Proposition \ref{p:iteration0} is complete. \\

Propositions \ref{p:convergence} and \ref{p:iteration0} show that
the solution $\vec u$ of the integral equation (\ref{e:integral-form})
is an effective limit of the computable iterated sequence $\{ {\vec
u}_m\}$ starting with ${\vec u}_0 = \vec a$ on $[0, T_{\vec a}]$;
consequently, $\vec u$ itself is also computable. Thus we obtain the
desired preliminary result:

\begin{theorem} There is a computable map
$T:\SOLZ{2}(\Omega)\to (0, \infty)$, $\vec{a}\mapsto T(\vec a)$, such that $\vec
u(t)$, the solution of the integral equation (\ref{e:integral-form}),
is computable in $\SOLZ{2}$ from $\vec a$ and $t$ for $\vec a\in \SOLZ{2}$ and
$t\in [0;T(\vec a)]$.
\end{theorem}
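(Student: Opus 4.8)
The plan is to read this theorem off as the effective‑limit synthesis of the two preceding propositions. I would set $T := \mathbb{T}$, the computable map $\SOLZ{2}(\Omega)\to(0,\infty)$ furnished by Proposition~\ref{p:convergence}, and then prove computability of the uncurried assignment $(\vec a,t)\mapsto\vec u(t)$ on the set $\{(\vec a,t):t\in[0;T(\vec a)]\}$ exactly as the statement requests; by Fact~\ref{f:conversion} and uniformity of the convergence in $t$ this also yields the curried reading $\vec a\mapsto\big(t\mapsto\vec u(t)\big)$ with values in $\C\big([0;T(\vec a)],\SOLZ{2}(\Omega)\big)$.

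First I would fix a $\deltaSOLZ{2}$‑name of $\vec a$ and a $\myrho$‑name of some $t\in[0;T(\vec a)]$. By Proposition~\ref{p:iteration0} the iteration map $\IS:(m,\vec a,t)\mapsto\vec u_m(t)$ of~(\ref{e:iteration0}) is $(\nu\times\deltaSOLZ{2}\times\myrho,\deltaSOLZ{2})$‑computable, so from $(m,\vec a,t)$ I can compute, for every target accuracy $j$, some $\poly\in\calP$ with $\|\vec u_m(t)-\poly\|_2\leq2^{-j}$; note that each $\vec u_m(t)$ lies in $\SOLZ{2}(\Omega)$, hence so does the limit, since $\SOLZ{2}(\Omega)$ is closed. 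Next I would invoke the quantitative conclusion obtained inside the proof of Proposition~\ref{p:convergence}: there are $L=L(\vec a)>0$ and $\epsilon=\epsilon(\vec a)\in(0,1)$, both computable from $\vec a$ and both independent of $t$, such that $\|\vec u_{m+1}(t)-\vec u_m(t)\|_2\leq L\epsilon^{m-1}$ for all $t\in[0;T(\vec a)]$; summing the geometric tail gives $\|\vec u(t)-\vec u_m(t)\|_2\leq L\epsilon^{m-1}/(1-\epsilon)$, where $\vec u$ is the (classically unique) solution of~(\ref{e:integral-form}).

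The algorithm is then the standard one for extracting an effective limit: on input accuracy $k$, first compute (using that $L,\epsilon$ are computable from $\vec a$) an index $m=m(\vec a,k)$ with $L\epsilon^{m-1}/(1-\epsilon)\leq2^{-(k+1)}$, then run the $\IS$‑algorithm of Proposition~\ref{p:iteration0} to produce $\poly_k\in\calP$ with $\|\vec u_m(t)-\poly_k\|_2\leq2^{-(k+1)}$; the triangle inequality yields $\|\vec u(t)-\poly_k\|_2\leq2^{-k}$, so $(\poly_k)_{k\in\IN}$ is a $\deltaSOLZ{2}$‑name of $\vec u(t)$ computed from the given names of $\vec a$ and $t$.

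I do not expect a genuinely hard step here: the real content is already packaged in Propositions~\ref{p:convergence} and~\ref{p:iteration0}. The one point requiring a little care — and the reason I would quote the bound $L\epsilon^{m-1}$ from within the proof of Proposition~\ref{p:convergence} rather than merely its statement — is to make sure the modulus of convergence is uniform in $t$ and depends computably on $\vec a$ alone; with that secured, synthesising a $\deltaSOLZ{2}$‑name of the limit from the computable approximating sequence is routine, and the output automatically lies in $\calP$ as required.
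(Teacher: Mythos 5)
Your proposal is correct and follows essentially the same route as the paper: the paper likewise obtains this theorem by combining Proposition~\ref{p:convergence} (computability of $T_{\vec a}$ together with the uniform, effective rate $\|\vec u_{m+1}(t)-\vec u_m(t)\|_2\leq L\epsilon^{m-1}$) with Proposition~\ref{p:iteration0} (computability of $(m,\vec a,t)\mapsto\vec u_m(t)$), concluding that the effective limit $\vec u(t)$ is computable. You merely spell out the standard limit-extraction argument that the paper leaves implicit.
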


\subsection{The Inhomogeneous Case and Pressure} \label{ss:final}
It is known \mycite{Theorem~2.3}{GiMi85} that, also in the
presence of an inhomogeneity $\vec g\in\C\big([0;T],\SOLZ{2}(\Omega)\big)$,
the iterate sequence~(\ref{e:iteration}) converges to a unique
solution $\vec u$ of Equation~(\ref{e:NS-E}) near $t=0$.
Similarly to (the proofs of) Propositions~\ref{p:iteration0},
\ref{p:convergence}, and \mycite{Lemma 3.7}{WeZh05},
this solution is seen to be computable.
Moreover, $\vec g=\Helmholtz\vec f$ is computable
from $\vec f\in\big(\ELL{2}(\Omega)\big)^2$
according to Proposition~\ref{projection-P}.
Finally the right-hand side of Equation~(\ref{e:gradient}) equals
\[
\big(\II-\IP\big)[\vec f + \Laplace\vec u - (\vec u\cdot\nabla)\vec u\big]
\;=:\; \vec h
\]
which, by the definition of $\IP$ projecting onto the solenoidal
subspace, is conservative (=rotation-free/a pure divergence). Hence
the path integral $\int_{\vec 0}^{\vec x} \vec h(\vec y)\cdot
d\vec\gamma(\vec y)$ does not depend on the chosen path from $\vec
0$ to $\vec x$ and well-defines $\Pressure(\vec x)$. This concludes
our proof of Theorem~\ref{t:Main}.

%
%
%

\begin{thebibliography}{8}

\bibitem{BCT12}
  {Beggs, E., Costa,  J.F., Tucker, J.V.}:
  Axiomatising Physical Experiments as Oracles to Algorithms.
  {Philosophical Transactions of the Royal Society A: Mathematical,
   Physical and Engineering Sciences} \textbf{370}, 3359--3384 (2012).

\bibitem{BoyerFabrie}
  {Boyer. F., Fabrie, P.}:
  {Mathematical Tools for the Study of the Incompressible Navier-Stokes Equations and Related Models}.
  Spring Applied Mathematical Sciences, (2013).

\bibitem{Giga83a}
  {Giga, Y.}:
  Weak and Strong Solutions of the Navier-Stokes Initial Value Problem.
  {Publ. RIMS, Kyoto Univ.} \textbf{19}, 887--910 (1983).

\bibitem{Giga83b}
  {Giga, Y.}:
  Time and spatial analyticity of solutions of the Navier-Stokes equations.
  {Comm. Partial Differential Equations} \textbf{8}, 929--948 (1983).

\bibitem{GiMi85}
  {Giga, Y., Miyakawa, T.}:
  Solutions in $L^r$ of the Navier-Stokes initial value problem.
  {Archive for Rational Mechanics and Analysis}
 \textbf{89} (3), 267--281  (1985).

\bibitem{GiRa86}
  {Girault, V., Raviart, P.~A.}:
  {Finite Element Methods for Navier-Stokes Equations}.
  Springer Series in Computational Mathematics \textbf{5}. Springer, New York (1986).

\bibitem{PoissonPDE}
{Kawamura, A., Steinberg, F., Ziegler, M.}:
  Complexity of Laplace's and Poisson's equation. abstract in:
  {Bulletin of Symbolic Logic} \textbf{20}(2), 231 (2014).
  full version to appear in {Mathem. Structures in Computer Science} (2016).

\bibitem{LaVa89}
  {Landriani, G.S., Vandeven, H.}:
  Polynomial approximation of divergence-free functions.
  {Math. Comput.} \textbf{52}, 103--130 (1989).

\bibitem{McL00}
  {Mclean, W.}: {Strongly Elliptic Systems and Boundary
 Integral Equations}. Cambridge University Press, London (2000).

\bibitem{Pazy83}
  {Pazy, A.}:   {Semigroups of Linear Operators and
 Applications to Partial Differential Equations}. Springer-Verlag, New York
 (1983).

\bibitem{PERi81}
   {Pour-El, M.B., Richards, J.I.}:  The wave equation
  with computable initial data such that its unique solution
  is not computable.   {Advances in Mathematics}
   \textbf{39}(4), 215--239 (1981).

\bibitem{PERi89}
   {Pour-El, M.B., Richards, J.I.}:   {Computability in
  Analysis and Physics}. Springer, New York (1989).

\bibitem{PEZh86}
   {Pour-El, M.B., Zhong, N.}:
   The wave equation with computable initial data whose unique solution is nowhere computable.
   {Math. Logic Quarterly}  \textbf{43}(4), 499--509  (1997).

\bibitem{PMC85}
   {Patel, M.K.,  Markatos, N.C., Cross, M.}:
   A critical evaluation of seven discretization schemes for convection-diffusion equations.
   {International Journal for Numerical Methods in Fluids}
   \textbf{5}(3), 225--244  (1985).

\bibitem{Presser}
   {Brattka, V., Presser, G.}:
   Computability on subsets of metric spaces.
     {Theoretical Computer Science}
   \textbf{305}, 43--76 (2003).

\bibitem{Smit03}
   {Smith, W.D.}:  On the uncomputability of hydrodynamics.
  NEC preprint (2003).

\bibitem{Soar96}
   {Soare, R.I.}:  Computability and recursion.
    {Bulletin of Symbolic Logic}  \textbf{2}, 284--321 (1996).

\bibitem{Sohr}
   {Sohr, H.}:  {The Navier-Stokes Equations:
  An Elementary Functional Analytic Approach}.
  Birkh\"{a}user Advanced Texts. Birkh\"{a}user, New York (2001).

\bibitem{CiE2015}
   {Sun, S.M., Zhong, N., Ziegler, M.}:
   Computability of Navier-Stokes equation.
   in: {Proc. 11th Conf. on Computability in Europe}.
  Springer LNCS  \textbf{9136}. Springer, New York (2015).

\bibitem{Tao14}
    {Tao, T.}:  Finite time blowup for an averaged
   three-dimensional Navier-Stokes equation.
    {Journal of the American Mathematical Society}  {\bf 29}, 601-674 (2016).

\bibitem{Tema77}
    {Temam, R.}:  {Navier-Stokes Equations: Theory and
   Numerical Analysis}. North-Holland Publishing Company, New York (1977).

\bibitem{Weih00}
   {Weihrauch, K.}:  {Computable Analysis: an Introduction}.
  Springer, New York (2000).

\bibitem{WeZh02}
   {Weihrauch, K.,  Zhong, N.}:
   Is wave propagation computable or can wave computers beat the Turing machine?.
    {Proc. London Mathematical Society}  \textbf{85}(2), 312--332  (2002).


\bibitem{WeZh05}
   {Weihrauch, K.,  Zhong, N.}:
   Computing the solution of the Korteweg-de Vries equation with arbitrary precision on Turing machines.
    {Theoretical Computer Science}  \textbf{332}, 337--366  (2005).

\bibitem{WeZh06}
   {Weihrauch, K.,  Zhong, N.}:
   Computing Schr\"{o}dinger propagators on Type-2 Turing machines.
    {Journal of Complexity}  \textbf{22}(6), 918--935  (2006).

\bibitem{WeZh07}
   {Weihrauch, K.,  Zhong, N.}:
   Computable analysis of the abstract Cauchy problem in Banach spaces and its applications I.
    {Mathematical Logic Quarterly}  \textbf{53}, 511--531 (2007).

\bibitem{Wieg99}
   {Wiegner, M.}:  The Navier-Stokes equations -- a never-ending challenge?.
    {Jahresbericht der Deutschen Mathematiker Vereinigung} (DMV)
   \textbf{101}(1), 1--25 (1999).


\bibitem{Zhon99}
   {Zhong, N. }:
   Computability structure of the Sobolev spaces and its applications.
    {Theoretical Computer Science}
   \textbf{219}, 487--510  (1999).

\bibitem{ZiBr04}
   {Ziegler, M., Brattka, V.}:
   Computability in linear algebra. {Theoretical Computer Science}
   \textbf{326}, 187--211 (2004).

\bibitem{Zieg09}
   {Ziegler, M.}:  Physically-relativized Church-Turing hypotheses:
  Physical foundations of computing and complexity theory of computational physics.
    {Applied Mathematics and Computation}  \textbf{215}(4), 1431-1447  (2009).

\end{thebibliography}
%

\begin{appendix}
\section{Proof of Proposition~\ref{polynomial}} \label{a:polynomial}
(a)~ For a divergence-free and boundary-free polynomial, its
coefficients must satisfy a system of linear equations. In the
following, we derive explicitly this system of linear equations in
the 2-dimensional case, i.e. $\Omega = (-1, 1)^2$. Let
$\vec\poly=(\poly_1, \poly_2)=\big(\sum_{i, j=0}^{N}a^1_{i,
j}x^iy^j, \sum_{i, j=0}^{N}a^2_{i, j}x^iy^j\big)$ be a
divergence-free and boundary-free polynomial of real coefficients.
(If the degree of $\poly_1$ or $\poly_2$ is less than $N$, then
zeros are placed for the coefficients of missing terms.) Then, by
definition,
\begin{eqnarray*}
\divergence \vec\poly & = & \frac{\partial \poly_1}{\partial x}+\frac{\partial
\poly_2}{\partial y} \\
& = & \sum_{1\leq i\leq N, 0\leq
\leq N}ia^1_{i, j}x^{i-1}y^j +
\sum_{0\leq i\leq N, 1\leq
\leq N}ja^2_{i, j}x^{i}y^{j-1} \\
& = & \sum_{0\leq i, j\leq N-1}[(i+1)a^1_{i+1, j}+(j+1)a^2_{i,
j+1}]x^{i}y^j + \\
& & + \sum_{0\leq i\leq N-1}(i+1)a^1_{i+1, N}x^iy^N + \sum_{0\leq
j\leq N-1}(j+1)a^2_{N, j+1}x^Ny^j \\
& \equiv & 0 \quad \mbox{on $\Omega$}
\end{eqnarray*}
which implies that all coefficients in $\divergence \vec\poly$ must
be zero; or equivalently,  Equation~(\ref{div}) holds true. Turning
to the boundary conditions, along the line $x=1$, since
\[ \vec\poly (1, y) = \big(\sum\nolimits_{j=0}^N(\sum\nolimits_{i=0}^N a^1_{i, j})y^j,
\sum\nolimits_{j=0}^N(\sum\nolimits_{i=0}^N a^2_{i, j})y^j \big)\]
is identically zero, it follows that $\sum_{i=0}^N a^1_{i,
j}=\sum_{i=0}^N a^2_{i, j}=0$ for $0\leq j\leq N$. There are similar
types of restrictions on the coefficients of $\vec\poly$ along the
lines $x=-1$, $y=1$, and $y=-1$. In summary, $\vec\poly$ vanishes on
$\partial\Omega$ if and only if for all $0\leq j,i\leq N$, both
(\ref{boundary-1}) and (\ref{boundary-2}) hold true.

In the 3-dimensional case, a similar calculation shows that a
polynomial triple $\vec\poly(x, y, z)=\big(\poly_1(x, y, z),
\poly_2(x, y, z), \poly_3(x, y, z)\big)$ is divergence-free and
boundary-free if and only if its coefficients satisfies a system of
linear equations with integer coefficients.


(b)~ In \cite{LaVa89} it is shown that for any real number $s\geq 3$
and for any function $\vec w\in \mathcal{N}_{div}^s\cap
\SOB{2,0}{1,\sigma}(\Omega)^d$, the following holds:
\[ \inf_{\vec\poly\in \mathcal{N}^1_{\text{div}}\bigcap \calP_N^0(\Omega)^d}\|\vec w-\vec\poly\|_{\SOB{2}{s}(\Omega)^d}\leq
CN^{-2}\|\vec w\|_{\SOB{2}{s}(\Omega)^d} \]
where $\Omega=(-1,
1)^d$,
$$
\mathcal{N}_{\text{div}}^s =\{ \vec w\in
\SOB{2}{s}(\Omega)^d \, | \, \divergence  \vec{w}=0\},\quad
\calP_N^0(\Omega)=\calP_N(\Omega)\bigcap
\SOB{2,0}{1,\sigma}(\Omega),$$
$\calP_N$ is the set of all $d$-tuples
of real polynomials with $d$ variables and degree less than or equal
to $N$ with respect to each variable, $\SOB{2,0}{1,\sigma}(\Omega)$
is the closure in $\SOB{2}{1}(\Omega)$ of $\Cinfty_0(\Omega)$, and
$C$ is a constant independent of $N$. This estimate implies that
every function $\vec w\in \SOLZ{\p}$ can be approximated with
arbitrary precision by divergence-free and boundary-free real
polynomials as follows: for any $n\in\IN$, since $\{ \vec u\in
\Cinfty_0(\Omega)^d \, : \, \divergence  \vec u=0\}$ is dense in
$\SOLZ{\p}$, there is a divergence-free $\Cinfty$ function $\vec u$
with compact support in $\Omega$  such that $\|\vec w-\vec
u\|_{\ELL{\p}}\leq 2^{-(n+1)}$. Then it follows from the above
inequality that there exists a positive integer $N$ and a
divergence-free and boundary-free polynomial $\vec\poly$ of degree
$N$ with real coefficients such that $\|\vec u - \vec
\poly\|_{\ELL{\p}}\leq\|\vec u-\vec\poly\|_{H^3(\Omega)^d}\leq
2^{-(n+1)}$. Consequently,
$\|\vec w-\vec \poly\|_{\ELL{\p}}\leq\|\vec w-\vec u\|_{\ELL{\p}}+\|\vec u-\vec \poly\|_{\ELL{\p}}\leq 2^{-n}$. \\

It remains to show that $\SOLPZ[\mathbb{R}^2]$, the divergence-free
and boundary-free  polynomial tuples with \emph{rational}
coefficients, is dense (in $\ELL{\p}$-norm) in the set of all
polynomial tuples with \emph{real} coefficients which are
divergence-free on $\Omega$ and boundary-free on $\partial \Omega$.
To this end we note that, according to part (a), the divergence-free
and boundary-free polynomials can be characterized, independent of
their coefficient field, in terms of a homogeneous system of linear
equations with integer coefficients. Then it follows from the lemma
below that the set of the rational solutions of this system is dense
in the set of its real solutions. And since $\Omega$ is bounded
(=relatively compact), the approximations to its coefficients of a
polynomial yields (actually uniform) the approximations to the
polynomial itself:
\[ \sup_{\vec x\in\Omega} |\poly_k(\vec x)| \;\leq\; \sum_{i, j=0}^{N} |a^k_{i,j}|\cdot M^{i+j}
\quad\text{ for } \Omega\subseteq[-M,+M]^2 \quad\text{and } k=1,2
\]
\begin{lemma}
Let $A\in\IQ^{m\times n}$ be a rational matrix.
Then the set $\operatorname{kernel}_{IQ}(A):=\{\vec x\in\IQ^n:A\cdot\vec x=\vec 0\}$ of
rational solutions to the homogeneous system of linear equations given by $A$
is dense in the set $\operatorname{kernel}_{\IR}(A)$ of real solutions.
\end{lemma}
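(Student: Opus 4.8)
The plan is to prove the lemma by Gaussian elimination, exploiting that row reduction of a rational matrix stays rational and produces a rational basis of the real kernel.

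First I would bring $A$ into reduced row echelon form $R$ over $\IQ$. Every elementary row operation --- swapping two rows, scaling a row by a nonzero rational, adding a rational multiple of one row to another --- sends rational matrices to rational matrices, so $R\in\IQ^{m\times n}$; and since such operations do not alter the solution set of a homogeneous system, $\operatorname{kernel}_{\IR}(R)=\operatorname{kernel}_{\IR}(A)$ and $\operatorname{kernel}_{\IQ}(R)=\operatorname{kernel}_{\IQ}(A)$. Hence it suffices to treat $R$ in place of $A$.

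Next I would extract the textbook basis of $\operatorname{kernel}(R)$. Let $J\subseteq\{1,\dots,n\}$ index the non-pivot (``free'') columns of $R$. For each $j\in J$ let $\vec b_j$ be the vector whose $j$-th coordinate is $1$, whose other free coordinates are $0$, and whose pivot coordinates are obtained by solving the pivot rows of $R$; because $R$ is rational, $\vec b_j\in\IQ^n$, and by construction $R\vec b_j=\vec 0$. Standard linear algebra shows $\{\vec b_j:j\in J\}$ is a basis of $\operatorname{kernel}_{\IR}(R)$: linear independence is visible in the free coordinates, and for any $\vec x\in\operatorname{kernel}_{\IR}(R)$ the vector $\vec x-\sum_{j\in J}x_j\vec b_j$ again lies in the kernel and has all its free coordinates equal to $0$, hence vanishes.

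Finally, given $\vec x\in\operatorname{kernel}_{\IR}(A)$ and $\varepsilon>0$, I would write $\vec x=\sum_{j\in J}x_j\vec b_j$ and pick rationals $q_j$ with $|x_j-q_j|<\varepsilon\big/\big(1+\sum_{j\in J}\|\vec b_j\|\big)$. Then $\vec y:=\sum_{j\in J}q_j\vec b_j\in\IQ^n$ satisfies $A\vec y=\sum_{j\in J}q_j\,A\vec b_j=\vec 0$, so $\vec y\in\operatorname{kernel}_{\IQ}(A)$, while $\|\vec x-\vec y\|\le\sum_{j\in J}|x_j-q_j|\,\|\vec b_j\|<\varepsilon$, giving the density. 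The argument is entirely routine and I see no genuine obstacle; the only points deserving an explicit word are that Gaussian elimination keeps all entries in $\IQ$ and leaves the solution set unchanged --- everything after that is just the standard description of the kernel via reduced row echelon form.
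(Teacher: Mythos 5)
Your proposal is correct and follows essentially the same route as the paper's own proof: Gaussian elimination over $\IQ$ yields a rational basis of the real kernel, and rational linear combinations of that basis are dense in the real ones. You merely spell out in more detail the free-column basis construction and the $\varepsilon$-approximation step that the paper dismisses as ``obvious.''
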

\begin{proof}
For $d:=\operatorname{dim}\big(\operatorname{kernel}_{\IR}(A)\big)$,
Gaussian Elimination yields a basis $B=(\vec b^1,\ldots,\vec b^d)$
of $\operatorname{kernel}(A)$; in fact it holds $B\in\IQ^{n\times d}$ and
\[ \operatorname{kernel}_{\IF}(A) \;=\; \operatorname{image}_{\IF}(B)
 \;:=\; \big\{\lambda_1\vec b^1+\cdots+\lambda_d\vec b^d:\lambda_1,\ldots,\lambda_d\in\IF\big\}  \]
for \emph{every} field $\IF\supseteq\IQ$: Observe that the
elementary row operations Gaussian Elimination employs to transform
$A$ into echelon form containing said basis $B$ consist only of
arithmetic (=field) operations! (We deliberately do not require $B$
to be orthonormal; cf. \cite[\S3]{ZiBr04}.) Now
$\operatorname{image}_{\IQ}(B)$ is obviously dense in
$\operatorname{image}_{\IR}(B)$.
\end{proof}

\section{Proof of Lemma~\ref{l:convolution}} \label{a:convolution}
Note that $\gamma_n\ast\Trim_k\vec\poly = (\gamma_n\ast
\Trim_k\poly_1, \gamma_n\ast \Trim_k\poly_2)$.  For each
$\vec\poly\in\SOLPZ[\mathbb{R}^2]$ and $n\geq k$,  since
\begin{align} \label{gamma_k}
\frac{\partial (\gamma_n\ast\Trim_k\poly_1)}{\partial x}& (x, y)  =
\frac{\partial}{\partial x}\int^{1}_{-1}\int^{1}_{-1}\gamma_n(x-s, y-t)\cdot\Trim_k\poly_1(s, t)\,ds\,dt \nonumber \\
& =  \int^{1-2^{-k}}_{-1+2^{-k}}\left[ \int^{1-2^{-k}}_{-1+2^{-k}}\frac{\partial \gamma_n}{\partial x}(x-s, y-t)\cdot
\Trim_k\poly_1(s, t)\,ds\right] \,dt \nonumber \\
& =  \int^{1-2^{-k}}_{-1+2^{-k}}\left[
\int^{1-2^{-k}}_{-1+2^{-k}}-\frac{\partial \gamma_n}{\partial
s}(x-s, y-t)\cdot\Trim_k\poly_1(s, t)\,ds\right] \,dt
\end{align}
for $\Trim_k\poly_1=0$ in the exterior region of $\Omega_k$
including its boundary $\partial\Omega_k$. Note that $\frac{\partial
\gamma_n}{\partial s}$ is continuous on $\IR^2$;
$\frac{\partial\gamma_n}{\partial s}(x-s, y-t)\cdot\Trim_k\poly_1(s,
t)$ is continuous on $[-1, 1]^2$ for any given $x, y\in \IR$;
$\frac{\partial \Trim_k\poly_1}{\partial s}(s, t)$ is continuous in
$(-1+2^{-n}, 1-2^{-n})$  and $\Trim_k\poly_1$ is continuous on
$[-1+2^{-n}, 1-2^{-n}]$ for any given $t\in [-1; 1]$. Thus, we can
apply the integration by parts formula to the integral
$$\int^{1-2^{-k}}_{-1+2^{-k}}-\frac{\partial \gamma_n}{\partial
s}(x-s, y-t)\cdot\Trim_k\poly_1(s, t)\,ds$$ as follows:
\begin{align} \label{(II)}
& \quad \int^{1-2^{-k}}_{-1+2^{-k}} -\frac{\partial
\gamma_n}{\partial
s}(x-s, y-t)\cdot\Trim_k\poly_1(s, t)\,ds  \nonumber \\
&\quad = -\gamma_n(x-s, y-t)\cdot\Trim_k\poly_1(s, t)\big|
^{1-2^{-k}}_{-1+2^{-k}} \nonumber \\
&\qquad\qquad \qquad +\int^{1-2^{-k}}_{-1+2^{-k}}\gamma_n(x-s, y-t)
\cdot\frac{\partial \Trim_k\poly_1}{\partial s}(s, t)\,ds \nonumber \\
& \quad =  \int^{1-2^{-k}}_{-1+2^{-k}}\gamma_n(x-s, y-t)\cdot\frac{\partial
\Trim_k\poly_1}{\partial s}(s, t)\,ds
\end{align}
Then it follows from (\ref{gamma_k}) and (\ref{(II)})  that for any
$(x, y)\in \Omega$, 
\[ \frac{\partial \gamma_n\ast \Trim_k\poly_1}{\partial x}(x, y)=
\int^{1-2^{-k}}_{-1+2^{-k}}\int^{1-2^{-k}}_{-1+2^{-k}}\gamma_n(x-s,
y-t)\cdot\frac{\partial \Trim_k\poly_1}{\partial s}(s, t)\,ds\,dt \enspace . \]
A similar calculation yields that for any $(x, y)\in\Omega$, 
\[ \frac{\partial \gamma_n\ast \Trim_k\poly_2}{\partial y}(x, y)=
\int^{1-2^{-k}}_{-1+2^{-k}}\int^{1-2^{-k}}_{-1+2^{-k}}\gamma_n(x-s,
y-t)\cdot\frac{\partial \Trim_k\poly_2}{\partial t}(s, t)\,ds\,dt
\enspace . \]
Thus, for any $(x, y)\in\Omega$ and $n\geq k$,
\begin{align*}
\divergence & (\gamma_n\ast \Trim_k\vec\poly)(x, y)  =  \frac{\partial \gamma_n\ast
\Trim_k\poly_1}{\partial
x}(x, y)+ \frac{\partial \gamma_n\ast \Trim_k\poly_2}{\partial y}(x, y) \\
& =
\int^{1-2^{-k}}_{-1+2^{-k}}\int^{1-2^{-k}}_{-1+2^{-k}}\gamma_n(x-s,
y-t)\cdot\left[ \frac{\partial \Trim_k\poly_1}{\partial
s}+\frac{\partial \Trim_k\poly_2}{\partial t}\right] (s, t) \,ds\,dt
\\
& =0
\end{align*}
for $\Trim_k\vec\poly=(\Trim_k\poly_1, \Trim_k\poly_2)$ is
divergence-free on $\Omega_k$. This proves that for any
$\vec\poly\in\SOLPZ[\mathbb{R}^2]$ and $n\geq k$, $\gamma_n\ast
\Trim_k\vec\poly$ is divergence-free on $\Omega$.

\section{Proof of Lemma \ref{l:dense}} \label{a:dense}

Since for each $\vec\poly\in\SOLPZ[\mathbb{R}^2]$ and $k\in\IN$,
$\gamma_n\ast\Trim_k\vec\poly\to \Trim_k\vec\poly$ effectively and
uniformly on $\Omega_k$ as $n\to \infty$, it suffices to show that
$\{\Trim_k\vec\poly:k\in\IN, \vec\poly\in\SOLPZ[\mathbb{R}^2]\}$ is
dense in $\SOLZ{\p}(\Omega)$. On the other hand, since
$\SOLPZ[\mathbb{R}^2]$ is dense in $\SOLZ{\p}(\Omega)$, we only need
to show that for each $\vec\poly\in \SOLPZ[\mathbb{R}^2]$ and
$m\in\IN$, there is a $k\in\IN$ such that
$2^{-m}\geq\|\vec\poly-\Trim_k\vec\poly\|_\infty=
\max\{|\poly_1(\vec x)-\Trim_k\poly_1(\vec x)|,|\poly_2(\vec
x)-\Trim_k\poly_2(\vec x)|:\vec x\in\bar\Omega\}$.

Since $\poly_i$ is uniformly continuous on $\bar\Omega$, there
exists a $k\in\IN$ such that
$|\poly_i(x,y)-\poly_i(x',y')|\leq2^{-m}$ whenever
$|x-x'|,|y-y'|\leq2^{-k+1}$, and, in particular, for
$x'=\frac{x}{1-2^{-k}}$ and $y'=\frac{y}{1-2^{-k}}$. Also, since
$\poly_i(x,y)=0$ for $(x,y)\in\partial\Omega$,
$|\poly_i(x,y)|\leq2^{-m-1}$ for all
$(x,y)\in\Omega\setminus\Omega_k$. This establishes
$|\poly_i(x,y)-\Trim_k\poly_i(x,y)|\leq2^{-m}$ on $\bar\Omega$. 

\end{appendix}

\end{document}